\newtheorem{lemma}{Lemma}[section]
\newtheorem{definition}{Definition}[section]
\newtheorem{proposition}{Proposition}[section]
\newtheorem{theorem}{Theorem}[section]
\newtheorem{remark}{Remark}[section]
\newtheorem{corollary}{Corollary}[section]
\newtheorem{example}{Example}[section]
\newcommand{\A}{\alpha}
\g@addto@macro{\endabstract}{\@setabstract}
\newcommand{\authorfootnotes}
{\renewcommand\thefootnote{\@fnsymbol\c@footnote}}%
\newcommand{\mypictureb}[1][]{
\begin{tikzpicture}[#1]

\def\dist{2.5}
\node at ($(-5.5,1.5)+(15:2.5)$) {$Q:$};

    \draw[fill=black] ($(0,0)+(90:2)$) circle (.08);

    \draw[fill=black] ($(0,0)+(0:2)$) circle (.08);

    \draw[fill=black] ($(0,0)+(45:2)$) circle (.08);
    
    \draw[fill=black] ($(0,0)+(270:2)$) circle (.08);

    \draw[fill=black] ($(0,0)+(225:2)$) circle (.08);
    
    \draw[fill=black] ($(0,0)+(135:2)$) circle (.08);

\draw[->,shorten <=7pt, shorten >=7pt] ($(-.3,4)+(280:2)$) arc (85:140:1.7);
\node at ($(-.4,.2)+(105:2)$) {$\alpha_0$};

\draw[->,shorten <=7pt, shorten >=7pt] ($(0,3)+(228:2.1)$) arc (135:180:2);
\node at ($(-.8,2.5)+(228.5:2.1)$) {$\alpha_1$};

\draw[->,shorten <=7pt, shorten >=7pt] ($(.5,.4)+(300:2)$) arc (325:355:2.8);
\node at ($(.1,1.7)+(338.5:2.5)$) {$\alpha_{k-1}$};
  
\draw[->,shorten <=7pt, shorten >=7pt] ($(-.35,0)+(280:2)$) arc (270:310:2.1);
\node at ($(.1,4.3)+(315-22.5:2.5)$) {$\alpha_k$};

\draw[->,shorten <=7pt] ($(1,-.6)+(78:2)$) arc (43:80:2.2);
\draw[->,shorten >=7pt] ($(.3,1.3)+(328:2)$) arc (5:50:1.7);
\draw[->,shorten <=7pt] ($(-1.9,2)+(270:2)$) arc (170:210:1.7);
\draw[->,shorten >=7pt] ($(.5,-.5)+(215:2)$) arc (240:270:2.3);
\node at ($(1.7,.2)+(270-20:2.5)$) {$\alpha_i$};
\node at ($(.15,-1.9)+(25:2.5)$) {$\alpha_{k-2}$};
\node at ($(-4.6,-1.5)+(20:2.5)$) {$\alpha_{i-2}\ $};
\node at ($(-1.9,.3)+(315-25:2.5)$) {$\ \alpha_{i-1}$};

\foreach \ang in {310,315,320,176,179.5,183}{
  \draw[fill=black] ($(0,0)+(\ang:2)$) circle (.02);
}
\end{tikzpicture}
}
\tikzset{my loop/.style =  {to path={
  \pgfextra{}
  [looseness=12,min distance=10mm]
  \tikz@to@curve@path},font=\sffamily\small
  }}  
\subjclass[2020]{16G20, 05E10.}
\keywords{Monomial algebras, Special multiserial algebras, UMP algebras, Gorenstein Projective modules}
\begin{document}



\title{UMP Monomial algebras: Combinatorial and Homological Consequences}

\author[Caranguay-Mainguez]{Jhony Caranguay-Mainguez}
\address{Universidad de Antioquia, Instituto de Matem\'aticas}
\email{jhony.caranguay@udea.edu.co}

\author[Franco]{Andr\'es Franco}
\address{Universidad de Antioquia, Instituto de Matem\'aticas}
\email{andres.francol@udea.edu.co}

\author[Reynoso-Mercado]{David Reynoso-Mercado}
\address{Universidad de Antioquia, Instituto de Matem\'aticas}
\email{david.reynoso@udea.edu.co}

\author[Rizzo]{Pedro Rizzo}
\address{Universidad de Antioquia, Instituto de Matem\'aticas}
\email{pedro.hernandez@udea.edu.co}






\maketitle



\begin{abstract}
In this paper, we apply the techniques developed in \cite{CFR} to present several consequences of studying UMP algebras and the ramifications graph of a monomial bound quiver algebra. Specifically, we prove that every weakly connected component of the ramifications graph of a UMP monomial algebra is unilaterally connected. Furthermore, using the main result characterizing UMP algebras in the monomial context, we prove that the class of UMP algebras is equivalent to the class of special multiserial algebras when the algebra is a quadratic monomial algebra. Based on this equivalence and the classification of Chen-Shen-Zhou on Gorenstein projective modules in \cite{chen2018gorenstein},  we extend their results to the class of monomial special multiserial  UMP algebras, where we use the analysis of homological properties on quadratic monomial algebras given by these authors.

\end{abstract}


\vskip5pt
\noindent
 
\vskip5pt
\noindent


\section{Introduction}
Gentle algebras (\cite{AS}), Almost Gentle algebras (\cite{Gr-Sc}), SAG and SUMP algebras (\cite{FGR1}) are examples of classes of bound quiver algebras satisfying the Unique Maximal Path property, briefly UMP algebras. That is, a UMP algebra is a bound quiver algebra such that two different maximal paths have no common arrows (are disjoint). The class of UMP algebras are introduced in \cite{CFR} in response to the problem proposed in \cite{FGR1} about the characterization of SUMP algebras. The authors in \cite{FGR1} def\mbox ine SUMP algebras as bound quiver algebras verifying a very special condition called the Unique Maximal Path property.
A complete classification is presented in \cite{CFR} for special multiserial and \emph{locally monomial} algebras (see Section 2 below or \cite{CFR} for the definition). In the context of bound quiver algebras, the Unique Maximal Path property appears to be crucial for the study of derived equivalences (see e.g. \cite{AG}, \cite{WX}). This property, along with those introduced above for certain derived categories, plays a significant role in classification problems of certain objects in important categories, as demonstrated in \cite{FM}, \cite{GRV} and \cite{Gr-Sc}.

In this paper, we present a collection of examples and non-examples of UMP algebras and explore the properties and implications of the combinatorial tools introduced in \cite{CFR} for a monomial bound quiver algebra $A$.
Indeed, we establish interesting connections between the mentioned tools and key combinatorial aspects of monomial bound quiver algebras. In this specific case, the set of \emph{$\omega$-relations} (see \cite[Def. 4.4]{CFR}) coincides with the minimal set of relations of the algebra (see \cite[Remark 4.2\,$iii)$]{CFR}). Consequently, the main classification result for UMP algebras is presented in a simplified and manageable version (see Section \ref{sec:UMPclass}), which directly impacts the study of their characterizations and homological properties. More precisely, by applying the techniques from \cite{CFR} and its main classification theorem for monomial algebras (see Theorem \ref{thm:UMPmain}), we characterize self-injective Nakayama algebras that are also UMP algebras. Furthermore, we prove that special multiserial algebras coincide with UMP algebras in the case of quadratic monomial algebras. This result motivated us to study the category of Gorenstein projective modules over monomial special multiserial UMP algebras. Specifically, the tools and results developed by Chen-Shen-Zhou in \cite{chen2018gorenstein} regarding Gorenstein projective modules for monomial algebras have a significant impact when applied to quadratic monomial algebras. For instance, they derive important classification results for various homological properties, such as a generalization of the Geiss-Reiten Theorem for gentle algebras (see \cite{GR}), using combinatorial expressions linked to the relations. Given that, in the quadratic monomial case, special multiserial algebras coincide with UMP algebras, and that Theorem \ref{thm:UMPmain} provides a complete characterization of this class in terms of relations, it is natural to extend the homological results from \cite{chen2018gorenstein} to the monomial special multiserial UMP case, as presented in Section \ref{sec:GprojUMP}.


This article is organized as follows. In Section 2, we begin by fixing some notations and we present some background material about bound quiver algebras and UMP algebras. Then, we state some results on UMP monomial algebras. Later, we introduce some examples showing the tools and techniques introduced in \cite{CFR}, finalizing with a comparative table exhibiting some properties of each example presented. In the last subsection of Section 2 we analyze some properties on the bound quiver algebras derived from its ramifications graph and vice versa. We will introduce here a new class of bound quiver algebras satisfying the property of the \emph{fully connected components} (see Definition \ref{def:fullyconn}), which contains the class of locally monomial and special multiserial algebras as well as the class of monomial and UMP algebras. In Section 3, we present the classification of some classes of UMP algebras which are relevant in representation theory of algebras such as Nakayama algebras and quadratic monomial algebras. Finally, in Section 4, we introduce the necessary preliminaries and tools to study the category of Gorenstein projective modules for monomial special multiserial UMP algebras, extending some classification results presented in \cite{chen2018gorenstein} for quadratic monomial algebras. Specifically, we provide a classification of perfect paths on UMP algebras (Theorem \ref{perfectarrows}) and, by generalizing the \emph{quiver of relations} from \cite{chen2018gorenstein} (Definition \ref{def:qrel}), we extend the main homological results to our context of UMP algebras (Theorem \ref{lema5.3}, Propositions \ref{prop:homoUMP1} and \ref{prop:homoUMP2}). We conclude this section with several examples demonstrating the application of these new tools and compare them with their counterparts in \cite{chen2018gorenstein}.


\section{UMP algebras: the monomial case}\label{Sec:UMP algebras}
For convenience of the reader we recall here some introductory material about bound quiver algebras and the preliminary material about UMP algebras introduced in \cite{CFR}. 

Let $A=\Bbbk Q/I$ be a finite-dimensional algebra over an algebraically closed field $\Bbbk$, where $I$ is an admissible ideal of $\Bbbk Q$ and $Q$ is a finite and connected quiver. Let $Q_0$ (resp. by $Q_1$) be the set of vertices (resp. the set of arrows) of $Q$. Also, $s(\alpha)$ (resp. $t(\alpha)$) denotes the vertex of $Q_0$ where the arrow $\alpha$ starts (resp. ends). We denote by $R$ a minimal set of relations such that $I=\langle R \rangle$. 

Each trivial path at vertex $i\in Q_0$ is denoted by $e_i$ and $\mathcal{P}(Q)$ denotes the set of all paths in $Q$. If $u,v\in \mathcal{P}(Q)$, we say that \emph{$u$ divides $v$} (or \emph{$u$ is a divisor of $v$}, or \emph{$u$ is a factor of $v$}, or \emph{$v$ factors through $u$}) if and only if $u$ is a subpath of $v$. We denote this relation by $u\mid v$. We also say that two paths are \textit{disjoint} if they have no common non-trivial divisors.

Let $\mathfrak{m}\in\mathcal{P}(Q)$. We say that $\mathfrak{m}+I$ is a \textit{maximal path} of $A=\Bbbk Q/I$ if $\mathfrak{m}\notin I$ and for every arrow $\alpha\in Q_1$ we have $\alpha \mathfrak{m}\in I$ and $\mathfrak{m}\alpha\in I$. We denote by $\mathcal{M}$ the set of maximal paths of $A$. We also say that two maximal paths $\mathfrak{m}+I$ and $\mathfrak{m}'+I$ are disjoint if for every pair of representatives $w$ of $\mathfrak{m}+I$ and $w'$ of $\mathfrak{m}'+I$, with $w,w'\in\mathcal{P}(Q)$, we have that $w$ and $w'$ are disjoint paths.

Given $w\in\mathcal{P}(Q)$, we write $w=w^0w^1\cdots w^{l_w}$ for the factorization of $w$ in terms of arrows $w^0,w^1,\dots, w^{l_w}\in Q_1$. In this case, we set $s(w):=s(w^0)$ and $t(w):=t(w^{l_w})$. Notice that the length of $w$ is $l_w+1$. 
A path is said to be \textit{repetition-free} if it has no repeated arrows as factors. 

An important tool introduced in \cite{CFR} is the \emph{ramifications graph}. Before defining it, however, we need to introduce the following prerequisite concept (see \cite[Definition 3.2]{CFR}). For each arrow $a\in Q_1$ we def\mbox ine a path $\omega_a$ in $\mathcal{P}(Q)$ as follows.
\begin{enumerate}[$i)$]
\item If $Q$ is a cyclic quiver, we f\mbox ix a repetition-free path of the form $\alpha_0\cdots \alpha_k$ in $\mathcal{P}(Q)$, where $\alpha_0,\ldots,\alpha_k$ are all the arrows of $Q$ (see e.g. Figure \ref{cycqu}). For each arrow $a\in Q_1$ we def\mbox ine $\omega_a:=\alpha_0\cdots \alpha_k$.
\item \item If $Q$ is not a cyclic quiver, we define $\omega_a$ in the following fashion: We define $\Phi_a$ as the set of all paths $\omega$ satisfying the following conditions.
\begin{enumerate}[$a)$]
\item $a$ is a subpath of $\omega$.
\item If $l_{\omega}>0$, then it holds that
\begin{equation*}\label{ineqs}
    |\{\alpha\in Q_1\mid s(\alpha)=i\}|= 1 \ \text{  and } \ |\{\alpha\in Q_1 \mid t(\alpha)=i\}|= 1
\end{equation*}
for every vertex $i\in\{t(\omega^j):0\leq j<l_{\omega}\}$.
\end{enumerate}

Then, we define $\omega_a$ as the path of maximal length in $\Phi_a$.
\end{enumerate}

\begin{figure}[h!]
\mypictureb[baseline=-22mm]
\caption{A cyclic quiver $Q$.}\label{cycqu}
\end{figure}

Roughly speaking, $\omega_a$ is the longest repetition-free path in $Q$ that contains the arrow $a$ and has no ramification vertices (vertices with at least two incoming or outgoing arrows). Now, following the notations of Definition 3.3 in \cite{CFR}, $G_{Q,I}:=(V,E)$ denotes the (oriented) \emph{ramifications graph associated to $(Q,I)$}, where the set of vertices is $V=\{\omega_a\mid a\in Q_1\}$ and for any pair $a,b\in Q_1$, there exists a directed edge $\delta\in E$ from $\omega_a$ to $\omega_b$ if and only if $\omega_a\neq\omega_b$, $t(\omega_a)=s(\omega_b)$ and $\omega_a^{l_a}\omega_b^{0}\notin I$. The set of \emph{weakly connected components of $G_{Q,I}$} is denoted by $\mathcal{D}_{Q,I}$, where a {\it weakly connected component} of $G_{Q,I}$ is a subgraph whose underlying undirected graph is a connected component of $G_{Q,I}$. Here, the underlying undirected graph is the graph obtained by ignoring the orientations of edges in $G_{Q,I}$.

For each $N\in \mathcal{D}_{Q,I}$, we denote by $Q_N$ the subquiver of $Q=(Q_0,Q_1,s,t)$ defined by the paths $\omega_a$ which are vertices in the component $N$. More precisely, $Q_N=((Q_N)_0,(Q_N)_1,s_N,t_N)$ where the set of arrows is defined by
$$
(Q_N)_1:=\{\alpha\in Q_1\mid \omega_{\alpha}\,\text{is a vertex in}\,N\},
$$
the set of vertices is defined by
$$
(Q_N)_0:=\{i\in Q_0\mid i\in\{s(\alpha),t(\alpha)\}\,\text{for some}\, \alpha \in (Q_N)_1\},
$$
and $s_N:=s|_{(Q_N)_1}$ and $t_N:=t|_{(Q_N)_1}$. From the subalgebra $\Bbbk Q_N$ of $\Bbbk Q$, we define $I_N$ as the induced ideal $I\cap\Bbbk Q_N$ of $\Bbbk Q_N$. With this, we define $A_N$ as the algebra $\Bbbk Q_N/I_N$ and $\mathcal{M}_N$ as the set of maximal paths in $A_N$. An important result, proved in \cite[Proposition 7]{CFR}, is that $I_N$ is an admissible ideal of $\Bbbk Q_N$, for all $N\in\mathcal{D}_{Q,I}$. In addition, it is not difficult to prove that if $A=\Bbbk Q/I$ is a monomial algebra, with $R$ a minimal set of zero relations generating $I$, then $A_N=\Bbbk Q_N/I_N$ is a monomial algebra for all $N\in\mathcal{D}_{Q,I}$, where $R_N:=R\cap\mathcal{P}(Q_N)$ is a minimal set of zero relations which generates $I_N$.

According to Remark 10 $i)$ in \cite{CFR}, if $u+I$ is a path in $A$, there exists a unique weakly connected component of $G_{Q,I}$, denoted by $N(u)$, such that $u\in \mathcal{P}(Q_{N(u)})$.

From \cite[Definition 15]{CFR}, a bound quiver algebra $A=\Bbbk Q/I$ is called a \emph{locally monomial} algebra if $A_N=\Bbbk Q_N/I_N$ is a monomial algebra for all $N\in\mathcal{D}_{Q,I}$. As pointed out above, the class of monomial algebras is contained in the class of locally monomial algebras.

Following \cite[Remark 10]{CFR}, for each $N\in\mathcal{D}_{Q,I}$ we can define a function $f_N:\mathcal{M}_N\rightarrow A$ by $\mathfrak{m}+I_N\mapsto \mathfrak{m}+I$. Also, we can define an equivalence relation ``$\sim_{\omega}$'' on the set of arrows $Q_1$ by $a\sim_{\omega} b$ if and only if $\omega_a + I=\omega_b + I$, for any $a,b\in Q_1$. Under these notations, we will prove a ``monomial'' version of Theorem 11 in \cite{CFR}.

\begin{proposition}\label{prop:maxUMP}
Let $A=\Bbbk Q/I$ be a bound quiver algebra and let $N\in\mathcal{D}_{Q,I}$. Then, the following statements hold.
\begin{enumerate}
\item For each $N\in\mathcal{D}_{Q,I}$, the function $f_N:\mathcal{M}_N\rightarrow A$ is injective.
\item If $A$ is a monomial algebra, then $\mathcal{M}=\bigsqcup\limits_{N\in\mathcal{D}_{Q,I}}f_N(\mathcal{M}_N)$.
\end{enumerate}
\end{proposition}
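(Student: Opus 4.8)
The plan is to deduce everything from three facts recalled in the text: the identity $I_N=I\cap\Bbbk Q_N$, the description $(Q_N)_1=\{\alpha\in Q_1 : \omega_\alpha\text{ is a vertex of }N\}$, and the fact from Remark 10\,$i)$ of \cite{CFR} that a path $u\notin I$ lies in $\mathcal{P}(Q_{N(u)})$ for a unique $N(u)\in\mathcal{D}_{Q,I}$. Part (1) is then essentially formal, since $f_N$ is the map induced by the inclusion $\Bbbk Q_N\hookrightarrow\Bbbk Q$: if $f_N(\mathfrak{m}+I_N)=f_N(\mathfrak{m}'+I_N)$ for maximal paths $\mathfrak{m}+I_N,\mathfrak{m}'+I_N$ of $A_N$, then $\mathfrak{m}-\mathfrak{m}'\in I$, and since $\mathfrak{m},\mathfrak{m}'\in\mathcal{P}(Q_N)\subseteq\Bbbk Q_N$ this gives $\mathfrak{m}-\mathfrak{m}'\in I\cap\Bbbk Q_N=I_N$, that is, $\mathfrak{m}+I_N=\mathfrak{m}'+I_N$.

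For part (2) the first step is a purely combinatorial observation: the family $\{(Q_N)_1\}_{N\in\mathcal{D}_{Q,I}}$ partitions $Q_1$, because every arrow $a$ determines the vertex $\omega_a$ of $G_{Q,I}$, which lies in exactly one weakly connected component, and $a\in(Q_N)_1$ holds precisely for that component. Combining this with Remark 10 of \cite{CFR} yields the statement I will use repeatedly: every arrow occurring in a path $u\notin I$ belongs to $(Q_{N(u)})_1$; in particular, if a path $u\notin I$ uses some arrow of $Q_N$, then $N(u)=N$.

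Granting this, I would establish $\mathcal{M}=\bigsqcup_{N}f_N(\mathcal{M}_N)$ via three claims. First, $f_N(\mathcal{M}_N)\subseteq\mathcal{M}$: given $\mathfrak{m}+I_N\in\mathcal{M}_N$ we have $\mathfrak{m}\notin I$, and $\mathfrak{m}$ is non-trivial since $I_N$ is admissible and every vertex of $Q_N$ is an endpoint of an arrow of $Q_N$; for an arrow $\alpha$ with $s(\alpha)=t(\mathfrak{m})$, either $\alpha\in(Q_N)_1$ and then $\mathfrak{m}\alpha\in I_N\subseteq I$ by maximality in $A_N$, or $\alpha\notin(Q_N)_1$, in which case $\mathfrak{m}\alpha\notin I$ is impossible, since then the last arrow of $\mathfrak{m}$ would lie in $(Q_N)_1\cap(Q_{N(\mathfrak{m}\alpha)})_1$, forcing $N(\mathfrak{m}\alpha)=N$ and hence $\alpha\in(Q_N)_1$; the symmetric argument on the left shows $\mathfrak{m}+I\in\mathcal{M}$. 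Second, the sets $f_N(\mathcal{M}_N)$ are pairwise disjoint, and here monomiality enters: if $\mathfrak{m}+I\in f_N(\mathcal{M}_N)\cap f_{N'}(\mathcal{M}_{N'})$, choose path representatives $\mathfrak{m}_1\in\mathcal{P}(Q_N)$ and $\mathfrak{m}_2\in\mathcal{P}(Q_{N'})$, both outside $I$, with $\mathfrak{m}_1+I=\mathfrak{m}+I=\mathfrak{m}_2+I$; since in a monomial algebra the paths not lying in $I$ are $\Bbbk$-linearly independent, $\mathfrak{m}_1=\mathfrak{m}_2$ is a single path lying in both $Q_N$ and $Q_{N'}$, so $N=N'$ by the partition. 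Third, $\mathcal{M}\subseteq\bigcup_N f_N(\mathcal{M}_N)$: for $\mathfrak{w}+I\in\mathcal{M}$ set $N=N(\mathfrak{w})$, so $\mathfrak{w}\in\mathcal{P}(Q_N)$ and $\mathfrak{w}\notin I_N$; for every arrow $\alpha$ of $Q_N$ composable with $\mathfrak{w}$, maximality of $\mathfrak{w}+I$ in $A$ gives $\mathfrak{w}\alpha\in I$, hence $\mathfrak{w}\alpha\in I\cap\Bbbk Q_N=I_N$, and similarly $\alpha\mathfrak{w}\in I_N$, so $\mathfrak{w}+I_N\in\mathcal{M}_N$ and $f_N(\mathfrak{w}+I_N)=\mathfrak{w}+I$.

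I expect the main obstacle to be the first claim, $f_N(\mathcal{M}_N)\subseteq\mathcal{M}$: one must rule out that right or left multiplication of a maximal path of $A_N$ by an arrow \emph{not} in $Q_N$ produces a non-zero element of $A$. This is exactly where the disjointness of the arrow sets of distinct components, together with the confinement of a non-zero path to a single component, is needed; the monomial hypothesis makes both transparent through Remark 10 of \cite{CFR} and through the fact, recalled in the text, that $R_N=R\cap\mathcal{P}(Q_N)$ is a minimal set of relations generating $I_N$. Once the first claim is in place, the disjointness and covering claims are routine manipulations with the identity $I_N=I\cap\Bbbk Q_N$ and the basis of non-zero paths.
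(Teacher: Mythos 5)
Your proof is correct and, on the two points the paper actually argues---injectivity of $f_N$ via $I_N=I\cap\Bbbk Q_N$, and disjointness of the union via the fact that in a monomial algebra two non-zero path representatives congruent modulo $I$ must be equal as paths and hence determine the same weakly connected component---it coincides with the paper's proof. The only difference is that the paper simply cites \cite[Theorem 11]{CFR} for the equality $\mathcal{M}=\bigcup_{N}f_N(\mathcal{M}_N)$, whereas you reprove both inclusions from the partition of $Q_1$ by the sets $(Q_N)_1$ together with Remark 10 of \cite{CFR}; this makes your argument more self-contained but is not a different method.
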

\begin{proof}
\begin{enumerate}
\item Suppose that there exist maximal paths $\mathfrak{m}+I_N$ and $\mathfrak{n}+I_N$ in $\mathcal{M}_N$, with $\mathfrak{m},\mathfrak{n}\in \mathcal{P}(Q_N)$, such that $f_N(\mathfrak{m}+I_N)=f_N(\mathfrak{n}+I_N)$. Then, $\mathfrak{m}+I=\mathfrak{n}+I$ and hence, $\mathfrak{m}-\mathfrak{n}\in I$. Also, since $\mathfrak{m}-\mathfrak{n}\in \Bbbk Q_N$ and $I_N=I\cap \Bbbk Q_N$, this implies that $\mathfrak{m}+I_N=\mathfrak{n}+I_N$.
\item In \cite[Theorem 11]{CFR}, it is proved that $\mathcal{M}=\bigcup_{N\in \mathcal{D}_{Q,I}}f_N(\mathcal{M}_N)$. Now, we will prove that this union is disjoint. Suppose that there are at least two weakly connected components $M$ and $N$ of $G_{Q,I}$ such that $f_N(\mathcal{M}_M) \cap f_N(\mathcal{M}_N) \neq \emptyset$. Then, there are paths $\mathfrak{m}$ in $\mathcal{P}(Q_M)$ and $\mathfrak{n}$ in $\mathcal{P}(Q_N)$ such that $\mathfrak{m}+I_M$ and $\mathfrak{n}+I_N$  are maximal paths in $A_M$ in $A_N$, respectively, and $\mathfrak{m}-\mathfrak{n}\in I$. Since $A$ is monomial algebra, then $I$ is generated by zero relations and hence, we have that $\mathfrak{m}=\mathfrak{n}$. So, $M=N(\mathfrak{m})=N(\mathfrak{n})=N$.

\end{enumerate}
\end{proof}

As an immediate consequence of Proposition \ref{prop:maxUMP}, we recover the UMP classification in the ``local case'', which is a special instance of the broader result presented in \cite{CFR}.
\begin{corollary}\label{cor:local1}
Let $A=\Bbbk Q/I$ be a monomial algebra. Then, 
\begin{enumerate}
\item  $A$ is an UMP algebra if and only if $A_N$ is a UMP algebra, for each $N\in \mathcal{D}_{Q,I}$.
\item $|\mathcal{M}|=\sum\limits_{N\in \mathcal{D}_{Q,I}}|\mathcal{M}_N|$.
\end{enumerate}

\end{corollary}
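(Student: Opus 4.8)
The plan is to read off both assertions directly from Proposition~\ref{prop:maxUMP}, so that essentially no new argument is required beyond a short case analysis. For item~(2), Proposition~\ref{prop:maxUMP}(2) gives the disjoint decomposition $\mathcal{M}=\bigsqcup_{N\in\mathcal{D}_{Q,I}}f_N(\mathcal{M}_N)$, hence $|\mathcal{M}|=\sum_{N\in\mathcal{D}_{Q,I}}|f_N(\mathcal{M}_N)|$; since each $f_N$ is injective by Proposition~\ref{prop:maxUMP}(1), we have $|f_N(\mathcal{M}_N)|=|\mathcal{M}_N|$, and the formula follows at once.

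For the forward implication of item~(1), I would assume $A$ is UMP and fix $N\in\mathcal{D}_{Q,I}$. Given two distinct maximal paths $\mathfrak{m}+I_N,\mathfrak{n}+I_N\in\mathcal{M}_N$, injectivity of $f_N$ yields distinct elements $\mathfrak{m}+I\neq\mathfrak{n}+I$ of $A$, which are maximal paths of $A$ by \cite[Theorem~11]{CFR}. Because $A$ is monomial, a maximal path has a unique path representative not lying in $I$, so the UMP hypothesis on $A$ says precisely that $\mathfrak{m}$ and $\mathfrak{n}$ share no non-trivial divisor in $\mathcal{P}(Q)$. Since $\mathfrak{m},\mathfrak{n}\in\mathcal{P}(Q_N)\subseteq\mathcal{P}(Q)$ and divisibility is inherited by the subquiver, $\mathfrak{m}$ and $\mathfrak{n}$ are disjoint as paths of $Q_N$, and hence $A_N$ is UMP.

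For the converse, I would assume every $A_N$ is UMP and take two distinct maximal paths $x,y\in\mathcal{M}$. By Proposition~\ref{prop:maxUMP}(2), each of $x,y$ lies in exactly one summand $f_N(\mathcal{M}_N)$. If both lie in the same $f_N(\mathcal{M}_N)$, write $x=f_N(\mathfrak{m}+I_N)$ and $y=f_N(\mathfrak{n}+I_N)$; since $x\neq y$, injectivity of $f_N$ forces $\mathfrak{m}+I_N\neq\mathfrak{n}+I_N$, and the UMP property of $A_N$ makes $\mathfrak{m}$ and $\mathfrak{n}$ disjoint, hence $x$ and $y$ disjoint in $A$. If instead $x\in f_M(\mathcal{M}_M)$ and $y\in f_N(\mathcal{M}_N)$ with $M\neq N$, then a common arrow $\alpha$ of their representatives would lie in $(Q_M)_1\cap(Q_N)_1$, so $\omega_\alpha$ would be a vertex of both components $M$ and $N$, contradicting the fact that the weakly connected components of $G_{Q,I}$ partition its vertex set. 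In either case $x$ and $y$ are disjoint, so $A$ is UMP.

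I do not expect any genuine obstacle here: the statement is pure bookkeeping on top of Proposition~\ref{prop:maxUMP}. The only mildly delicate point is aligning ``disjointness in $Q$'' with ``disjointness in $Q_N$'' for paths that already live in the subquiver, which is immediate because $\mathcal{P}(Q_N)$ sits inside $\mathcal{P}(Q)$ with the same divisibility relation, together with the standard observation that in a monomial algebra a maximal path has an essentially unique path representative.
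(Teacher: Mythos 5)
Your proof is correct and follows essentially the same route as the paper, which states Corollary~\ref{cor:local1} as an immediate consequence of Proposition~\ref{prop:maxUMP} without spelling out details; your case analysis (same component via UMP of $A_N$, different components via the partition of $Q_1$ by the $\omega_\alpha$'s) correctly fills in exactly what the paper leaves implicit.
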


\begin{example}\label{ex1}
Consider the quiver $Q$ given by
$$
\xymatrix{
1 \ar[r]^f & 2 \ar[d]_b & \ar[l]_a 3 \ar[r]^d & 4 \ar[r]^e & 5\\
           &  6 \ar[ru]_c &                    &            &
}
$$
bounded by the set of relations $R=\{cabca, de, fb\}$. Then, we have $\mathcal{M}=\{f, abcabcd,e\}$ and $\omega_f=f,\ \omega_a=a,\ \omega_b=bc, \ \omega_d=de$. It follows that the ramifications graph $G_{Q,I}$ is composed by two weakly connected components,  $N: \omega_f$ and $N':\xymatrix{
\omega_a \ar[r] & \omega_b \ar[r] & \omega_d }$ and thus
\begin{figure}[!htb]
   \begin{minipage}{0.46\textwidth}
\centering
 $ Q_{N}:=\,\xymatrix{
1 \ar[r]^f & 2}$\\
\bigskip
$I_{N}=\langle0\rangle$,\ $\mathcal{M}_N=\{f\}$
   \end{minipage}\hfill
   \begin{minipage}{0.52\textwidth}
 \centering
$Q_{N'}:=\,\xymatrix{
2 \ar[d]_b  & \ar[l]_a 3 \ar[r]^d & 4 \ar[r]^e & 5\\
6 \ar[ru]_c &                    &            &}$\\
\bigskip
$I_{N'}=\langle cabca,de\rangle$,\ $\mathcal{M}_{N'}=\{abcabcd,e\}$
\end{minipage}
\end{figure}
\end{example}

\begin{example}\label{ex2}
Let $Q$ be the quiver given by
$$
\xymatrix{
1 \ar@(ul,dl)[]_{a} \ar@/^1pc/[rr]^b
&& 2 \ar@/^1pc/[ll]^{c}}
$$
bounded by the set of relations $R=\{ab, ca, a^2-bc\}$. Thus, we have $\mathcal{M}=\{a^2(=bc),cb\}$ and $\omega_a=a$,  $\omega_b=\omega_c=bc$. Denoting by $N$ and $N'$ the weakly connected components defined by the paths $\omega_a$ and $\omega_b$, respectively, we have that: $N:\ \omega_a$, $N':\ \omega_b$ and
\begin{figure}[!htb]
   \begin{minipage}{0.48\textwidth}
\centering
 $ Q_{N}:=\,\xymatrix{
1 \ar@(ul,dl)[]_{a}}$\\
\bigskip
$I_{N}=\langle a^3\rangle$,\ $\mathcal{M}_{N}=\{a^2\}$
   \end{minipage}\hfill
   \begin{minipage}{0.48\textwidth}
 \centering
 $Q_{N'}:=\,\xymatrix{
1 \ar@/^1pc/[rr]^b
&& 2 \ar@/^1pc/[ll]^{c}}$\\
\bigskip
$I_{N'}=\langle bcb, cbc\rangle$,\ $\mathcal{M}_{N'}=\{bc,cb\}$
\end{minipage}
\end{figure}
\end{example}

\begin{example}\label{ex3}
Consider the quiver $Q$ given by
$$
Q:\ \xymatrix{ & & & 4 \ar[dr]^f & \\
1 \ar@(l,u)[]^{a} \ar@(l,d)[]_{b} \ar[r]^c & 2\ar[r]^{d} & 3 \ar[ur]^e \ar[dr]_g & & 6\\
 & & & 5 \ar[ur]_h &}
$$
\noindent
bounded by the set of relations $R=\{ab-ba, ef-gh, a^2, b^2, ac, bc, de\}$. Therefore, we obtain that $\mathcal{M}=\{ab(=ba), cdg,ef(=gh)\}$ and $\omega_a=a$,  $\omega_b=b$, $\omega_c=cd$, $\omega_e=ef$, $\omega_g=gh$. Denoting by $N$, $N'$ and $N''$ the weakly connected components we have that: $N:\,\xymatrix{
\omega_c \ar[r]
& \omega_g}$, $N':\,\omega_e$, $N'':\,\xymatrix{
\omega_a \ar@/^/[rr]
&& \omega_b \ar@/^/[ll]}$. 

\bigskip

The quiver, the ideal and the maximal paths in each case are given by
\medskip 

\begin{figure}[!htb]
\begin{minipage}{0.48\textwidth}
 \centering
 $Q_{N}:=\,\xymatrix{
1 \ar[r]^c
& 2 \ar[r]^{d} & 3 \ar[r]^{g}& 5 \ar[r]^{h} & 6}$\\
\bigskip
$I_{N}=\langle dgh\rangle$,\ $\mathcal{M}_{N}=\{cdg,gh\}$
\end{minipage}\hfill
   \begin{minipage}{0.48\textwidth}
 \centering
 $Q_{N'}:=\,\xymatrix{
3 \ar[r]^{e}& 4 \ar[r]^{f} & 6}$\\
\bigskip
$I_{N'}=\langle 0\rangle$,\ $\mathcal{M}_{N'}=\{ef\}$
\end{minipage}
\end{figure}

\begin{figure}[!htb]
\centering
 $ Q_{N''}:=\xymatrix{
1 \ar@(l,u)[]^{a} \ar@(l,d)[]_{b}
}$\\
\bigskip
$I_{N''}=\langle ab-ba,a^2,b^2\rangle$,\ $\mathcal{M}_{N''}=\{ab\}$
   \end{figure}

\end{example}

\begin{example}\label{ex4}
Consider the quiver $Q$ given by
$$
\xymatrix{& & 2\ar[dr]^{b} & & \\ 
Q: &1\ar[rd]_c \ar[ur]^{a}&  &4 \ar[rr]_{e} & & 5 \\ & & 3 \ar[ur]_{d} & & }
$$
bounded by the set of relations $R=\{be,de,ab-cd\}$. In this case, we obtain that $\omega_a=\omega_b=ab$, $\omega_c=\omega_d=cd$ and $\omega_e=e$. Denoting by $N$, $N'$ and $N''$ the weakly connected components we have:
\begin{figure}[!htb]
\begin{minipage}{0.48\textwidth}
 \centering
 $Q_{N}:=\,\xymatrix{
1 \ar[r]^c
& 3 \ar[r]^{d} & 4 }$\\
\bigskip
$I_{N}=\langle 0\rangle$
\end{minipage}\hfill
   \begin{minipage}{0.48\textwidth}
 \centering
  $Q_{N'}:=\,\xymatrix{
1 \ar[r]^a
& 2 \ar[r]^{b} & 4 }$\\
\bigskip
$I_{N'}=\langle 0\rangle$
\end{minipage}
\end{figure}

\begin{figure}[!htb]
\centering
  $Q_{N''}:=\,\xymatrix{
4 \ar[r]^e & 5 }$\\
\bigskip
$I_{N''}=\langle 0\rangle$
   \end{figure}

\end{example}


In the following table we present some properties for each algebra corresponding to the previous examples. Recall that, the bound quiver algebra $A=\mathbb{K}Q/I$ is a \emph{special multiserial algebra} if the following property holds: for every arrow $\alpha$ in $Q_1$ there is at most one arrow $\beta$ in $Q_1$ such that $\alpha\beta\notin I$ and at most one arrow $\gamma$ in $Q_1$ such that $\gamma\alpha\notin I$ (\cite[Definition 2.2]{GS}).

\begin{table}[ht]
    \centering
        \begin{tabular}{|c|c|c|c|c|}
            \hline 
            Example\textbackslash Properties & Special Multiserial & Monomial & Locally Monomial & UMP\\
            \hline 
            \ref{ex1} & $\times$ & \checkmark & \checkmark & \checkmark\\
            \hline 
            \ref{ex2} & \checkmark  & $\times$ & \checkmark & $\times$\\
            \hline 
            \ref{ex3} & \checkmark & $\times$& $\times$ & $\times$\\
            \hline 
           
   \ref{ex4} & \checkmark & $\times$& \checkmark  & \checkmark \\
            \hline 
        \end{tabular}
\end{table}

\subsection{Algebras whose ramifications graph is unilaterally connected}
A monomial algebra $A=\Bbbk Q/I$ which is also a UMP algebra, satisfies an important property related with the ramifications graph $G_{Q,I}$ associated to $A$, as stated in the following proposition. In this article, we will say that an oriented graph $G=(V,E)$ is \emph{unilaterally connected} if for any pair of vertices $i,j\in V$, it contains an oriented path from $i$ to $j$ or from $j$ to $i$.
\begin{theorem}\label{thm:fullyconn}
Let $A=\Bbbk Q/I$ be a monomial algebra. If $A$ is a UMP algebra, then every weakly connected component of $G_{Q,I}$ is unilaterally connected.
\end{theorem}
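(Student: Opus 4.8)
The plan is to fix a weakly connected component $N\in\mathcal{D}_{Q,I}$ and two arbitrary vertices $\omega_a,\omega_b$ of $N$, and produce an oriented path between them in one direction or the other. Since $N$ is weakly connected, there is an undirected path $\omega_a=\omega_{c_0},\omega_{c_1},\dots,\omega_{c_n}=\omega_b$ in $N$, where consecutive terms are joined by an edge of $G_{Q,I}$ in some orientation. The heart of the matter is to show that along such an undirected path the orientations cannot ``reverse'': one cannot have a vertex $\omega_{c_j}$ that is simultaneously the target of the edge from $\omega_{c_{j-1}}$ and the target of the edge from $\omega_{c_{j+1}}$ (a ``sink'' in the path), nor a ``source'' configuration. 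Ruling out both local patterns forces the whole undirected path to be coherently oriented, i.e. to be an oriented path in one direction, which is exactly unilateral connectivity.

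So the key step is the following claim: in a UMP monomial algebra, no vertex of $G_{Q,I}$ is an internal sink or source of a length-two undirected subpath of a weakly connected component. I would prove the ``sink'' case (the ``source'' case is dual). Suppose $\omega_x\to\omega_z$ and $\omega_y\to\omega_z$ are two distinct edges into $\omega_z$ with $\omega_x\neq\omega_y$. By the definition of the ramifications graph this means $t(\omega_x)=s(\omega_z)=t(\omega_y)=:i$, and $\omega_x^{l_x}\omega_z^{0}\notin I$ and $\omega_y^{l_y}\omega_z^{0}\notin I$, where $\omega_z^0$ is the first arrow of $\omega_z$. Now recall that $\omega_z$ is a maximal ramification-free path containing $z$, so (unless $\omega_z$ has length zero, a degenerate case to handle separately using the cyclic-quiver clause) its internal vertices have in- and out-degree exactly $1$; in particular the arrow $\omega_z^{0}$ can be extended on the left only through vertex $i$, and $i$ must be a genuine ramification vertex, receiving at least the two distinct arrows $\omega_x^{l_x}$ and $\omega_y^{l_y}$. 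The idea is then to extend $\omega_x^{l_x}\omega_z$ and $\omega_y^{l_y}\omega_z$ to maximal paths $\mathfrak{m}$ and $\mathfrak{n}$ of $A$: both contain the common arrow $\omega_z^0$, yet $\mathfrak{m}\neq\mathfrak{n}$ because they differ already in the arrow preceding $\omega_z^0$ (here one must check that in a monomial algebra a path $u\notin I$ with $u\notin\mathcal{M}$ really does extend to some maximal path — this follows since $I$ is admissible, so lengths are bounded, and a longest extension is maximal). Two distinct maximal paths sharing the arrow $\omega_z^0$ contradict the UMP hypothesis. Hence $\omega_z$ cannot be an internal sink; the dual argument, extending $\omega_z$ on the right through two distinct outgoing arrows at $t(\omega_z)$, rules out internal sources.

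Having established that every internal vertex of the undirected connecting path is neither a sink nor a source, I would finish by a short induction on $n$: reading the path from $\omega_{c_0}$, as long as we keep the same orientation we continue; the first place where the orientation would have to change produces a sink or a source at the corresponding internal vertex, contradicting the claim. Therefore all edges point the same way, yielding an oriented path from $\omega_a$ to $\omega_b$ or from $\omega_b$ to $\omega_a$, and since $\omega_a,\omega_b$ were arbitrary, $N$ is unilaterally connected.

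The main obstacle I anticipate is the bookkeeping around the degenerate cases — trivial-length $\omega$'s and, above all, cyclic components, where by definition every $\omega_a$ equals the same fixed repetition-free path $\alpha_0\cdots\alpha_k$, so the component is a single vertex (trivially unilaterally connected); one must make sure the sink/source argument and the ``extend to a maximal path'' step are stated so as to cover, or explicitly bypass, these situations. A secondary technical point is verifying carefully that the two maximal extensions $\mathfrak{m},\mathfrak{n}$ genuinely remain distinct after extension and genuinely share a common arrow — i.e. that passing from ``$\omega_x^{l_x}\omega_z^0\notin I$'' to an actual maximal path does not accidentally collapse the two, which is where the monomial hypothesis (relations are monomials, so non-membership in $I$ is literally about subpaths) does the real work.
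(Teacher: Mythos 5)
Your reduction to a local sink/source analysis does not work, because the key claim on which it rests is false: in a UMP monomial algebra a vertex of $G_{Q,I}$ \emph{can} be an internal source (or sink) of a length-two undirected subpath. The paper's own Example~\ref{ex1} is a counterexample: there $\omega_b=bc$ and one checks that $ca\notin I$ and $cd\notin I$, so $G_{Q,I}$ contains both $\omega_b\rightarrow\omega_a$ and $\omega_b\rightarrow\omega_d$, i.e.\ $\omega_a\leftarrow\omega_b\rightarrow\omega_d$ is exactly the ``source'' configuration you claim to rule out, yet the algebra is UMP and the component is unilaterally connected (via the additional edge $\omega_a\rightarrow\omega_b$). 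The precise point where your argument breaks is the assertion that the maximal extensions $\mathfrak{m}$ of $\omega_x^{l_x}\omega_z^0$ and $\mathfrak{n}$ of $\omega_y^{l_y}\omega_z^0$ ``differ already in the arrow preceding $\omega_z^0$'' and hence are distinct maximal paths sharing an arrow. In a monomial algebra a single maximal path may traverse the same arrow several times with different neighbours --- in Example~\ref{ex1} the unique maximal path $abcabcd$ contains both $ca$ and $cd$ as factors --- so the UMP hypothesis only forces $\mathfrak{m}=\mathfrak{n}$; it does not produce a contradiction. What you dismiss as ``a secondary technical point'' is in fact the entire difficulty.

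The paper's proof is structured precisely around this: it argues by contradiction on a \emph{minimal} pair of vertices $\omega_a,\omega_{a_k}$ that are not unilaterally connected, uses UMP to conclude that a \emph{single} maximal path $\mathfrak{m}$ contains both two-arrow factors $\omega_{b_{d-1}}^{l_{b_{d-1}}}\omega_{b_d}^0$ and $\omega_{a_k}^{l_{a_k}}\omega_{b_d}^0$, and then analyses the two possible orders in which these factors occur inside $\mathfrak{m}$. Each order yields, via the intermediate segment of $\mathfrak{m}$ and a secondary minimality argument (the index $j$), an oriented path in $G_{Q,I}$ between vertices that were assumed not to be unilaterally connected. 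So the contradiction is not with the existence of two maximal paths but with the minimality of the chosen counterexample. To repair your proof you would have to replace the false local lemma by a statement of the form ``if $\omega_x\rightarrow\omega_z\leftarrow\omega_y$ (or the dual), then $\omega_x$ and $\omega_y$ are already unilaterally connected,'' and proving that requires essentially the paper's global analysis of the shared maximal path.
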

\begin{proof}
Let $N\in\mathcal{D}_{Q,I}$ be a weakly connected component and, by contradiction, suppose that $N$ is not unilaterally connected. Then, there exist $a,b\in Q_1$ such that $\omega_a$ and $\omega_b$ are two vertices in $G_{Q,I}$ which are not connected by any oriented path in $N$. Note that they are connected by a path in the undirected underlying graph of $N$. Thus, there exists a sequence $\omega_{a_1},\omega_{a_2},\ldots,\omega_{a_n}$ of vertices in $N$, with $a_i\in Q_1$, $a_1=a$ and $a_n=b$, such that there exists an edge linking $\omega_{a_i}$ and $\omega_{a_{i+1}}$, for each $i\in\{1,\ldots,n-1\}$. Let $k\in\mathbb{Z}^+$ be the least integer such that $\omega_a$ and $\omega_{a_k}$ are not unilaterally connected. Hence, $k>2$ and there is an oriented path $\delta$ in $N$ from $\omega_a$ to $\omega_{a_{k-1}}$ or viceversa. Suppose, without loss of generality, that $\delta$ is an oriented path from $\omega_a$ to $\omega_{a_{k-1}}$, whose factorization by edges in $E$ is $\delta=\delta^1\cdots\delta^d$. Let $b_i$ be an arrow such that $\omega_{b_i}=t(\delta^i)$ for $1\leq i\leq d-1$, and we set $b_0:=a$ and $b_d:=a_{k-1}$. Furthermore, since $N$ is not a unilaterally connected graph, there exists a unique edge $\epsilon$ in $E$ from $\omega_{a_{k}}$ to $\omega_{b_d}$. Thus, $\omega_{a_k}^{l_{a_k}}\omega_{b_d}^0\notin I$ and $\omega_{b_{d-1}}^{l_{b_{d-1}}}\omega_{b_d}^0\notin I$. Due to the fact that $A$ is a UMP monomial algebra, there is a unique maximal path $\mathfrak{m}+I\in\mathcal{M}$ such that $\mathfrak{m}$ contains $\omega_{a_k}^{l_{a_k}}\omega_{b_d}^0$ and $\omega_{b_{d-1}}^{l_{b_{d-1}}}\omega_{b_d}^0$ as factors. It follows that $\mathfrak{m}=x\omega_{b_{d-1}}^{l_{b_{d-1}}}\omega_{b_d}^0y\omega_{a_k}^{l_{a_k}}\omega_{b_d}^0z$ or $\mathfrak{m}=x\omega_{a_k}^{l_{a_k}}\omega_{b_d}^0 y\omega_{b_{d-1}}^{l_{b_{d-1}}}\omega_{b_d}^0 z$, for some $x,y,z\in\mathcal{P}(Q)$. 

Suppose that $\mathfrak{m}=x\omega_{b_{d-1}}^{l_{b_{d-1}}}\omega_{b_d}^0y\omega_{a_k}^{l_{a_k}}\omega_{b_d}^0z$. Since $\omega_{b_d}$ and $\omega_{a_k}$ satisfy the conditions in (\ref{ineqs}) for the arrows $a_{k-1}$ and $a_k$, respectively, we obtain that $\omega_{b_d}\in E_{\omega_{b_d}^0y}$ and $\omega_{a_k}\in T_{y\omega_{a_k}^{l_{a_k}}}$. Now, since $\omega_{b_d}^0y\omega_{a_k}^{l_{a_k}}\notin I$, we conclude that there is a path from $\omega_{b_d}$ to $\omega_{a_k}$, which is a contradiction with the choice of $k$. Hence, $\mathfrak{m}=x\omega_{a_k}^{l_{a_k}}\omega_{b_d}^0 y\omega_{b_{d-1}}^{l_{b_{d-1}}}\omega_{b_d}^0 z$. This implies that the set of non--zero paths in $A$ of the form $\omega_{a_k}^{l_{a_k}}u\omega_{b_j}^{l_{b_j}}$, with $j<d$, is non--empty. Let $j\in\mathbb{Z}$ be the least integer with the latter condition. Then, $j>0$. On the contrary, $\omega_{a_k}^{l_k}u\omega_{b_0}^{l_{b_0}}$ induce a path in $N$ from $\omega_{a_k}$ to $\omega_a=\omega_{b_0}$, which contradicts the choice of $\omega_{a_k}$. Since $j\neq d$, we have by conditions in (\ref{ineqs}) that $\omega_{b_d}\in E_{\omega_{b_d}^0u}$ and $\omega_{b_j}\in T_{u\omega_{b_j}^{l_{b_j}}}$. Hence, $\omega_{b_j}^0$ divides both $\omega_{a_k}^{l_{a_k}}u\omega_{b_j}^{l_{b_j}}$ and $\omega_{b_{j-1}}^{l_{b_{j-1}}}\omega_{b_j}^0$. Furthermore, observe that the choice of $j$ implies that $u^{l_u}\neq w_{b_{j-1}}^{l_{b_{j-1}}}$. Now, since $A$ is a UMP algebra, there exists a unique maximal path $\mathfrak{m}'$ such that either $\mathfrak{m}'=x'\omega_{a_k}^{l_{a_k}}u\omega_{b_j}^{l_{b_j}}y'\omega_{b_{j-1}}^{l_{b_{j-1}}}\omega_{b_j}^0z'$ or $\mathfrak{m}'=x'\omega_{b_{j-1}}^{l_{b_{j-1}}}\omega_{b_j}^0 y'\omega_{a_k}^{l_{a_k}}u\omega_{b_j}^{l_{b_j}}z'$, for some $x', y', z'\in\mathcal{P}(Q)$. However, the former case cannot occur since our assumption on $j$ implies that $\omega_{a_k}^{l_{a_k}}u\omega_{b_j}^{l_{b_j}}y'\omega_{b_{j-1}}^{l_{b_{j-1}}}\in I$. Therefore, $\mathfrak{m}'=x'\omega_{b_{j-1}}^{l_{b_{j-1}}}\omega_{b_j}^0 y'\omega_{a_k}^{l_{a_k}}u\omega_{b_j}^{l_{b_j}}z'$. 

Now, since $\omega_{b_j}\neq\omega_{a_k}$ we have that $\omega_{a_k}\in T_{y'\omega_{a_k}^{l_k}}$. Hence, $\mathfrak{m}'$ defines an oriented path in $N$ from $\omega_{b_j}$ to $\omega_{a_k}$, which contradicts the choice of the arrow $a_k$. This completes the proof.
\end{proof}

The property of the ramifications graph $G_{Q,I}$ in which every weakly connected component is also unilaterally connected, plays an important role for several classes of algebras as in the following discussion. We highlight this property in the next definition.

\begin{definition}\label{def:fullyconn}
Let $A=\Bbbk Q/I$ be a bound quiver algebra. We say that $A$ has \emph{fully connected components} if each weakly connected component of $G_{Q,I}$ is a unilaterally connected graph.
\end{definition}

Theorem \ref{thm:fullyconn} implies that every monomial UMP algebra has fully connected components. However, the converse is not true. That is, there exist monomial algebras having fully connected components which are not UMP algebras. Indeed, consider the monomial algebra $A$ defined by the quiver 
$$
Q:\ \xymatrix{
1  \ar@/^1pc/[rr]^a
&& 2 \ar@/^1pc/[ll]^{b} \ar[rr]^{c} && 3}
$$
bounded by the set of relations $R=\{aba,bab\}$. Then, $A$ is not a UMP algebra since the set of maximal paths is given by $\mathcal{M}=\{bac,ab\}$. Nevertheless, $\omega_a=\omega_b=ba$, $\omega_c=c$ and $G_{Q,I}$ has a unique weakly connected component given by
$$
N:\, \xymatrix{
\omega_a \ar[rr] && \omega_c}
$$
which is a unilaterally connected graph.

However, note that if $A$ is a special multiserial algebra, then $A$ has fully connected components with an additional feature. More precisely, in this case every weakly connected component has one of the following forms, for certain $n\in\mathbb{N}$:
\begin{figure}[!htb]
   \begin{minipage}{0.48\textwidth}
\centering
 $ N:=\,\xymatrix{
\omega_0 \ar[r] & \omega_1\ar[r]  & \cdots\ar[r] & \omega_n}$
   \end{minipage}\hfill
   \begin{minipage}{0.48\textwidth}
 \centering
 $N:=\,\xymatrix{
\omega_0 \ar[r] & \omega_1\ar[r]  & \cdots\ar[r]
& \omega_n \ar@/^2pc/[lll]}$
\end{minipage}
\end{figure}

\vskip 0.3cm

We distinguish this additional graph feature by saying that the corresponding component is of \emph{Nakayama type}. We use this name by its similarity with the classification of finite dimensional Nakayama algebras. It is worth mentioning that this feature is essential in the classification of the special multiserial UMP algebras. More details in \cite{CFR}.

A natural question is whether the features of the graph $G_{Q,I}$ determine specific properties of the corresponding algebra $A$. In this regard, we can announce.

\begin{proposition}
Let $A=\Bbbk Q/I$ be a bound quiver algebra. If each weakly connected component of $G_{Q,I}$ is of Nakayama type and none of its vertices corresponds to a cycle in $Q$, then $A$ is a special multiserial algebra.
\end{proposition}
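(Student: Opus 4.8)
The plan is to verify the defining condition of a special multiserial algebra directly: for every arrow $\alpha\in Q_1$ one must produce at most one arrow $\beta$ with $\alpha\beta\notin I$ and at most one arrow $\gamma$ with $\gamma\alpha\notin I$. By the left--right symmetry of the situation (interchanging $s$ with $t$, first arrows of $\omega$-paths with last arrows, and out-degree with in-degree in $G_{Q,I}$) it suffices to argue the first statement. I would first record two consequences of the hypotheses. Since no vertex of $G_{Q,I}$ corresponds to a cycle of $Q$, the quiver $Q$ cannot be a cyclic quiver — otherwise $\omega_a=\alpha_0\cdots\alpha_k$ would be a closed walk — so each $\omega_a$ is built by the second clause of that definition and is an acyclic path; moreover condition $b)$ in the definition of $\omega_a$ gives the structural fact that \emph{any vertex of the form $i=t(\omega_a^{j})$ with $0\le j<l_a$ has exactly one incoming and exactly one outgoing arrow in $Q$}, i.e.\ a vertex with two or more incoming (or outgoing) arrows is interior to no $\omega$-path.

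Next, fix $\alpha\in Q_1$ and write $\omega_\alpha=\omega_\alpha^0\cdots\omega_\alpha^{l_\alpha}$ with $\alpha=\omega_\alpha^{m}$. If $m<l_\alpha$, then $t(\alpha)=s(\omega_\alpha^{m+1})$ is interior to $\omega_\alpha$ and hence has a unique outgoing arrow $\omega_\alpha^{m+1}$; any $\beta$ with $\alpha\beta\notin I$ satisfies $s(\beta)=t(\alpha)$, so $\beta=\omega_\alpha^{m+1}$ and there is nothing more to prove. If $m=l_\alpha$ — so that $\alpha$ is the last arrow of $\omega_\alpha$ and $t(\omega_\alpha)=t(\alpha)$ — and $t(\alpha)$ has a single outgoing arrow, the conclusion is again immediate. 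This leaves the decisive case: $m=l_\alpha$ and $t(\alpha)$ has at least two outgoing arrows. Here I would pick arbitrary arrows $\beta,\beta'$ with $\alpha\beta\notin I$ and $\alpha\beta'\notin I$ and set out to prove $\beta=\beta'$.

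The core of the proof is to transfer the branching at $t(\alpha)$ to branching at the vertex $\omega_\alpha$ of $G_{Q,I}$. As $t(\alpha)=s(\beta)$ has at least two outgoing arrows, it is interior to no $\omega$-path; in particular, if $\beta$ appeared in $\omega_\beta$ after its initial arrow, say $\beta=\omega_\beta^{k}$ with $k\ge1$, then $s(\beta)=t(\omega_\beta^{k-1})$ would be interior to $\omega_\beta$, a contradiction. Thus $\beta=\omega_\beta^{0}$ and $s(\omega_\beta)=t(\alpha)$, and likewise $\beta'=\omega_{\beta'}^{0}$ and $s(\omega_{\beta'})=t(\alpha)$. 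Furthermore $\omega_\alpha\neq\omega_\beta$, for if they coincided the common path would start and end at $t(\alpha)$, i.e.\ would be a closed walk, contradicting the hypothesis that no vertex of $G_{Q,I}$ is a cycle of $Q$; similarly $\omega_\alpha\neq\omega_{\beta'}$. Since now $t(\omega_\alpha)=t(\alpha)=s(\omega_\beta)$ and $\omega_\alpha^{l_\alpha}\omega_\beta^{0}=\alpha\beta\notin I$ with $\omega_\alpha\neq\omega_\beta$, the definition of $G_{Q,I}$ provides a directed edge $\omega_\alpha\to\omega_\beta$, and in exactly the same way a directed edge $\omega_\alpha\to\omega_{\beta'}$.

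Finally I would apply the Nakayama-type hypothesis. The edges just produced place $\omega_\alpha,\omega_\beta,\omega_{\beta'}$ in a single weakly connected component $N$ of $G_{Q,I}$, whose underlying directed graph is a simple directed path or a simple directed cycle; in either case every vertex has out-degree at most one, whence $\omega_\beta=\omega_{\beta'}$ and therefore $\beta=\omega_\beta^{0}=\omega_{\beta'}^{0}=\beta'$. This settles the claim for outgoing arrows, and the incoming case follows by the symmetric argument, so $A$ is special multiserial. I expect the main obstacle to be the third step above — showing that $\beta$ must be the initial arrow of $\omega_\beta$ and that $\omega_\alpha\neq\omega_\beta$ — since this is precisely where both hypotheses are used essentially (``no $\omega$-path is a cycle'' to separate $\omega_\alpha$ from $\omega_\beta$ and $\omega_{\beta'}$, and ``Nakayama type'' to collapse the two edges into one); the remaining steps are routine unwinding of the definitions of $\omega_a$ and of the ramifications graph.
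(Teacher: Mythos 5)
Your proof is correct and follows essentially the same route as the paper's: both arguments hinge on observing that a branching vertex $t(\alpha)$ forces $\alpha$ to be the last arrow of $\omega_\alpha$ and each candidate $\beta$ to be the first arrow of $\omega_\beta$, then use the no-cycle hypothesis to get $\omega_\alpha\neq\omega_\beta$ and the Nakayama-type hypothesis (out-degree at most one in $G_{Q,I}$) to collapse the candidates. The only difference is organizational — you argue directly with two candidates $\beta,\beta'$ and conclude $\omega_\beta=\omega_{\beta'}$, while the paper argues by contradiction that one of $\omega_b,\omega_c$ must equal $\omega_a$ and hence be a cycle — but the ingredients and their use are identical.
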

\begin{proof}
Let $a,b,c$ be arrows in $Q_1$, with $b\neq c$ and $s(b)=s(c)=t(a)$. Suppose, by contradiction, that $ab\notin I$ and $ac\notin I$. In this case, $\omega_a^{l_a}=a$, $\omega_b^0=b$ and $\omega_c^0=c$. Since each weakly connected component of $G_{Q,I}$ is of Nakayama type, then there exists at most one edge in $G_{Q,I}$ starting at $\omega_a$. Hence, either $\omega_a=\omega_b$ or $\omega_a=\omega_c$. Suppose, without loss of generality, that $\omega_a=\omega_b$. However, this is impossible because $t(\omega_a)=t(a)=s(b)=s(\omega_b)=s(\omega_a)$ and therefore, $\omega$ is a cycle, which is a contradiction with the hypothesis. A similar reasoning applies in the case in which $b\neq c$ and $s(a)=t(b)=t(c)$, for some arrows $a,b,c$ in $Q_1$.
\end{proof}
\begin{remark}
The hypothesis that none of the vertices of $G_{Q,I}$ correspond to a cycle in $Q$ is essential for obtaining that $A$ is a special multiserial algebra. Indeed, the following quiver with  $I=\langle abc\rangle$, defines the algebra $A=\Bbbk Q/I$ which is not a special multiserial algebra, and the unique weakly connected component in $G_{Q,I}$ is of Nakayama type.
\begin{figure}[!htb]
   \begin{minipage}{0.48\textwidth}
\centering
 $ Q:\, \xymatrix{
2  \ar[dr]_{c}
& 1 \ar[l]_b \\
& 3 \ar[u]_a\ar[r]_d & 4 }$
   \end{minipage}\hfill
   \begin{minipage}{0.48\textwidth}
 \centering
 $G_{Q,I}:\, \xymatrix{
\omega_a \ar[rr] && \omega_d}$
\end{minipage}
\end{figure}

\end{remark}

\section{UMP monomial algebras: special cases of classification}\label{sec:UMPclass}
In this section we will discuss the classification of some classes of UMP monomial algebras which are relevant in representation theory such as Nakayama algebras and quadratic monomial special multiserial algebras.

For the sake of completeness, we present the monomial version of the main theorem in \cite{CFR} (see \cite[Corollary 4.1]{CFR}) which we will apply in the two cases of classification mentioned above.

\begin{theorem}\label{thm:UMPmain}
Let $A=\mathbb{K}Q/I$ be a monomial special multiserial algebra and let $R$ be a minimal set of relations such that $I=\langle R \rangle$. Then, $A$ is a UMP algebra if and only if for any zero relation $r\in R$ with length greater than two, there exists a path $u$, such that $t(u)=s(u)$ and 
\begin{equation*}\label{eq:main}
   R\cap\{\text{subpaths of}\,\, u^{(p)}:\,p\in\mathbb{Z}^+\}=\{r\}. 
\end{equation*}
where $u^{(p)}$ denotes the composition of the path $u$ with itself $p$-times.
\end{theorem}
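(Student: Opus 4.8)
The plan is to localize the statement along the weakly connected components of $G_{Q,I}$ and then to run a direct combinatorial analysis, exploiting that for a monomial special multiserial algebra the components are as simple as possible. First I would apply Corollary~\ref{cor:local1}$(1)$: since $A$ is monomial, $A$ is a UMP algebra if and only if every $A_N$, $N\in\mathcal{D}_{Q,I}$, is. Each $A_N$ is again monomial and special multiserial, with minimal relation set $R_N=R\cap\mathcal{P}(Q_N)$, and $R=\bigsqcup_N R_N$ since every $r\in R$, being a path, lies in $\mathcal{P}(Q_{N(r)})$. I would also note that the displayed condition localizes: if $u\in\mathcal{P}(Q_{N(r)})$, then every subpath of every $u^{(p)}$ again lies in $\mathcal{P}(Q_{N(r)})$, so $R\cap\{\text{subpaths of }u^{(p)}:p\in\mathbb{Z}^+\}=\{r\}$ is equivalent to the same equality with $R$ replaced by $R_{N(r)}$. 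Hence it suffices to prove the equivalence componentwise. Finally, I would use the structural description discussed after Theorem~\ref{thm:fullyconn} (and proved in \cite{CFR}): a component of a special multiserial algebra is of Nakayama type, so $Q_N$ is either a linear quiver or an oriented cycle, i.e. $A_N$ is a linear or a cyclic Nakayama algebra.

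For a linear Nakayama $A_N$ there is no path $u$ with $t(u)=s(u)$, so the displayed condition for this component says simply that $R_N$ contains no relation of length greater than two; I would show this is exactly the UMP property of $A_N$. If $r\in R_N$ had length $\ell\geq 3$, its two proper subpaths of length $\ell-1$ are nonzero, and since in a special multiserial algebra every nonzero path admits a unique one-step extension on each side, both of them extend to maximal paths which share the ``middle'' arrows of $r$; under UMP these maximal paths must coincide, but one of them is forced to end exactly where the other is forced to begin, which is impossible for length reasons. Conversely, if all relations have length two, the nonzero paths are precisely the proper subpaths of the arcs delimited by the ``cut'' vertices, the maximal paths are these arcs, and they are pairwise disjoint, so $A_N$ is UMP.

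The cyclic case is the heart of the matter. When $Q_N$ is an oriented $m$-cycle, any $u$ with $t(u)=s(u)$ winds around it, so $\{\text{subpaths of }u^{(p)}:p\in\mathbb{Z}^+\}$ is the set of \emph{all} paths of $Q_N$; the displayed condition therefore collapses to: every relation of length greater than two is the unique element of $R_N$. So I must prove that $A_N$ is UMP if and only if either all relations of $R_N$ have length two, or $R_N=\{r\}$ with $r$ of length greater than two. For the ``only if'' direction: $(a)$ if $A_N$ is UMP and has at least two maximal paths, those partition the $m$-cycle into arcs, so every length-two path straddling a cut between adjacent arcs must be zero (otherwise it would extend to a maximal path meeting two different arcs, whereas each maximal path is a single arc), and then any relation not contained in a single arc would contain such a length-two relation, forcing all relations to have length two; $(b)$ if $A_N$ is UMP and some $r\in R_N$ has length greater than two, then by $(a)$ there is a unique maximal path $\mathfrak{m}$, and the unique-extension analysis from the linear case forces $\mathfrak{m}$ to begin and end at vertices determined by $r$, after which a hypothetical second relation $q\neq r$ would, through its nonzero proper subpath sitting inside $\mathfrak{m}$, be forced to be a subpath of $\mathfrak{m}$ (impossible, as $q\in I$ and $\mathfrak{m}\notin I$) or to contain or be contained in $r$ (contradicting minimality), so $R_N=\{r\}$. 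For the ``if'' direction: if all relations have length two the maximal paths are again the arcs between cuts, which are pairwise disjoint; and if $R_N=\{r\}$ with $r$ of length greater than two, then after relabeling so that $r$ is an arc starting at a fixed vertex, a direct computation of which arcs are simultaneously left- and right-maximal shows that there is a unique maximal path, so $A_N$ is UMP.

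I expect part $(b)$ of the cyclic case to be the main obstacle: excluding every placement of a second relation inside the (possibly winding) unique maximal path requires some care with cyclic-arc bookkeeping. A cleaner route, which I would also record, is to deduce the statement directly from the main classification result \cite[Corollary 4.1]{CFR}: in the monomial case the $\omega$-relations of \cite[Def. 4.4]{CFR} coincide with the elements of $R$ by \cite[Remark 4.2\,$iii)$]{CFR}, and the ``cyclic power'' condition on $\omega$-relations appearing there specializes verbatim to the displayed condition, the componentwise analysis above serving only to make this specialization transparent.
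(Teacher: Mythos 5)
Your localization to the components $N\in\mathcal{D}_{Q,I}$ and your fallback of quoting \cite[Corollary 4.1]{CFR} are both sound --- in fact the paper gives no independent proof of Theorem \ref{thm:UMPmain}, presenting it exactly as that monomial specialization. But the self-contained argument that forms the body of your proposal breaks at the structural step. A component $N$ of Nakayama type is a chain or a cycle of the \emph{vertices} $\omega_i$ of $G_{Q,I}$; this does not make $Q_N$ a linear quiver or an oriented cycle, because the paths $\omega_i$ may share endpoints in $Q$. Concretely, take $Q$ with vertices $v,1,2$ and arrows $a_1\colon v\to 1$, $a_2\colon 1\to v$, $b_1\colon v\to 2$, $b_2\colon 2\to v$, with $R=\{a_2a_1,\ b_2b_1,\ a_1a_2b_1b_2\}$. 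This is a monomial special multiserial algebra with admissible ideal; one computes $\omega_{a_1}=\omega_{a_2}=a_1a_2$ and $\omega_{b_1}=\omega_{b_2}=b_1b_2$, and the single component $N$ is the $2$-cycle $\omega_{a_1}\rightleftarrows\omega_{b_1}$, hence of Nakayama type, while $Q_N=Q$ is a figure-eight --- neither linear nor an oriented cycle. Every nonzero path is a window of the periodic sequence $\cdots a_1a_2b_1b_2a_1a_2b_1b_2\cdots$, the unique maximal path is $a_2b_1b_2a_1a_2b_1$, so $A$ is UMP, and the displayed condition of the theorem holds with $u=a_1a_2b_1b_2$. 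Yet $R_N=R$ has three elements, one of length four. This falsifies the central claim of your cyclic case, namely that for a cyclic component the condition collapses to ``the relation of length greater than two is the unique element of $R_N$'', and equally the asserted equivalence that a UMP cyclic component either has only quadratic relations or has $R_N=\{r\}$ a singleton: carried through, your argument would conclude that the algebra above is not UMP. (The premise of your linear case fails for the same reason: a linear $G$-component can have $Q_N$ containing cycles, e.g. $a\colon 1\to 2$, $b\colon 2\to 1$, $c\colon 2\to 3$ with $R=\{ab,ba\}$, where $N$ is $\omega_a\to\omega_c$ but $Q_N$ is not acyclic.)

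The case analysis therefore has to be organized around the cycle $u$ carrying a given long relation --- allowing length-two relations to coexist on $Q_N$ away from, or transverse to, that cycle --- rather than around a putative dichotomy for the shape of $Q_N$. Since the paper itself only cites \cite[Corollary 4.1]{CFR} together with the observation that $\omega$-relations coincide with $R$ in the monomial case, your closing remark is in effect the paper's entire argument; the combinatorial proof you sketch would be a genuine addition, but not in its present form.
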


\subsection{Nakayama algebras}
In this subsection we classify the selfinjective Nakayama algebras which are also UMP algebras. 

Let $A=\Bbbk Q/I$ be a bound quiver algebra. It is well known that $A$ is a selfinjective Nakayama algebra if and only if $A=N_n^m(\Bbbk)$, for some positive integers $m$ and $n$ (see \cite[Thm. 6.15, p. 384]{SY}). Here, $N_n^m(\Bbbk)=\Bbbk\Delta_n/I_{n,m}$ is the associated bound quiver algebra to the cyclic quiver:
$$
\Delta_n\,:=\,\xymatrix{
1 \ar[r]^{\alpha_1} & 2\ar[r]^{\alpha_2}  & 3 \ar[r]^{\alpha_3} &\cdots\ar[r]^{\alpha_{i-1}} & i \ar[r]^-{\alpha_i} & \cdots n-1 \ar[r]^-{\alpha_{n-1}} & n \ar@/^3pc/[llllll]^{\alpha_n}
}
$$
and $I_{n,m}$ is the admissible ideal of the path algebra $\Bbbk\Delta_n$ generated by all compositions of $m+1$ consecutive arrows in $\Delta_n$. As a direct consequence of Theorem \ref{thm:UMPmain} we obtain the following proposition.
\begin{proposition}\label{prop:selfinj}
The selfinjective Nakayama algebra $A=N_n^m(\Bbbk)$ is a UMP algebra if and only if $A=N_n^1(\Bbbk)$. 
\end{proposition}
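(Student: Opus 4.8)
The plan is to apply Theorem~\ref{thm:UMPmain} directly to the algebra $A=N_n^m(\Bbbk)=\Bbbk\Delta_n/I_{n,m}$, after first checking that this is indeed a monomial special multiserial algebra (which is clear: $\Delta_n$ is a cyclic quiver, every vertex has exactly one arrow in and one arrow out, and $I_{n,m}$ is generated by monomials — the paths of length $m+1$). For the direction ``$A=N_n^1(\Bbbk)\Rightarrow A$ is UMP'': when $m=1$, the ideal $I_{n,1}$ is generated by all paths of length $2$, i.e.\ by relations of length exactly two, so the condition in Theorem~\ref{thm:UMPmain} (which only constrains relations of length \emph{greater} than two) is vacuously satisfied, and $A$ is UMP. (Alternatively, one can just observe directly that when $m=1$ the maximal paths are exactly the arrows $\alpha_i$, which are pairwise disjoint.)

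For the converse, suppose $m\ge 2$; I would show the condition of Theorem~\ref{thm:UMPmain} fails. The minimal set of relations $R$ consists of the $n$ paths of length $m+1$, each of the form $r_i=\alpha_i\alpha_{i+1}\cdots\alpha_{i+m}$ (indices mod $n$), and each has length $m+1>2$. Fix one such $r=r_i\in R$. The condition requires a cyclic path $u$ (with $t(u)=s(u)$) such that $R\cap\{\text{subpaths of }u^{(p)}:p\in\mathbb{Z}^+\}=\{r\}$. In the cyclic quiver $\Delta_n$, the only cyclic paths through $s(r)=i$ are (up to rotation, but they must start and end at the same vertex) powers of the full cycle $c_i:=\alpha_i\alpha_{i+1}\cdots\alpha_n\alpha_1\cdots\alpha_{i-1}$ of length $n$; more precisely any cyclic path $u$ is a power of the fundamental cycle at its basepoint. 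So $u^{(p)}$ traverses the whole cycle and its subpaths of length $m+1$ are \emph{all} of $r_1,\dots,r_n$. Hence $R\cap\{\text{subpaths of }u^{(p)}\}=R$, and since $n\ge 2$ (a cyclic quiver has at least... well, $\Delta_1$ is a loop, but even then $R=\{r_1\}$ — wait, need to handle $n=1$ separately) this set is strictly larger than $\{r\}$ whenever $n\ge2$, contradicting the Theorem's criterion. Therefore $A$ is not UMP when $m\ge2$.

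I should be careful about the basepoint subtlety in the criterion and about small cases: for $n=1$ the quiver $\Delta_1$ is a single loop $\alpha_1$, $R=\{\alpha_1^{m+1}\}$ is a single relation, and taking $u=\alpha_1$ gives $R\cap\{\text{subpaths of }u^{(p)}\}=\{\alpha_1^{m+1}\}=\{r\}$ for \emph{every} $m$ — so $N_1^m(\Bbbk)$ would be UMP for all $m$, which seems to contradict the Proposition as stated. The likely resolution is that $N_1^m(\Bbbk)$ is not admissible / not included (or that $\mathcal M=\{\alpha_1^m\}$ is a single maximal path, hence trivially UMP but the statement is understood for $n\ge2$); I would add a sentence pinning this down, so that the only real content is the $n\ge2$, $m\ge2$ case above. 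The main obstacle, then, is not any hard computation but rather stating precisely which cyclic paths $u$ can occur and confirming that any such $u$ forces all of $R$ into the intersection — once the ``$u$ is a power of the full cycle'' observation is in place, the contradiction with $|R|=n\ge2>1$ is immediate.
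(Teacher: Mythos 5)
Your proposal is correct and follows essentially the same route as the paper, which offers no separate argument but derives the proposition directly from Theorem~\ref{thm:UMPmain} exactly as you do: the criterion is vacuous for $m=1$, while for $m\ge 2$ every cycle in $\Delta_n$ is a power of the full cycle, so its powers contain all $n$ relations of length $m+1$ as subpaths, violating the condition when $n\ge 2$. Your observation about $n=1$ (where $N_1^m(\Bbbk)=\Bbbk[x]/(x^{m+1})$ has a single maximal path and satisfies the criterion for every $m$) is a legitimate boundary case that the paper leaves implicit, and it is worth the clarifying sentence you propose.
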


Nevertheless, following the classification Theorem in \cite[Thm. 10.3, p. 102]{SY}, there exist non-selfinjective Nakayama algebras $A$ which are UMP algebras. For example, consider the quiver
$$
Q:\,\xymatrix{                                  
              &3\ar[dl]_c &  \\
1 \ar[rr]_a &           &2\ar[ul]_b
}
$$
with $I=\langle abc\rangle$. The bound quiver algebra $A=\Bbbk Q/I$ is a Nakayama algebra which is a UMP algebra by Theorem \ref{thm:UMPmain} and is not a selfinjective Nakayama algebra by Proposition \ref{prop:selfinj}.

\subsection{Quadratic monomial special multiserial algebras}
As an immediate consequence of Theorem \ref{thm:UMPmain}, any quadratic monomial special multiserial algebra is a UMP algebra, which was firstly proved in \cite[Lemma 12]{FGR1}. The following result establishes that the converse also holds.

\begin{theorem}\label{thm:umpquadratic}
Let $A=\Bbbk Q/I$ be a quadratic monomial algebra and let $R$ be a minimal set of relations such that $I=\langle R \rangle$. Then $A$ is a UMP algebra if and only if $A$ is a special multiserial algebra.
\end{theorem}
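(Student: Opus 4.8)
The plan is to prove each implication separately. The forward direction --- that a special multiserial quadratic monomial algebra is UMP --- is already noted to follow immediately from Theorem~\ref{thm:UMPmain}, since there are no relations of length greater than two, so the universally quantified condition in that theorem is vacuously satisfied. (Alternatively one cites \cite[Lemma 12]{FGR1}.) So the real content is the converse: if $A=\Bbbk Q/I$ is a quadratic monomial algebra that is \emph{not} special multiserial, then $A$ is not UMP.

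For the converse I would argue by contrapositive. Assume $A$ is quadratic monomial but fails the special multiserial condition. Then there is an arrow, and by symmetry we may assume the failure is on the ``outgoing'' side: there exist an arrow $a\in Q_1$ and two distinct arrows $b,c\in Q_1$ with $s(b)=s(c)=t(a)$ and $ab\notin I$, $ac\notin I$ (the case $s(a)=t(b)=t(c)$ is handled by the obvious dual argument, or by passing to the opposite algebra). Now I would produce two distinct maximal paths that share the arrow $a$, contradicting UMP. Concretely: since $R$ consists of paths of length two and $ab,ac\notin I$, the path $ab$ extends to a maximal path $\mathfrak{m}$ and the path $ac$ extends to a maximal path $\mathfrak{m}'$ --- here I use that in a monomial algebra every nonzero path is a subpath of some maximal path (one keeps prepending and appending arrows as long as the result stays nonzero; this terminates because $I$ is admissible, so sufficiently long paths vanish, and one must check the process does produce a genuine maximal path, which is standard for monomial algebras). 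Both $\mathfrak{m}$ and $\mathfrak{m}'$ contain $a$ as a factor, so they are not disjoint. It remains to see $\mathfrak{m}\neq\mathfrak{m}'$: since $ab$ is a factor of $\mathfrak{m}$ and $ac$ a factor of $\mathfrak{m}'$ with $b\neq c$, if $\mathfrak{m}=\mathfrak{m}'$ then this common path would contain two distinct arrows, namely $b$ and $c$, immediately following an occurrence of $a$. If $a$ occurs only once in $\mathfrak{m}$ this is an immediate contradiction. If $a$ occurs more than once --- which can only happen when $Q$ has a cycle through $a$ --- one needs a small extra argument: in a quadratic monomial algebra, if $a$ occurs twice in a nonzero path then the arrow $d$ following the first occurrence of $a$ satisfies $ad\notin I$, and since $A$ is quadratic monomial the set $\{x\in Q_1: ax\notin I\}$ would have to contain $b$, $c$ and $d$; I would instead argue directly that the two maximal paths are forced to differ by choosing $\mathfrak{m}$ to \emph{begin} with $ab$ (i.e.\ to have $a$ as a prefix, which is possible: take the maximal path obtained by only appending arrows after $ab$, never prepending) and similarly $\mathfrak{m}'$ to begin with $ac$; then the second arrow of $\mathfrak{m}$ is $b$ while the second arrow of $\mathfrak{m}'$ is $c$, so $\mathfrak{m}\neq\mathfrak{m}'$.

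The cleanest way to organize this is therefore: (i) if $ab\notin I$ with $a$ an arrow, then the path $ab$ is a factor of a maximal path $\mathfrak m$ which has $a$ as a \emph{prefix} (the process of only appending arrows to the right of $ab$ must terminate at a maximal path, since admissibility kills long paths and monomiality guarantees that when no further arrow can be appended without landing in $I$, the resulting path together with the fact that nothing can be appended \emph{or} prepended at the $a$-end --- wait, we only controlled the right end, so one must also check the left end can be extended to maximality without disturbing the appearance of $a$ near the start; more carefully: append to the right until stuck, then prepend to the left until stuck --- prepending never removes the factor $ab$, it only lengthens, so the final path still contains $ab$, though $a$ may no longer be a literal prefix). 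Given this subtlety, I would drop the ``prefix'' bookkeeping and instead use the occurrence-counting argument: let $\mathfrak m\ni ab$ and $\mathfrak m'\ni ac$ be maximal; both contain $a$; if $\mathfrak m=\mathfrak m'=:w$, then $w$ contains an occurrence of $a$ followed by $b$ and an occurrence of $a$ followed by $c$; these must be different occurrences of $a$ (as $b\neq c$), so $w$ contains a subpath $a\cdots a$; but then, reading $w$ from its first arrow, the second arrow after the initial arrow is determined, and iterating, $w$ is eventually periodic through the cyclic part --- and within one period the arrow following $a$ is unique, contradicting that it is both $b$ and $c$. I expect the main obstacle to be exactly this bookkeeping in the cyclic/repeated-arrow case: making rigorous that in a quadratic monomial algebra ``the arrow following a fixed arrow $a$ inside a maximal path is unique'' (equivalently, $\{x\in Q_1: ax\notin I\}$ is a singleton once we know $A$ is \emph{not} special multiserial only tells us it has $\geq 2$ elements, so one cannot use uniqueness --- instead the contradiction is that no single nonzero path can realize two different continuations of the \emph{same} arrow, because a path is a sequence and the letter after a given position is a single arrow). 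Spelling that out carefully --- distinguishing ``the arrow $a$ as an abstract element of $Q_1$'' from ``a specific occurrence of $a$ at a position in the word $w$'' --- is the crux, and once it is phrased that way the contradiction with $\mathcal M$ having a unique maximal path through $a$ is immediate, completing the proof.
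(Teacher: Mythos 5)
Your forward direction matches the paper's (a vacuous application of Theorem~\ref{thm:UMPmain}), and your overall strategy for the converse --- extend $ab$ and $ac$ to maximal paths $\mathfrak{m}$, $\mathfrak{m}'$ that share the arrow $a$, then show they are distinct --- is the same skeleton as the paper's proof by contradiction. The gap sits exactly at the step you yourself flag as the crux: ruling out $\mathfrak{m}=\mathfrak{m}'$. Your final argument claims that a single nonzero path $w$ cannot contain $ab$ and $ac$ at two different occurrences of $a$ because ``the second arrow after the initial arrow is determined'' and $w$ is ``eventually periodic.'' Neither assertion is justified: a path in a quiver is an arbitrary word of composable arrows, and $w=x\,a\,b\,x'\,a\,c\,x''$ (with $a$ at two positions, followed by $b$ at one and by $c$ at the other) is a perfectly legitimate element of $\mathcal{P}(Q)$. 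The observation that ``the letter after a given position is a single arrow'' only re-establishes that the two occurrences of $a$ are distinct --- which you already knew --- and does not make the configuration impossible.

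What actually kills this configuration is the algebra, not the word combinatorics, and this is the one idea your proposal is missing. If $w=x\,a\,b\,x'\,a\,c\,x''$ is nonzero, then $b\,x'\,a$ is a cycle; by admissibility of $I$ some power $(bx'a)^{(l)}$ lies in $I$, hence is divisible by some $r\in R$; since $A$ is quadratic, $r$ has length two, so $r$ is either a length-two subpath of $bx'a$ itself or the wrap-around subpath $ab$. The latter is excluded because $ab\notin I$, and the former contradicts the fact that $bx'a$ divides the nonzero path $w$. This admissibility-plus-quadraticity argument is precisely how the paper disposes of the case (and symmetrically of $x\,a\,c\,x'\,a\,b\,x''$ and of the dual incoming-arrow case). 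Without it your contrapositive does not close: when $\mathfrak{m}=\mathfrak{m}'$ you have exhibited no pair of distinct non-disjoint maximal paths, so you have not shown that $A$ fails UMP.
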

\begin{proof}
If $A$ is a special multiserial algebra, then by Theorem \ref{thm:UMPmain}, $A$ is a UMP algebra. Conversely, suppose that $A$ is a UMP algebra and, by contradiction, suppose also that $A$ is not a special multiserial algebra. Then, we have two cases: 
\begin{enumerate}
\item There exist $\alpha,\beta,\gamma\in Q_1$, with $\beta\neq\gamma$ and $s(\beta)=s(\gamma)=t(\alpha)$, such that $\alpha\beta\notin I$ and $\alpha\gamma\notin I$.
\item There exist $\alpha,\beta,\gamma\in Q_1$, with $\alpha\neq\beta$ and $t(\alpha)=t(\beta)=s(\gamma)$, such that $\alpha\gamma\notin I$ and $\beta\gamma\notin I$.
\end{enumerate}
 In the case $(1)$, since $A$ is a UMP algebra, there exists a maximal path of the form either $x\alpha\beta x'\alpha\gamma x''$ or $x\alpha\gamma x'\alpha\beta x''$, for certain $x,x',x''\in\mathcal{P}(Q)$. In the former situation, $\beta x'\alpha$ is a cycle, and hence there exists $r\in R$ such that $r\mid(\beta x'\alpha)^{(l)}$, for some $l\in\mathbb{Z}^+$, because $I$ is an admissible ideal. Due to the fact that $A$ is a quadratic monomial algebra, the length of $r$ is two. Now, since $\alpha\beta\notin I$, we have $r\mid\beta x'\alpha$, which contradicts the maximality of the path $x\alpha\beta x'\alpha\gamma x''$. An analogous reasoning applies for the latter situation. Similarly, for the case $(2)$ we also obtain a contradiction. Thus, $A$ is a special multiserial algebra.
\end{proof}

\section{Homological consequences}\label{sec:GprojUMP}
Inspired by Theorems \ref{thm:UMPmain} and \ref{thm:umpquadratic} and the study of Gorenstein projective modules over quadratic monomial algebras presented in \cite{chen2018gorenstein}, this section is dedicated to extending the tools and results of \cite{chen2018gorenstein} to the UMP context. Our main findings are presented in subsection \ref{subsec:mainfind}, with preceding subsections introducing essential tools and results from a UMP perspective, thus demonstrating the naturality of our generalizations.

\subsection{About minimal sets of relations}
We start by providing an essential background for understanding the main results and subsequent computations. For each vertex $i\in Q_0$, we define the sets:
\begin{eqnarray}\label{eq:pm}
    i^+=\{\alpha\in Q_1\mid\, s(\alpha)=i\}&\mbox{and}&i^-=\{\alpha\in Q_1\mid\,t(\alpha)=i\}.
\end{eqnarray}

Let $A=\Bbbk Q/I$ be a monomial algebra, and $R$ be a minimal set of relations such that $I=\langle R \rangle$. For a nonzero path $p$, i.e., $p$ does not contain a subpath in $R$, we denote by $Ap$ (resp. $pA$) the left (resp. the right) ideal generated by $p$. In this case, the basis of $Ap$ (resp. $pA$) are nonzero paths $q$ such that $q=rp$ (resp. $q=pr$), for some path $r$. 

For any subset $\mathbf{S}$ of $\mathcal{P}(Q)$ we say that a path $p$ in $\mathbf{S}$ is left-minimal (resp. right-minimal) in $\mathbf{S}$ provided that there is no path $q\in \mathbf{S}$ such that $p = p'q$ (resp. $p=qp'$) for some non-trivial path $p'$. Following \cite{chen2018gorenstein}, we define $\mathcal{R}(p)$ (resp. $\mathcal{L}(p)$) the set of right-minimal (resp. left-minimal) paths in the set
$$
\left\{\text{nonzero paths}\, q\mid s(q)=t(p)\,\text{and}\,pq=0\right\}\,(\text{resp.} \left\{\text{nonzero paths}\, q\mid t(q)=s(p)\,\text{and}\,qp=0\right\})
$$

\begin{remark}\label{rmk:RLsets}
If $A$ is a monomial and special multiserial algebra, we have the following classification of the sets $\mathcal{R}(p)$ and $\mathcal{L}(p)$, for any nonzero path $p$: If $q\in \mathcal{R}(p)$ (resp. $q\in \mathcal{L}(p)$) then $q$ is either an arrow or the unique path of length greater than 1.

In fact, if $q\in \mathcal{R}(p)$ is not an arrow, then $p^{l_p}q^0\neq 0$. Consequently, for any $r\in \mathcal{R}(p)$ with $r$ distinct from $q$, it follows that $p^{l_p}r^0= 0$, since $A$ is a special multiserial algebra. Thus, $r=r^0$. A similar argument applies to $\mathcal{L}(p)$.
\end{remark}

\begin{remark}\label{remark_descripcion_relaciones}
An immediate consequence from Theorem \ref{thm:UMPmain} for the minimal set of relations $R$ of a monomial, special multiserial UMP algebra $A$ is as follows: If $p_1,\,p_2$, $p_3$ are nonzero paths such that $p_1p_2,\,p_2p_3\in R$ are relations with length greater than two, then $p_1p_2=p_2p_3$ as paths in $Q$.
\end{remark}

\begin{lemma}\label{lemma3.14}
Let $A=\Bbbk Q/I$ be a monomial special multiserial UMP algebra, and let $R$ be a minimal set of relations such that $I=\langle R \rangle$. Consider $p\in R$ for which there exists a cycle $u=u^0\cdots u^{l_u}$, with $l_u\geq 1$, such that $R\cap\{\text{subpaths of}\,\, u^{(l)}:\,l\in\mathbb{Z}^+\}=\{p\}$.
\begin{enumerate}
\item Let $q$ and $r$ be nonzero paths with $t(r)=s(q)$ (resp. $s(r)=t(q)$) and such that $rq=0$ (resp. $qr=0$). If $q$ is disjoint from $u$ and, for some $l$, $r\mid u^{(l+1)}$, then $r^{l_r}q^0\in R$ (resp. $q^{l_q}r^0\in R$).
\item If $q$ is a nonzero path sharing a subpath with $u^{(m)}$ for some $m$, then $q$ is a subpath of $u^{(m)}$. 
\end{enumerate}
\end{lemma}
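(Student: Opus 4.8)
The plan is to run both parts on a single observation. Since $p$ is a relation of length greater than two and is the only relation occurring among the subpaths of the powers of $u$, \emph{every length-two subpath of a power of $u$ is a nonzero path}. Coupling this with the special multiserial hypothesis — each arrow has at most one arrow extending it to a nonzero path on the right, and at most one on the left — forces any nonzero path that meets $u$ to stay on $u$.

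\textbf{Part $(2)$.} I would pick a subpath $v$ of $q$ of maximal length among the subpaths of $q$ that are also subpaths of some power of $u$; such a $v$ exists by hypothesis, and the assertion is exactly that $v=q$. Suppose $v\neq q$. Then $v$ has a neighbouring arrow inside $q$, say $v\beta$ is a subpath of $q$ for an arrow $\beta$ (a left neighbour being handled symmetrically). Since $v\beta$ divides $q$ it is nonzero, hence $v^{l_v}\beta\neq 0$. As $v$ is a subpath of some power of $u$, after passing to a higher power of $u$ if necessary we may assume that $v$ is immediately followed there by an arrow $u^{\sigma}$ of $u$; then $v^{l_v}u^{\sigma}$ is a length-two subpath of a power of $u$, hence nonzero. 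Thus $v^{l_v}$ has the two nonzero right extensions $\beta$ and $u^{\sigma}$, so special multiseriality gives $\beta=u^{\sigma}$. But then $v\beta=vu^{\sigma}$ is a subpath of a power of $u$ strictly longer than $v$, contradicting the choice of $v$. Hence $v=q$, i.e.\ $q$ is a subpath of a power of $u$.

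\textbf{Part $(1)$.} I would treat the case $rq=0$, the case $qr=0$ being symmetric. Since $rq=0$, some $\rho\in R$ divides $rq$. As $r$ and $q$ are nonzero, $\rho$ lies neither inside $r$ nor inside $q$, so it meets both: its portion in $r$ is a nonempty suffix of $r$ and its portion in $q$ a nonempty prefix of $q$, whence $r^{l_r}q^{0}$ divides $\rho$. If $\rho\neq r^{l_r}q^{0}$, then $r^{l_r}q^{0}$ is a proper subpath of the minimal relation $\rho$, hence nonzero. On the other hand, from $r\mid u^{(l+1)}$ we obtain, after enlarging the power if needed, an arrow $u^{\sigma}$ of $u$ immediately following $r$ in a power of $u$, and $r^{l_r}u^{\sigma}$, a length-two subpath of a power of $u$, is nonzero. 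So $r^{l_r}$ admits the two nonzero right extensions $q^{0}$ and $u^{\sigma}$, which are distinct because $q$ is disjoint from $u$ while $u^{\sigma}$ is an arrow of $u$; this contradicts special multiseriality. Therefore $\rho=r^{l_r}q^{0}\in R$. For $qr=0$ one argues identically with the first arrow $r^{0}$ of $r$, the arrow of $u$ \emph{preceding} $r$ in a power of $u$, and the special multiserial condition on incoming arrows, to conclude $q^{l_q}r^{0}\in R$.

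These arguments are short once the framework is fixed; the one point that needs care, in part $(2)$, is that the occurrence of $v$ inside a power of $u$ may end (or begin) exactly at the seam between two consecutive copies of $u$, so that ``the arrow of $u$ following (resp.\ preceding) $v$'' and ``$v\beta$ (resp.\ $\beta v$) is a subpath of a power of $u$'' must be read after replacing the power by a higher one. With that bookkeeping in place, it is precisely the hypothesis that $p$ has length greater than two — which guarantees that the two-arrow step off the end of $v$ is nonzero — that allows special multiseriality to push any extension of $v$ back onto the cycle $u$; no further appeal to Theorem~\ref{thm:UMPmain} or Remark~\ref{remark_descripcion_relaciones} beyond the structure of $u$ already built into the hypotheses is required.
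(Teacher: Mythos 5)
Your proof is correct and follows essentially the same route as the paper's: both parts rest on the observation that every length-two subpath of a power of $u$ is nonzero (because $p$ is the unique relation occurring on the cycle and has length greater than two), after which special multiseriality forces any nonzero extension of a path lying on $u$ to continue along $u$, resp.\ forces $r^{l_r}q^0$ into $R$. Your organization of part $(2)$ around a maximal common subpath is a slightly more careful rendering of the paper's decomposition $q=q'vq''$ (including the seam between consecutive copies of $u$), and, like the paper, you take the length of $p$ to be greater than two as given rather than derived from the literally stated hypotheses.
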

 \begin{proof}
First, note that $l_p\geq2$ by Remark \ref{remark_descripcion_relaciones}, and by Theorem \ref{thm:UMPmain}, $p$ is the unique relation appearing in the cycle $u$. Thus, for the first claim, if $rq=0$, we have $r^{l_r}u^j\neq 0$ for some $0\leq j\leq l_u$. Since $A$ is special multiserial, it follows that $r^{l_r}q^0\in R$. The other case follows similarly.

For the second claim, suppose that $q=q'vq''$, where $v$ a nonzero subpath of $u^{(m)}$. Assume that $q'$ and $q''$ are non-trivial paths such that $q'\nmid u$ and $q''\nmid u$. Since $l_p\geq2$ and $A$ is an UMP algebra, it follows that $v^{l_v}u^j\neq 0$, for some $1\leq j\leq l_u$. Consequently, $v^{l_v}(q'')^{0}=0$, as $A$ is special multiserial. Then, $q=0$, which contradicts the hypothesis. Thus, $q''$ must be a trivial path.  By a similar argument, $q'$ is also a trivial path. Hence, $q=v$ and $q$ is a subpath of $u^{(m)}$.
\end{proof}

Now, if $A$ is a monomial, special multiserial UMP algebra, we can provide a more detailed description of the sets $\mathcal{R}(p)$ and $\mathcal{L}(p)$, for any nonzero path $p$ (cf. Remark \ref{rmk:RLsets}).

\begin{lemma}\label{lemma_LR}
Let $A=\Bbbk Q/I$ be a monomial special multiserial UMP algebra, and let $R$ be a minimal set of relations such that $I=\langle R \rangle$. For a nonzero path $p$, we have $q\in \mathcal{R}(p)$ (resp. $q\in \mathcal{L}(p)$) if and only if $rq\in R$ (resp. $qr\in R$), for some non-trivial path $r$ with $p=p'r$ (resp. $p=rp'$).
\end{lemma}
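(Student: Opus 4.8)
The plan is to prove the two directions separately, treating the statements about $\mathcal{R}(p)$ in detail and noting that the claims about $\mathcal{L}(p)$ follow by the obvious left-right symmetry. For the ``if'' direction, suppose $rq \in R$ with $r$ non-trivial and $p = p'r$. Since $rq \in R \subseteq I$ and $p = p'r$, we have $pq = p'rq \in I$, so $q$ belongs to the set $\{\text{nonzero paths } q' \mid s(q') = t(p),\ pq' = 0\}$ defining $\mathcal{R}(p)$ (note $q$ is nonzero because $rq \in R$ is a \emph{minimal} relation, so no proper subpath of $rq$ lies in $I$, and $q$ is a proper subpath as $r$ is non-trivial). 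It remains to check that $q$ is right-minimal in that set: if $q = q_1 q_2$ with $q_2$ in the set and $q_1$ non-trivial, then $p q_2 = 0$ while $p q = p q_1 q_2 = 0$; but $rq = r q_1 q_2 \in R$ is minimal, so $r q_1 q_2$ contains no proper zero subpath, yet $p q_2 = p' r q_2 \in I$ forces a zero subpath $w$ of $r q_2$ (lying in $R$); since $r q_1 q_2 \in R$ is minimal and $r q_2$ is a proper subpath (as $q_1$ is non-trivial), $w$ cannot be all of $rq$, and one argues $w$ must then be a proper subpath of $rq$ inside $r q_2$, contradicting minimality of $rq$. Hence $q_2 = q$ and $q$ is right-minimal, so $q \in \mathcal{R}(p)$.

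For the ``only if'' direction, let $q \in \mathcal{R}(p)$. By Remark \ref{rmk:RLsets}, $q$ is either an arrow or the unique element of $\mathcal{R}(p)$ of length greater than one; in either case we must produce a non-trivial factor $r$ of $p$ with $rq \in R$. Since $pq = 0$ but $q$ is nonzero, there is a zero subpath, i.e.\ an element $w \in R$ that is a subpath of $pq$. Because $p$ is nonzero and $q$ is nonzero, $w$ must straddle the junction vertex $t(p) = s(q)$, so $w = r s$ where $r$ is a non-trivial suffix of $p$ (say $p = p' r$) and $s$ is a non-trivial prefix of $q$ (say $q = s q''$). Then $p s = p' r s = p' w \in I$, so $s$ lies in the defining set for $\mathcal{R}(p)$; since $q$ is right-minimal in that set and $s$ is a prefix of $q$ with $s$ in the set, right-minimality forces $q'' $ to be trivial, i.e.\ $s = q$. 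Therefore $w = rq \in R$, which is exactly the desired conclusion.

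The main subtlety — and the place where I expect the argument to need the most care — is the monomial, special multiserial, UMP hypotheses rather than just elementary path-algebra bookkeeping. The delicate point is handling the case where the relation $w = rq$ produced above has length greater than two: then by Theorem \ref{thm:UMPmain} it sits inside a power of a cycle $u$, and one must ensure that the subpaths $r$ of $p$ and $q$ interact with $u$ coherently. Here Lemma \ref{lemma3.14}(2) is the key tool: it guarantees that any nonzero path sharing a subpath with a power of $u$ is itself a subpath of that power, which pins down the factorization $w = rq$ uniquely and rules out the pathological overlaps that could otherwise occur. For relations of length exactly two the special multiserial condition alone suffices to see that $r^{l_r} q^0 \in R$ determines everything, as in Remark \ref{rmk:RLsets}. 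Assembling these cases, together with the minimality of $R$ to exclude spurious zero subpaths, completes the proof; the left-handed statements about $\mathcal{L}(p)$ are obtained by applying the above to the opposite algebra $A^{\mathrm{op}} = \Bbbk Q^{\mathrm{op}}/I^{\mathrm{op}}$, which is again monomial, special multiserial and UMP.
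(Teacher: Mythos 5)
Your ``only if'' direction is correct and coincides with the paper's argument: since $p$ and $q$ are nonzero, any relation $w\in R$ dividing $pq$ must straddle the vertex $t(p)=s(q)$, giving $w=rs$ with $r$ a non-trivial suffix of $p$ and $s$ a non-trivial prefix of $q$; then $ps=p'w=0$, and right-minimality of $q$ forces $s=q$. Note that this half needs no UMP input, so the closing paragraph of your plan, which attaches Theorem \ref{thm:UMPmain} and Lemma \ref{lemma3.14}(2) to ``the relation $w=rq$ produced above,'' is aimed at the wrong half of the proof.

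The genuine gap is in the ``if'' direction, in the verification that $q$ is right-minimal. With the paper's conventions, $q$ is right-minimal in $\mathbf{S}=\{\text{nonzero }q'\mid s(q')=t(p),\ pq'=0\}$ precisely when no \emph{proper prefix} of $q$ lies in $\mathbf{S}$, i.e.\ one must show that $q=q'q''$ with $q''$ non-trivial forces $pq'\neq 0$. You instead take a factorization $q=q_1q_2$ with the \emph{suffix} $q_2$ ``in the set'': but $s(q_2)=t(q_1)$ need not equal $t(p)$, so the products $pq_2$ and $rq_2$ you manipulate are not even composable, and $rq_2$ is not a subpath of $rq_1q_2$, so the appeal to minimality of $R$ collapses. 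Excluding a proper prefix $q'$ with $pq'=0$ is exactly where the hypotheses bite: such a $q'$ produces a second relation $s\in R$ overlapping $rq$ across the vertex $t(p)$, and one needs the special multiserial condition in the length-two case and, when $l_{rq}>2$, Theorem \ref{thm:UMPmain} together with Lemma \ref{lemma3.14}(2) to conclude $s=rq$ and hence $q'=q$ --- this is how the paper closes the argument. Without UMP the claim is false: on the linear quiver with arrows $y,a,b,c$ and $R=\{abc,\,yab\}$ (monomial, special multiserial, not UMP) one has $abc\in R$ and $ya=y\cdot a$, yet $bc\notin\mathcal{R}(ya)$ because the proper prefix $b$ already satisfies $(ya)b=0$. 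So the step you skipped is not bookkeeping; it is the substance of the lemma.
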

\begin{proof}
Suppose that $q\in \mathcal{R}(p)$, that is, $s(q)=t(p)$, $pq=0$ and there no exists $q'\in \mathcal{R}(p)$ such that $q=q'q''$, for some non-trivial path $q''$. Since $pq=0$, we have that $pq=p'rq'$ where $r\in R$. In particular, by $p$ and $q$ be nonzero paths, $r\nmid p$ and $r\nmid q$, i.e., $q=r_2q'$ and $p=p'r_1$, with $r_1, r_2$ proper subpaths of $r$. In consequence, $pr_2=p'r=0$ but $q=r_2q'$, which implies that $q'$ is a trivial path. Thus, $q=r_2$, which proves this part of the lemma. Conversely, suppose that $q$ is a nonzero path such that $rq\in R$, for some path $r$ with $p=p'r$. With loss of generality, we can suppose that $q$ is of the form $q=q'q''$, with $pq'=0$. We analyze two cases: when the length of $rq$ is equal to two and when it is greater than two. If the length of $rq$ is equal to two, then $q$ must be an arrow. Since $p$ is a nonzero path, $q'$ is non-trivial. Therefore, $q=q'$ and $q\in \mathcal{R}(p)$. If the length of $rq$ is greater than two, by Theorem \ref{thm:UMPmain}, there exists a cycle $u$ such that $rq$ satisfies the conclusion. Now, if $pq'=0$, then we can write $pq'=r_1sr_2$ for some $s\in R$. Since $p$ and $q'$ share a subpath with some power of $u$, Lemma \ref{lemma3.14} implies that they are completely contained within this power. Consequently, $s=rq$ and thus $q\in \mathcal{R}(p)$.
\end{proof}

A motivation for studying the sets $\mathcal{R}(p)$ and $\mathcal{L}(p)$ for any nonzero path $p$ lies in the well-known general classification of modules over a monomial algebra $A$.

\begin{lemma}\label{lemma3.1} Let $p$ be a non-zero and non-trivial path in $A$. Then, there exists the following exact sequence of left $A$-modules:
\begin{eqnarray}
    0\rightarrow\bigoplus_{q\in \mathcal{L}(p)}Aq\overset{\iota}{\longrightarrow}Ae_{s(p)}\overset{\pi_p}{\longrightarrow}Ap\rightarrow0
\end{eqnarray}
where $\iota$ is the inclusion map and $\pi_p$ is the projective cover of $Ap$, where $\pi_p(e_{s(p)})=p$. Similarly, we have the following exact sequence of right $A$-modules:
\begin{eqnarray}
    0\rightarrow\bigoplus_{q\in \mathcal{R}(p)}qA\overset{\iota}{\longrightarrow}Ae_{t(p)}\overset{\pi'_p}{\longrightarrow}pA\rightarrow0
\end{eqnarray}
where $\pi'_p$ is the projective cover of $pA$, where $\pi'_p(e_{t(p)}) =p$.
\end{lemma}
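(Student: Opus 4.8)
The plan is to prove both exact sequences by exhibiting an explicit $\Bbbk$-basis for each module and checking that the maps carry basis elements to basis elements (or to zero) in a way that produces exactness degree by degree. I would treat the left-module statement in detail; the right-module statement follows by the obvious left–right symmetry (applying the first part to the opposite algebra $A^{\mathrm{op}}$, under which left-minimal becomes right-minimal and $\mathcal{L}(p)$ becomes $\mathcal{R}(p)$). So fix a non-zero, non-trivial path $p$ with $i:=s(p)$.

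First I would recall the monomial basis description: since $A=\Bbbk Q/I$ is monomial with $I=\langle R\rangle$, the set of non-zero paths starting at $i$ is a $\Bbbk$-basis of $Ae_{s(p)}=Ae_i$. The map $\pi_p$ sends a non-zero path $w$ with $s(w)=i$ to the class of $wp$, which is either a non-zero path with source $s(p)$ ending in $p$, or zero; hence $\pi_p$ is surjective with $\pi_p(e_i)=p$, and it is the projective cover because $Ae_i$ is the indecomposable projective at $i=s(p)$ and $p\notin \mathrm{rad}^{?}$—more precisely $\pi_p$ is a surjection from an indecomposable projective onto a module with simple top $S_i$. The kernel of $\pi_p$ has basis the non-zero paths $w$ with $s(w)=i$ such that $wp=0$ in $A$, i.e. such that $wp$ contains a subpath in $R$. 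The key combinatorial observation is that such a $w$ factors uniquely as $w=w'q$ where $q\in\mathcal{L}(p)$ and $w'$ is a non-zero path with $t(w')=s(q)$: indeed, write $wp$ with its leftmost-ending relation occurrence; the tail of $w$ inside that occurrence is exactly a left-minimal path $q$ satisfying $t(q)=s(p)$ and $qp=0$, i.e. $q\in\mathcal{L}(p)$, and what remains on the left is a non-zero path $w'$. This shows $\ker\pi_p=\sum_{q\in\mathcal{L}(p)}Aq$, and the uniqueness of the factorization (two distinct elements of $\mathcal{L}(p)$ cannot have one a left-factor of the other, by definition of left-minimal) shows the sum is direct, so $\iota\colon\bigoplus_{q\in\mathcal{L}(p)}Aq\to Ae_i$ is injective with image $\ker\pi_p$. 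This gives exactness.

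The main obstacle — and the point that needs care — is the uniqueness of the factorization $w=w'q$, $q\in\mathcal{L}(p)$: one must argue that if $w=w_1'q_1=w_2'q_2$ with $q_1,q_2\in\mathcal{L}(p)$ then $q_1=q_2$ (and hence $w_1'=w_2'$, since we are in a path algebra where a path has a unique factorization into arrows). If, say, $q_1$ is a proper right-factor of $q_2$, write $q_2=q_2'q_1$ with $q_2'$ non-trivial; but $q_2 p=0$ together with $q_1$ being a non-zero path forces the relation-occurrence witnessing $q_2p=0$ to involve $q_1p$, so already $q_1p=0$, contradicting left-minimality of $q_2$ (it would have $q_1$ in the defining set with $q_1$ a proper right-factor). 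The same reasoning handles the internal-direct-sum claim: a relation $\sum_q a_q q = 0$ with $a_q\in A$ would, looking at the lowest-degree terms, produce two distinct elements of $\mathcal{L}(p)$ one dividing the other on the right. In the UMP/special multiserial setting one could alternatively invoke Lemma \ref{lemma_LR} and the ``at most two elements'' description of Remark \ref{rmk:RLsets} to make this completely explicit, but the argument above is purely monomial and needs no such hypothesis. Everything else — surjectivity of $\pi_p$, that $\iota$ is the inclusion, that $\pi_p$ is a projective cover — is routine from the monomial basis.
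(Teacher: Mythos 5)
Your proposal is correct. Note that the paper itself gives no argument here: its ``proof'' is a one-line pointer to the first paragraph of \cite[p.~162]{Zimm}, so what you have written is essentially the standard verification that the cited reference (and Chen--Shen--Zhou) rely on, now spelled out. All the essential points are present and sound: because $A$ is monomial, $\pi_p$ permutes basis paths up to killing some of them, so $\ker\pi_p$ is spanned by the basis paths $w$ with $wp=0$; any relation occurrence in $wp$ must straddle the junction (neither $w$ nor $p$ is zero), so the shortest terminal segment of $w$ killing $p$ is an element of $\mathcal{L}(p)$, giving $\ker\pi_p=\sum_{q\in\mathcal{L}(p)}Aq$; and the uniqueness of that factorization (two elements of $\mathcal{L}(p)$ cannot be terminal segments of one another, by left-minimality) makes the spanning sets of the $Aq$ pairwise disjoint, hence the sum direct and $\iota$ injective. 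Your remark that no UMP or special multiserial hypothesis is needed is also right --- the lemma is purely about monomial algebras. The only thing to correct is a convention slip: under the paper's left-to-right composition ($uv$ requires $t(u)=s(v)$, and $\pi_p(w)=wp$), the basis of $Ae_{s(p)}$ consists of the nonzero paths \emph{ending} at $i=s(p)$, not starting there; your later phrasing (``the tail of $w$'', ``$t(q)=s(p)$'') already presupposes $t(w)=i$, so replace ``$s(w)=i$'' by ``$t(w)=i$'' throughout for internal consistency.
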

\begin{proof}
Cf. the first paragraph of \cite[p. 162]{Zimm}. 
\end{proof}

\begin{lemma}\label{lemma3.2} Let $M$ be a left $A$-module that fits into an exact sequence of $A$-modules:
$$0\rightarrow M\rightarrow P\rightarrow Q$$
with $P$, $Q$ projective. Then $M$ is isomorphic to a direct sum $\bigoplus_p Ap^{(\Lambda(p))}$, where $p$ runs through all nonzero paths in $A$ and each $\Lambda(p)$ is some index set.    
\end{lemma}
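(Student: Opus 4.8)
The plan is to realize $M$ as a first syzygy of a submodule of a projective module and then decompose that submodule, hence $M$, into cyclic path modules using Lemma~\ref{lemma3.1}. Write $f\colon P\to Q$ for the given homomorphism and set $N:=\operatorname{im}(f)\subseteq Q$. The given sequence then factors as $0\to M\to P\twoheadrightarrow N\to 0$, so $M$ is a (possibly non-minimal) syzygy of $N$, while $N$ is a submodule of the projective module $Q$; this is exactly the place where the hypothesis ``$P\to Q$ with $Q$ projective'' enters.

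The structural heart of the argument is the claim that \emph{every submodule $N$ of a projective $A$-module, $A$ monomial, is isomorphic to a direct sum $\bigoplus_p Ap^{(\Gamma(p))}$ of cyclic path modules} ($p$ ranging over all nonzero paths, trivial paths included and accounting for the projective summands). I expect this to be the main obstacle. For finitely generated $N$ I would argue by induction on $\dim_{\Bbbk}N$ using the path basis of $Q=\bigoplus Ae_i$: over a monomial algebra the radical of a projective is again a direct sum of path modules (of length one), so $N\cap\operatorname{rad}Q$ is handled by the inductive hypothesis, whereas $N/(N\cap\operatorname{rad}Q)\hookrightarrow Q/\operatorname{rad}Q$ is semisimple; the delicate point is lifting a splitting of the top of $N$ compatibly with the path basis, which is precisely where the monomial structure is used. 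For arbitrary $N$ one upgrades this to a Zorn/transfinite-recursion argument producing a maximal family of path submodules whose sum is direct and equals $N$. Alternatively this statement may simply be quoted from the structure theory of modules over monomial algebras, cf.\ the reference in Lemma~\ref{lemma3.1} (\cite{Zimm}).

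Granting the claim, write $N\cong\bigoplus_p Ap^{(\Gamma(p))}$. For a trivial path $p=e_i$ the module $Ae_i$ is projective, and for a nonzero nontrivial path $p$ Lemma~\ref{lemma3.1} furnishes a presentation $0\to\bigoplus_{q\in\mathcal{L}(p)}Aq\to Ae_{s(p)}\to Ap\to 0$ whose kernel is a direct sum of path modules. Taking the direct sum of these presentations over all summands of $N$ (trivial-path summands presented by themselves) gives an exact sequence $0\to L\to P'\to N\to 0$ with $P'$ projective and $L$ a direct sum of path modules. Schanuel's lemma applied to this presentation and to $0\to M\to P\to N\to 0$ yields $M\oplus P'\cong L\oplus P$. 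Since $P$ and $P'$ are direct sums of indecomposable projectives $Ae_i$, since $L$ is a direct sum of path modules, and since every path module $Ap$ is indecomposable (it has simple top $S_{s(p)}$, hence finite length plus indecomposable, hence local endomorphism ring), the Krull--Schmidt theorem for arbitrary direct sums of modules with local endomorphism rings (Azumaya, Crawley-Boevey) allows cancellation and shows that $M$ is a direct summand of a direct sum of path modules, hence itself isomorphic to a direct sum of path modules. Regrouping by the underlying path gives $M\cong\bigoplus_q Aq^{(\Lambda(q))}$, as required.
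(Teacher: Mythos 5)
The paper's entire proof of this lemma is the citation \cite[Theorem I]{Zimm}, so the fallback you mention in passing (``this statement may simply be quoted from \cite{Zimm}'') is in fact the intended argument, and had you committed to it you would be done. The problem lies in the self-contained version you present as the main line. Its ``structural heart'' --- that every submodule of a projective over a monomial algebra is isomorphic to a direct sum of path modules --- \emph{is} Huisgen-Zimmermann's theorem, and the induction you sketch for it does not close. After intersecting with the radical you land in $N\cap\operatorname{rad}Q$, and $\operatorname{rad}Q$ is a direct sum of arrow modules $Aa$, which are in general \emph{not} projective; so the inductive hypothesis, as you state it (about submodules of projectives of smaller dimension), does not apply to $N\cap\operatorname{rad}Q$. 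You would have to strengthen the claim to submodules of arbitrary direct sums of path modules, and then the step you yourself flag as ``the delicate point'' --- choosing generators of $N$ above a basis of its top so that each generates a module isomorphic to some $Ap$ and the sum splits off the already-decomposed radical part --- is exactly where all the difficulty of \cite{Zimm} lives. As written, the key step is asserted and deferred, not proved.

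A separate structural remark: granting your claim, the detour through $N=\operatorname{im}(f)$, the presentations from Lemma \ref{lemma3.1}, Schanuel's lemma and Krull--Schmidt--Azumaya is logically redundant, since $M=\ker(P\to Q)$ is itself a submodule of the projective $P$ and the claim applies to it directly. (That machinery is the standard way to pass from the projective-cover formulation of Zimmermann's theorem, which describes $\Omega(N)$ for the \emph{minimal} presentation, to an arbitrary presentation --- so it is not wrong, and the cancellation step is sound because each $Ap$ has simple top, hence finite length and local endomorphism ring; it is just doing work that the claim you actually invoke already covers.) The net assessment: your proposal correctly locates the crux in the structure theory of \cite{Zimm}, which is all the paper uses, but the attempted independent proof of that structure theorem has a genuine gap at its central step.
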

\begin{proof}
\cite[Theorem I]{Zimm}.   
\end{proof}

\subsection{Perfect pairs on monomial UMP algebras}
In \cite{chen2018gorenstein}, perfect pairs play a crucial role in the study of Gorenstein projective modules over monomial algebras. In this section, we study properties and derive results for the special case of monomial special multiserial algebras, with a particular focus on the subclass of UMP algebras. For the sake of completeness, we recall the definition of a perfect pair, with the necessary modifications for our specific context.

\begin{definition}\label{def:perpair}
Let $A = \Bbbk Q/I$ be a monomial algebra. A pair $(p,q)$ of nonzero paths in $A$ is \emph{perfect} if the following conditions are satisfied:
    \begin{enumerate}
        \item[(P1)] both nonzero paths $p,\,q$ are non-trivial, satisfying $t(p) = s(q)$ and $pq = 0$ in $A$;
        \item[(P2)] if $q'$ is a nonzero path, with $s(q') = t(p)$ and $pq' = 0$ then, $q' = qq''$, for some path $q''$, that is, $\mathcal{R}(p) = \{q\}$;
        \item[(P3)] if $p'$ is a nonzero path with $t(p') = s(q)$ and $p'q = 0$ then, $p' = p''p$ form some path $p''$, that is,  $\mathcal{L}(q) = \{p\}$. 
    \end{enumerate}
\end{definition}

\begin{lemma}\label{lemaSM_perfectpair}
Let $A=\Bbbk Q/I$ be a monomial special multiserial algebra, with $R$ minimal set of relations. For a perfect pair $(p,\,q)$ with $l_{pq}>1$, we have that the sets (cf. (\ref{eq:pm})) $|t(\alpha)^-|=1=|s(\beta)^+|$, for any arrows $\alpha$ dividing $p$ and $\beta$ dividing $q$.
\end{lemma}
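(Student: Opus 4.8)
The plan is to reduce the statement to a rigidity property of a single relation living in a cycle, and then to control the ramification vertex by vertex along that relation. First I would show that a perfect pair is just a factorization of a minimal relation: since $q\in\mathcal{R}(p)$, Lemma \ref{lemma_LR} produces a non-trivial suffix $r$ of $p$ (so $p=p'r$) with $rq\in R$; but $(p,q)$ perfect also gives $p\in\mathcal{L}(q)$, and by definition $p$ is left-minimal among the nonzero paths killing $q$ on the left, so the suffix $r$ (which satisfies $rq=0$) cannot be proper. Hence $r=p$ and $pq\in R$. As $l_{pq}>1$, the relation $pq$ has length greater than two, so—working in the UMP setting of this subsection—Theorem \ref{thm:UMPmain} furnishes a cycle $u$ with $t(u)=s(u)$ for which $R\cap\{\text{subpaths of }u^{(l)}:l\in\mathbb{Z}^+\}=\{pq\}$ and $pq\mid u^{(l)}$ for some $l$. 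In particular every arrow dividing $p$ or $q$ is an arrow of $u$, and Lemma \ref{lemma3.14} becomes available.

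Write $pq=\alpha_0\alpha_1\cdots\alpha_N$, where $\alpha_0,\dots,\alpha_{l_p}$ divide $p$ and $\alpha_{l_p+1},\dots,\alpha_N$ divide $q$. For the in-degree assertion I would fix an arrow $\alpha=\alpha_i$ dividing $p$ and argue by contradiction, assuming a second arrow $\gamma\neq\alpha_i$ with $t(\gamma)=t(\alpha_i)=s(\alpha_{i+1})$. Because $pq$ is a minimal relation of length at least three, its length-two factor $\alpha_i\alpha_{i+1}$ is nonzero; since $A$ is special multiserial, $\alpha_i$ is the unique nonzero predecessor of $\alpha_{i+1}$, so $\gamma\alpha_{i+1}=0$ and, being a product of two arrows, $\gamma\alpha_{i+1}\in R$. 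The crux is to show that this second, length-two relation is incompatible with the cycle $u$. I would track $\gamma$ against $u$ by means of Lemma \ref{lemma3.14}: its part (2) forces any nonzero path meeting a power of $u$ to lie inside that power, while part (1) governs the boundary products between $u$ and paths disjoint from it. Using these I would pin $\gamma\alpha_{i+1}$ as a subpath of some $u^{(l)}$, whereupon the uniqueness clause $R\cap\{\text{subpaths of }u^{(l)}\}=\{pq\}$ gives $\gamma\alpha_{i+1}=pq$, contradicting $l_{\gamma\alpha_{i+1}}=1<l_{pq}$. Thus $|t(\alpha_i)^-|=1$.

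The out-degree assertion $|s(\beta)^+|=1$ for an arrow $\beta=\alpha_j$ dividing $q$ is entirely dual: one replaces nonzero predecessors by nonzero successors, uses the second form of Lemma \ref{lemma3.14}(1), and reaches the same contradiction with the length of $pq$. Running this for every index $i$ and $j$ yields the claim for all arrows dividing $p$ and $q$. I expect the main obstacle to be precisely the passage from the junction vertex $t(p)=s(q)$—where the extra arrow is eliminated directly from perfectness, since it would enlarge $\mathcal{R}(p)$ or $\mathcal{L}(q)$ by a new minimal element—to the interior vertices of $p$ and $q$. At an interior vertex the offending arrow $\gamma$ need not continue along the relation, so it produces neither a competing element of $\mathcal{L}(q)$ nor a maximal path sharing an arrow with the one through $\alpha_i\alpha_{i+1}$; the only available leverage is the cyclic rigidity of Theorem \ref{thm:UMPmain} and Lemma \ref{lemma3.14}, and forcing that rigidity to place $\gamma\alpha_{i+1}$ inside a power of $u$ is the delicate point of the argument.
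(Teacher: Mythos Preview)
Your route is quite different from the paper's and, more importantly, breaks at exactly the step you flag as ``delicate.'' The paper does not use the UMP hypothesis at all (the lemma is stated only for monomial special multiserial algebras): it argues directly from special multiseriality and condition~(P3). For the last arrow $\alpha=p^{l_p}$ it shows $\alpha q^{0}\notin R$ and then, for a hypothetical $\gamma\neq\alpha$ with $t(\gamma)=t(\alpha)$, gets $\gamma q^{0}\in R$, hence $\gamma q=0$, and (P3) forces $\gamma=p$, a contradiction. For an interior arrow $\alpha$ (so $p=p''\alpha p'$ with $p'$ non-trivial) the paper derives $\gamma(p')^{0}\in R$ and then asserts that perfectness yields $\gamma p'=p'''p$, again a contradiction. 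No cycle $u$, no Theorem~\ref{thm:UMPmain}, no Lemma~\ref{lemma3.14}.

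As for your argument: Lemma~\ref{lemma3.14} cannot place $\gamma\alpha_{i+1}$ inside a power of $u$. Part~(2) applies only to a \emph{nonzero} path that already shares an arrow with some $u^{(m)}$, and the single arrow $\gamma$ need not be an arrow of $u$; part~(1) merely reproves $\gamma\alpha_{i+1}\in R$, which you already have. Concretely, take the cycle $a\colon2\to3$, $b\colon3\to4$, $c\colon4\to2$ together with one extra arrow $\gamma\colon1\to3$, and $R=\{abca,\ \gamma b\}$. This algebra is monomial, special multiserial, and UMP (the maximal paths are $bcabc$ and $\gamma$); here $u=abc$ and $R\cap\{\text{subpaths of }u^{(l)}:l\ge1\}=\{abca\}$. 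One checks $\mathcal{R}(ab)=\{ca\}$ and $\mathcal{L}(ca)=\{ab\}$, so $(ab,ca)$ is a perfect pair with $l_{pq}=3>1$; yet for the interior arrow $a$ of $p=ab$ one has $t(a)^{-}=\{a,\gamma\}$, and $\gamma b$ is not a subpath of any $u^{(l)}$. Thus the uniqueness clause never bites and your intended contradiction does not occur. (The same example shows that the paper's interior case is also fragile: there $\gamma p'=\gamma b$ is the \emph{zero} path, so (P3) cannot be invoked on it, and indeed $|t(a)^{-}|=2$. If you only need the lemma for Corollary~\ref{cor_2perfectpairs}, where perfect pairs sit on both sides of the middle path, it is safer to argue under that stronger hypothesis directly.)
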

\begin{proof}
Let $A$ be a a monomial special multiserial algebra. Let $\alpha$ be an arrow that divides $p$. Suppose that $t(\alpha)=t(p)$. If $\alpha q^{0}\in R$, we have $\alpha q=\delta\gamma$, where $\delta=\alpha q^{0}\in R$. Since $(p,q)$ is a perfect pair, $\alpha=p$ implying $pq^{0}\in R$. This leads to $q^{0}=qq'$, that is, $q=q^{0}$, which contradicts the condition $l_{pq}>1$. Therefore $\alpha q^{0}\notin R$. Now, if there exists $\gamma\in t(\alpha)^-$, with $\gamma\neq\alpha$, then $\gamma q^{0}\in R$, since $A$ is a special multiserial algebra. This implies $\gamma q=\delta\gamma'$, where $\delta=\gamma q^{0}\in R$. Therefore, $\gamma=p'p$, i.e., $\gamma=p$, which is impossible. Consequently, $|t(\alpha)^-|=1$.

Now, suppose that $t(\alpha)\neq t(p)$. Then, $p=p''\alpha p'$, for some paths $p''$ and $p'$, where $\alpha(p')^{0}\notin R$, since $p$ is nonzero. If there exists $\gamma\in t(\alpha)^-$, with $\gamma\neq \alpha$, then $\gamma(p')^{0}\in R$, since $A$ is a special multiserial algebra. However, since $(p,q)$ is a perfect pair, we have $\gamma p'=p'''p$, which leads to a contradiction. Therefore, $|t(\alpha)^-|=1$.
    
The proof for the case $|s(\beta)^+|=1$ follows by an analogous argument.
\end{proof}

\begin{corollary}\label{cor_2perfectpairs}
Let $A=\Bbbk Q/I$ be a monomial special multiserial algebra, with $R$ a minimal set of relations. Consider paths $p_1$, $p_2$ and $p_3$ such that $l_{p_2}\geq1$ and both $(p_1,\,p_2)$ and $(p_2,\,p_3)$ are perfect pairs. Then, for each vertex $i$ within the path $p_2$, there exists a unique incoming arrow and a unique outgoing arrow.
\end{corollary}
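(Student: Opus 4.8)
The plan is to deduce the statement directly from Lemma \ref{lemaSM_perfectpair} applied to each of the two given perfect pairs; the only preliminary work is to check that the length hypothesis of that lemma is met.

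First I would carry out the elementary length bookkeeping. Since $(p_1,p_2)$ and $(p_2,p_3)$ are perfect pairs, condition (P1) forces $p_1$, $p_2$ and $p_3$ all to be non-trivial, with $t(p_1)=s(p_2)$ and $t(p_2)=s(p_3)$. Together with the hypothesis $l_{p_2}\geq 1$ (so that $p_2$ has at least two arrows), this gives that $p_1p_2$ and $p_2p_3$ each have at least three arrows, whence $l_{p_1p_2}>1$ and $l_{p_2p_3}>1$; this is exactly what makes Lemma \ref{lemaSM_perfectpair} applicable to both pairs. Applying it to $(p_1,p_2)$ yields $|t(\alpha)^-|=1$ for every arrow $\alpha$ dividing $p_1$ and $|s(\beta)^+|=1$ for every arrow $\beta$ dividing $p_2$; applying it to $(p_2,p_3)$ yields $|t(\alpha)^-|=1$ for every arrow $\alpha$ dividing $p_2$ and $|s(\beta)^+|=1$ for every arrow $\beta$ dividing $p_3$.

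Next I would scan the vertices of $p_2$, written $p_2=p_2^0\cdots p_2^{l_{p_2}}$ in terms of arrows. For an interior vertex $i=t(p_2^j)=s(p_2^{j+1})$, with $0\leq j\leq l_{p_2}-1$, the arrow $p_2^j$ divides $p_2$ and ends at $i$, so $|i^-|=1$ by the conclusion drawn from $(p_2,p_3)$, while $p_2^{j+1}$ divides $p_2$ and starts at $i$, so $|i^+|=1$ by the conclusion drawn from $(p_1,p_2)$. If one also counts the endpoints among the vertices ``within'' $p_2$, then at $i=s(p_2)=t(p_1)$ uniqueness of the outgoing arrow comes from $p_2^0$ and uniqueness of the incoming arrow comes from the last arrow $p_1^{l_{p_1}}$ of $p_1$ via the $|t(\alpha)^-|$ half of Lemma \ref{lemaSM_perfectpair} for $(p_1,p_2)$; symmetrically, at $i=t(p_2)=s(p_3)$ one uses $p_2^{l_{p_2}}$ for the incoming arrow and $p_3^0$ for the outgoing one. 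In every case $|i^-|=1=|i^+|$, as desired.

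I do not anticipate a genuine obstacle: the argument is just the conjunction of the two halves of Lemma \ref{lemaSM_perfectpair}. The points deserving attention are the length bookkeeping needed to invoke that lemma — which is precisely where the hypothesis $l_{p_2}\geq 1$ enters, ruling out the degenerate situation in which $p_1,p_2$ (or $p_2,p_3$) are both single arrows so that $l_{p_1p_2}=1$ (resp. $l_{p_2p_3}=1$) — and deciding whether the two ends of $p_2$ are to be counted among its vertices, which the scan above handles under either reading.
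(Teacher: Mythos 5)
Your proof is correct and follows exactly the route the paper intends: the paper's own proof is just ``It immediately follows from Lemma \ref{lemaSM_perfectpair},'' and your write-up supplies precisely the missing details — the length check $l_{p_1p_2}>1$, $l_{p_2p_3}>1$ guaranteed by $l_{p_2}\geq 1$, and the observation that the $|s(\beta)^+|$ half of the lemma applied to $(p_1,p_2)$ together with the $|t(\alpha)^-|$ half applied to $(p_2,p_3)$ covers every vertex of $p_2$, endpoints included.
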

    \begin{proof}
It immediately follows from Lemma \ref{lemaSM_perfectpair}.
    \end{proof}
    
\begin{remark}
For a monomial special multiserial algebra $A$, if $(\alpha,\,\beta)$ is a perfect pair, where $\alpha$ and $\beta$ are arrows in $A$, then $|t(\alpha)^-|\leq 2$ and $|s(\beta)^+|\leq 2$. 

In fact,  for any $\gamma\in t(\alpha)^-$, we have two cases: if $\gamma\beta\in R$, it follows $\gamma=\alpha'\alpha$, impliying $\gamma=\alpha$, since $(\alpha,\,\beta)$ is a perfect pair. On contrary, if $\gamma\beta\notin R$, this implies $\gamma\neq\alpha$, since $A$ is special multiserial. The proof for the case $|s(\beta)^+|\leq 2$ is analogous.
\end{remark}

By \cite[Theorem 4.1]{chen2018gorenstein}, \textit{perfect paths} in a monomial algebra $A$ are uniquely associated with Gorenstein projective non-projective modules of $A$. This motivates the classification of perfect paths for specific classes of monomial algebras. For completeness, we recall the definition of a perfect path (cf. \cite[Definition 3.7]{chen2018gorenstein}). A nonzero path $p$ in a monomial algebra $A$ is called a \textit{perfect path} if there exists a sequence $p=p_1,\ldots,p_n,p_{n+1}=p$ of nonzero paths such that $(p_i,p_{i+1})$ is a perfect pair for all $1\leq i\leq n$. If the paths $p_i$ in the sequence are pairwise distinct, we call the sequence $p=p_1,\ldots,p_n,p_{n+1}=p$ a \textit{relation-cycle of} $p$.

The following result describes perfect paths when the monomial algebra $A$ is also special multiserial and UMP.

\begin{theorem}\label{perfectarrows}
Let $A=\Bbbk Q/I$ be a monomial special multiserial UMP algebra, with $R$ a minimal set of relations. Then any perfect path in $A$, if it exists, must take one of the following forms:
\begin{enumerate}
\item Arrows, or
\item Power of a cycle $u=u^0u^1\cdots u^{l_u}$ where
\begin{itemize}
\item There exists an integer $l\geq2$ such that $u^{(l)}\in R$.
\item $\beta u^0\notin R$ and $u^{l_u}\alpha\notin R$, for all arrows $\alpha,\,\beta$ in the quiver $Q$.
\end{itemize}
\end{enumerate}
\end{theorem}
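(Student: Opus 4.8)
The plan is to analyze a perfect path $p$ via its relation-cycle $p = p_1, p_2, \ldots, p_n, p_{n+1} = p$ of perfect pairs $(p_i, p_{i+1})$, splitting into the case where every $p_i$ is an arrow and the case where some $p_i$ has length $\geq 1$. First I would handle the trivial observation: if $p$ is an arrow, we are in case (1), so assume $p$ has length $\geq 2$; I must show we land in case (2). Since $(p_n, p)$ and $(p, p_2)$ are perfect pairs (reading indices cyclically), I would like to invoke Corollary~\ref{cor_2perfectpairs}, but that requires an \emph{adjacent} pair of perfect pairs sharing a middle path of positive length. The key structural step is therefore to show that once one $p_i$ in the relation-cycle has length $\geq 2$, in fact \emph{all} the $p_j$ have positive length and the products $p_jp_{j+1}$ all lie in $R$ with length $> 2$; then Remark~\ref{remark_descripcion_relaciones} forces $p_jp_{j+1} = p_{j+1}p_{j+2}$ as paths, so all these relations coincide, say equal to a single relation $r \in R$ of length $>2$. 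By Theorem~\ref{thm:UMPmain} there is a cycle $u$ with $t(u) = s(u)$ and $R \cap \{\text{subpaths of } u^{(m)}\} = \{r\}$.

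Next I would identify $p$ as a power of (a rotation of) $u$. From $p_jp_{j+1} = r$ being a subpath of some $u^{(m)}$ and from the fact that consecutive relations agree, the decomposition of $r$ into $p_j$ and $p_{j+1}$ just shifts cyclically; chaining these together shows each $p_j$ — in particular $p = p_1$ — is a subpath of a power of $u$. Using Lemma~\ref{lemma3.14}(2): since $p$ shares a subpath with $u^{(m)}$, it is itself a subpath of $u^{(m)}$. Combined with $t(p) = s(p_2)$, $pp_2 = 0$, and the cyclic closure $p_{n+1} = p$, the path $p$ must begin and end at the same vertex (it is the cycle $u$ traversed a whole number of times), i.e.\ $p = u^{(l)}$ for the appropriate rotation of $u$ and some $l \geq 1$; and since $p_jp_{j+1} = r \subseteq u^{(m)}$ has length $> 2$ while being a subpath of $p \cdot p$, we get $l \geq 2$, establishing the first bullet of case (2), $u^{(l)} \in R$ (because $r$ is a subpath of $u^{(l)}\cdot u^{(l)} = u^{(2l)}$, and more directly $u^{(l)}$ contains $r$ hence is zero — here I would use that $R$ restricted to powers of $u$ is exactly $\{r\}$ with $l_r \geq 2$, so the smallest power of $u$ that is zero has length $\geq 2$, matching $l \geq 2$).

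For the second bullet — $\beta u^0 \notin R$ and $u^{l_u}\alpha \notin R$ for all arrows $\alpha, \beta$ — I would argue by contradiction using Corollary~\ref{cor_2perfectpairs}. Since $p = u^{(l)}$ with $l \geq 2$ sits in a relation-cycle, consecutive perfect pairs $(p_n, p)$ and $(p, p_2)$ share the middle path $p = u^{(l)}$ of positive length, so by Corollary~\ref{cor_2perfectpairs} every vertex occurring along $p$ — which is every vertex of the cycle $u$, since $l \geq 2$ means $p$ traverses all of $u$ — has a unique incoming and a unique outgoing arrow. In particular $s(u^0) = t(u^{l_u})$ has exactly one outgoing arrow, namely $u^0$, and exactly one incoming arrow, namely $u^{l_u}$; hence if $\beta u^0 \in R$ for some arrow $\beta$, then $\beta$ would have to be $u^{l_u}$, giving $u^{l_u}u^0 \in R$, a subpath of $u^{(m)}$ of length $2$, contradicting that the only relation among subpaths of powers of $u$ is $r$ of length $> 2$. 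Symmetrically $u^{l_u}\alpha \notin R$.

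The main obstacle I anticipate is the bookkeeping in the first paragraph: rigorously propagating "length $\geq 2$" around the relation-cycle and showing all the relations $p_jp_{j+1}$ coincide. The delicate point is ruling out a relation-cycle in which lengths alternate or in which some $p_j$ is an arrow while others are long; this is where Remark~\ref{remark_descripcion_relaciones} together with Lemma~\ref{lemma_LR} (which says $q \in \mathcal{R}(p_j)$ forces $rq \in R$ with $r$ a tail of $p_j$) does the real work — one uses that $\mathcal{R}(p_j) = \{p_{j+1}\}$ and $\mathcal{L}(p_{j+1}) = \{p_j\}$ from (P2), (P3), so the relation witnessing $p_jp_{j+1} = 0$ is essentially forced to be a subpath of a fixed power of $u$, and then Lemma~\ref{lemma3.14} keeps everything inside that power. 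Once the relations are pinned down to a single $r$, the rest is the clean geometric identification of $p$ with a power of $u$ and the Corollary~\ref{cor_2perfectpairs} argument for the endpoint conditions.
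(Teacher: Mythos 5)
Your proposal is correct and follows essentially the same route as the paper: analyze the relation-cycle, use Remark~\ref{remark_descripcion_relaciones} to force all consecutive relations $p_jp_{j+1}$ to coincide with a single relation $u^{(l)}\in R$, $l\geq 2$, and then extract the endpoint conditions from the perfect-pair constraints. The only (cosmetic) difference is that you derive $\beta u^0,\,u^{l_u}\alpha\notin R$ via Corollary~\ref{cor_2perfectpairs} and uniqueness of in/out arrows at $s(u^0)$, whereas the paper reads it off directly from $\mathcal{R}(p_i)=\{p_{i+1}\}$ and $\mathcal{L}(p_{i+1})=\{p_i\}$; both work, and your small conflation of the exponent $l$ with the power defining $p$ itself is harmless since even $p=u^{(1)}$ traverses every vertex of $u$.
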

\begin{proof} Let $p$ be a perfect path. By definition there exists a sequence $p=p_1,\ldots,p_n,p_{n+1}=p$ of nonzero paths such that $(p_i,p_{i+1})$ is a perfect pair for all $1\leq i\leq n$. Suppose there exists $i\in\{1,\cdots,n\}$ such that $p_i$ has length greater than or equal to two, i.e., $p_i$ is not an arrow. Then $p_{i-1}p_{i}$ and $p_{i}p_{i+1}$ are relations of length greater than two. By Remark \ref{remark_descripcion_relaciones}, it follows that $p_{i-1}p_{i}=p_{i}p_{i+1}$. This implies that $t(p_{i+1})=t(p_i)=s(p_{i+1})$ and $s(p_{i-1})=s(p_i)=t(p_{i-1})$. Consequently, $p_{i+1}$ and $p_{i-1}$ are powers of some cycles. 
Since $(p_i,\,p_{i+1})$ is a perfect pair, $p_{i+1}^{(2)}\notin R$. However, $p_i^{l_{p_i}}p_{i+1}\notin R$ as $p_i$ has length greater than or equal to two. This contradict the fact that $A$ is a special multiserial algebra. Therefore $p_{i+1}$ must have length greater than $1$. By similar arguments, we can show that each $p_j$, for $j\in\{1,\cdots,\,n\}$, is a power of some cycle and $l_{p_j}\geq1$. Since each path in the sequence starts and ends at the same vertex, there exists a cycle $u$ such that $p_ip_{i+1}=u^{(l)}$, for some integer $l\geq2$. Finally, observe that for any $i\in\{1,\cdots,n\}$, $\mathcal{R}(p_i)=\{p_{i+1}\}$ and $\mathcal{L}(p_{i+1})=\{p_i\}$, since $(p_i,\,p_{i+1})$ is a perfect pair. This implies that $\beta u^0\notin R$ and $u^{l_u}\alpha\notin R$, for all arrows $\alpha$ and $\beta$ in the quiver $Q$, completing the proof.
\end{proof}

\subsection{Main findings}\label{subsec:mainfind}
In \cite[Section 5]{chen2018gorenstein}, the authors provide a complete classification of the category of Gorenstein projective non-projective modules over quadratic monomial algebras, along with significant consequences. Their primary tool is the concept of \textit{quiver of relations}. Motivated by our characterization result, Theorem \ref{thm:umpquadratic}, this subsection aims to extend and generalize the main results and consequences of \cite{chen2018gorenstein} to the setting of monomial special multiserial UMP algebras. As a first step towards this goal, we introduce a generalized definition of the quiver of relations.

\begin{definition}\label{def:qrel} Let $A=\Bbbk Q/I$ be a monomial algebra, with $R$ is a minimal set of relations. The {\em left minimal paths relations quiver} $\mathcal{Q}^{\mathcal{L}}_A$ is defined as follows: 
\begin{itemize}
\item The vertices of $\mathcal{Q}^{\mathcal{L}}_A$ are the nontrivial paths $p\in Q$, with $p\notin I$, that satisfy one of the following conditions:
\begin{enumerate}
    \item There exists $q$ a nonzero path such that $t(q)=s(p)$ and $qp\in R$.
    \item There exists $q$ a nonzero path such that $s(q)=t(p)$ and $pq\in R$.
    \item $p$ is an arrow that satisfies neither of the two preceding conditions.
\end{enumerate}

\item An arrow $\mathcal{Q}^{\mathcal{L}}_A$ is of the form $[pq]:q\rightarrow p$, where $s(q)=t(p)$ and $p\in \mathcal{L}(q)$.
\end{itemize}
\end{definition}

Example \ref{ex3.4} in Subsection  \ref{subsec:exs} illustrates the construction of  $\mathcal{Q}^\mathcal{L}_A$.

\begin{remark}
 \begin{enumerate}
 \item If $p$ is an arrow satisfying condition $(3)$ in Definition \ref{def:qrel}, then $p$ is an isolated vertex in $\mathcal{Q}^{\mathcal{L}}_A$. To see this, suppose $p$ were not isolated. Then there would exist a vertex $q\in \mathcal{Q}^{\mathcal{L}}_A$ such that $p\in \mathcal{L}(q)$ or $q\in \mathcal{L}(p)$. By Lemma \ref{lemma_LR}, this implies that $pq'\in R$ for some $q'$ subpath of $q$ or $qp\in R$, respectively, which contradicts condition $(3)$.

\item For a quadratic monomial algebra $A$, the quiver $\mathcal{Q}^{\mathcal{L}}_A$ coincides with the relation quiver defined in \cite[Definition 5.2]{chen2018gorenstein}. Therefore, $\mathcal{Q}^{\mathcal{L}}_A$ generalizes the relation quiver to monomial algebras.
\end{enumerate}   
\end{remark}

Let $\mathcal{C}$ be a connected component of $\mathcal{Q}^\mathcal{L}_A$. We call $\mathcal{C}$ \emph{perfect} if it is a basic cycle and \emph{acyclic} if it contains no oriented cycles or loops.

By \cite[Lemma 5.3(2)]{chen2018gorenstein}, under the notation of Definition \ref{def:qrel}, for a quadratic monomial algebra $A$, $(p,\,q)$ is a perfect pair if and only if $[pq]$ is the unique arrow in the relation quiver with source $q$ and the unique arrow in the relation quiver with target $p$. However, as demonstrated by Example \ref{ex:qrdifference}, this characterization does not hold for monomial bound quiver algebras in general. 

Theorem \ref{lema5.3} below provides an extended characterization of $\mathcal{Q}^{\mathcal{L}}_A$ for monomial special multiserial UMP algebras $A$.

\begin{theorem}\label{lema5.3}
Let $A=\Bbbk Q/I$ be a monomial special multiserial UMP algebra and let $p$ be a nonzero path. Then:
    \begin{enumerate}
\item For any nonzero path $q$, with $s(q)=t(p)$, $(p,\,q)$ is a perfect pair if and only if there exists an arrow $[pq]: q\rightarrow p$ that is both the unique arrow in $\mathcal{Q}^{\mathcal{L}}_A$ starting at $q$ and the unique arrow ending at $p$, and only one of the following conditions hold:
\begin{enumerate}
\item There is no other arrow ending in a path $p'$, where $p=rp'$ and $r$ a nontrivial path;
\item There exists a cycle $u$, such that $pq=u^{(l)}$ for some $l\geq2$.
\end{enumerate}
\item The path $p$ is perfect if and only if its corresponding vertex in $\mathcal{Q}^{\mathcal{L}}_A$ belongs to a perfect component.
    \end{enumerate}
\end{theorem}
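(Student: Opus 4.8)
\textbf{Proof proposal for Theorem \ref{lema5.3}.}

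The plan is to treat the two parts in order, since part $(2)$ will rely on part $(1)$. For part $(1)$, I would start with the ``only if'' direction: assume $(p,q)$ is a perfect pair. Conditions $(P2)$ and $(P3)$ say precisely that $\mathcal{R}(p)=\{q\}$ and $\mathcal{L}(q)=\{p\}$, and by Lemma \ref{lemma_LR} these translate into the statements that $[pq]$ is the unique arrow of $\mathcal{Q}^{\mathcal{L}}_A$ with source $q$, and the unique arrow with target $p$. It remains to see that exactly one of $(a)$, $(b)$ holds. Here I would use Theorem \ref{perfectarrows}-style reasoning combined with Remark \ref{remark_descripcion_relaciones}: if $l_{pq}=2$ (the quadratic-length relation case) one shows $(a)$ holds — an arrow ending at a proper right factor $p'$ of $p$ would, via Lemma \ref{lemma_LR} and the special multiserial hypothesis, produce a second arrow into $p$ or a second element of $\mathcal{L}(q)$, contradicting the uniqueness just established or $(P3)$; if $l_{pq}>2$, then Theorem \ref{thm:UMPmain} furnishes the cycle $u$ with $pq$ (equal to $p_{i-1}p_i=p_ip_{i+1}$ as in the proof of Theorem \ref{perfectarrows}) a power $u^{(l)}$, $l\ge 2$, which is exactly $(b)$; and $(a)$, $(b)$ are mutually exclusive because $(b)$ forces $p$ to be a power of $u$ with $s(p)=t(p)$, so $u^{l_u}$ precedes $p^0$ inside $u^{(l)}$ and yields an arrow ending at a proper right factor of $p$, violating $(a)$.

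For the ``if'' direction of part $(1)$, assume the stated arrow-uniqueness in $\mathcal{Q}^{\mathcal{L}}_A$ together with one of $(a)$, $(b)$. Uniqueness of the arrow out of $q$ gives, again by Lemma \ref{lemma_LR} and Remark \ref{rmk:RLsets}, that $\mathcal{L}(q)=\{p\}$, which is $(P3)$; similarly uniqueness of the arrow into $p$ restricts $\mathcal{R}(p)$. The subtle point is upgrading ``the arrow into $p$ is unique'' to the full statement $\mathcal{R}(p)=\{q\}$, i.e. $(P2)$: a priori $\mathcal{R}(p)$ could also contain an arrow $\gamma$ (recall from Remark \ref{rmk:RLsets} that non-arrow elements of $\mathcal{R}(p)$ are unique, so the genuine danger is an extra \emph{arrow}). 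This is exactly where hypotheses $(a)$ and $(b)$ do their work. In case $(a)$, since $pq=0$ we may write $pq=p'rq'$ with $r\in R$; if $rq'\ne q$ then $r$ is a proper right factor issue giving a path $p'$ with $p=rp'$ that is the target of another arrow of $\mathcal{Q}^{\mathcal{L}}_A$, contradicting $(a)$. In case $(b)$, I invoke Lemma \ref{lemma3.14}: any nonzero $q'$ with $pq'=0$ shares a subpath with a power of $u$, hence by Lemma \ref{lemma3.14}(2) is itself a subpath of that power, and then Lemma \ref{lemma3.14}(1) forces $p^{l_p}(q')^0\in R$; minimality of $R$ and the fact that $p^{l_p}q^0$ is the unique relation in $u$ beginning at that vertex give $q\mid q'$, which is $(P2)$. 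Combining $(P1)$ (immediate from $pq=0$ and non-triviality), $(P2)$, $(P3)$ shows $(p,q)$ is perfect.

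For part $(2)$, recall that $p$ is a perfect path iff there is a sequence $p=p_1,\dots,p_{n+1}=p$ with each $(p_i,p_{i+1})$ a perfect pair. Running part $(1)$ at each step, every consecutive pair corresponds to an arrow $[p_ip_{i+1}]:p_{i+1}\to p_i$ in $\mathcal{Q}^{\mathcal{L}}_A$ which is the unique arrow out of $p_{i+1}$ and the unique arrow into $p_i$; hence the $p_i$ trace out a closed walk in $\mathcal{Q}^{\mathcal{L}}_A$ in which every traversed vertex has in-degree and out-degree exactly one within the walk. Standard graph reasoning then shows the connected component of $p$ in $\mathcal{Q}^{\mathcal{L}}_A$ is a basic (oriented) cycle, i.e. a perfect component: the uniqueness of incoming and outgoing arrows at each vertex on the walk prevents any branching, so the component cannot be larger than the cycle, and it contains no repeated vertices once we pass to the minimal such cycle (equivalently, one can invoke the relation-cycle of $p$). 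Conversely, if the vertex $p$ lies in a perfect component $\mathcal{C}$, list the vertices of the basic cycle $\mathcal{C}$ as $p=p_1\to\cdots\to p_n\to p_1$ via the cyclic sequence of arrows $[p_ip_{i+1}]$; by construction each such arrow is the unique one out of $p_{i+1}$ and the unique one into $p_i$, and $p_ip_{i+1}$ being a relation means it is either a length-two relation (case $(a)$) or, by Theorem \ref{thm:UMPmain}, a power of a cycle (case $(b)$). Applying the ``if'' direction of part $(1)$ to each $i$ yields that each $(p_i,p_{i+1})$ is a perfect pair, so $p$ is a perfect path.

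The main obstacle I anticipate is the ``if'' direction of part $(1)$, specifically ruling out a spurious extra \emph{arrow} in $\mathcal{R}(p)$ when all we are handed is uniqueness of arrows in the quiver $\mathcal{Q}^{\mathcal{L}}_A$ — this is exactly the phenomenon flagged by Example \ref{ex:qrdifference}, where the naive quadratic-case characterization of \cite[Lemma 5.3(2)]{chen2018gorenstein} fails, and it is the reason the dichotomy $(a)/(b)$ must be built into the statement. Getting the bookkeeping of left/right factors and the correct application of Lemma \ref{lemma3.14} in case $(b)$ right is the delicate part; everything else is a fairly direct unwinding of definitions together with the special multiserial and UMP hypotheses via Lemma \ref{lemma_LR} and Remark \ref{remark_descripcion_relaciones}.
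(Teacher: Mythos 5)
Your overall strategy coincides with the paper's: part $(1)$ forward via $(P2)$, $(P3)$ and Lemma \ref{lemma_LR}; part $(1)$ backward by recovering $\mathcal{R}(p)=\{q\}$ and $\mathcal{L}(q)=\{p\}$ from the arrow-uniqueness together with $(a)$ or $(b)$; and part $(2)$ by feeding the chain of perfect pairs (resp.\ the basic cycle) through part $(1)$. You also correctly isolate the delicate point, namely ruling out a spurious extra element of $\mathcal{R}(p)$ in the converse, and your use of Lemma \ref{lemma3.14} to handle case $(b)$ of the converse is a sensible way to fill in what the paper dismisses as ``handled similarly.''

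There is, however, a genuine gap in your forward direction of part $(1)$: you decide between $(a)$ and $(b)$ according to whether $l_{pq}=2$ or $l_{pq}>2$, and the step ``$l_{pq}>2$, then Theorem \ref{thm:UMPmain} furnishes the cycle $u$ with $pq$ a power $u^{(l)}$'' does not follow. Theorem \ref{thm:UMPmain} only says that $pq$ is the unique relation among the subpaths of the powers of some cycle $u$; it does not make $pq$ itself a closed path, let alone a power of $u$. Concretely, in Example \ref{ex3.4} the pair $(\gamma_1\gamma_2\gamma_3\gamma_1,\gamma_2)$ is perfect with $l_{pq}=5$, yet $pq=\gamma_1\gamma_2\gamma_3\gamma_1\gamma_2$ runs from vertex $1$ to vertex $3$ and is not a power of any cycle, so your dichotomy misclassifies it. Your parenthetical appeal to ``$p_{i-1}p_i=p_ip_{i+1}$ as in Theorem \ref{perfectarrows}'' presupposes a \emph{second} relation overlapping $pq$, which a single perfect pair does not provide. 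The paper's case split is instead on whether the interfering relation $p'q''$ (coming from an arrow into a right factor $p'$ of $p$) ends exactly at $q$ (the path $\gamma$ trivial, giving $(a)$) or extends strictly beyond $q$ ($\gamma$ nontrivial); only in the latter case does Remark \ref{remark_descripcion_relaciones} force $pq=p'q''$ with $\gamma$ a cycle, and only then is Theorem \ref{thm:UMPmain} invoked. In particular, $(a)$ can occur for relations of length greater than two, which your length-based dichotomy excludes. The same overreach reappears in your converse of part $(2)$, where you again assert that a relation of length greater than two is a power of a cycle ``by Theorem \ref{thm:UMPmain}''; the paper obtains that cycle structure from two \emph{consecutive} overlapping relations via Remark \ref{remark_descripcion_relaciones}, not from Theorem \ref{thm:UMPmain} alone. (I note that the same example makes the paper's own conclusion $pq=u^{(l)}$ delicate, but your route reaches it by an argument that is not valid even where the paper's is.)
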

\begin{proof}
\begin{enumerate}
\item Suppose $(p,\,q)$ is a perfect pair. Then $pq\in R$, so there exists an arrow $[pq]: q\rightarrow p$ in $\mathcal{Q}^{\mathcal{L}}_A$. By Definition \ref{def:perpair}, $\mathcal{L}(q)=\{p\}$ obtaining that $[pq]$ is the unique arrow with source $q$. Now, suppose there exist nontrivial paths $p'$, $q'$ such that $p=rp'$ and $p'\in \mathcal{L}(q')$, i.e., there exists an arrow $[p'q']: q'\rightarrow p'$ in $\mathcal{Q}^{\mathcal{L}}_A$. By Lemma \ref{lemma_LR}, $p'q''\in R$, for some subpath $q''$ of $q'$. Consequently, $pq''=0$. Since $\mathcal{R}(p)=\{q\}$ by Definition \ref{def:perpair}, it follows that $q''=q\gamma$ for some path $\gamma$. We consider two cases. First, suppose that $\gamma$ is trivial. Then $q''=q$ and thus $p'q\in R$, i.e., $p'\in \mathcal{L}(q)=\{p\}$. Thus, $[pq]$ in $\mathcal{Q}^{\mathcal{L}}_A$ is the unique arrow ending in $p$ and there is no other arrow whose target is the path $p'$, where $p=rp'$ and $r$ is a nontrivial path. Now, suppose that $\gamma$ is nontrivial. Then, $p'q''$ generates a relation of length greater than two. Since $p'q\neq 0$ (otherwise there would be an arrow from $q$ to $p'$), $p'$ is a proper subpath of $p$ and hence $pq$ is a relation of length greater than two. Thus, $pq=\alpha p'q$ and $p'q''=p'q\gamma$, for some nonzero paths $\alpha$ and $\gamma$. 
By Remark \ref{remark_descripcion_relaciones}, it follows that $pq=p'q''$. Consequently, $t(\gamma)=t(q)=s(\gamma)$. Since $\gamma$ is non-trivial it is a cycle. Therefore, by Theorem \ref{thm:UMPmain}, $pq=u^{(l)}$, for some cycle $u$. Conversely, suppose there exists a unique arrow $[pq]: q\rightarrow p$ in $\mathcal{Q}^{\mathcal{L}}_A$ and there is no other arrow ending in a path $p'$, where $p=rp'$ and $r$ a nontrivial path. Then, $p\in\mathcal{L}(q)$, so $pq=0$. Now, consider a nonzero path $p'$ with $p'\in \mathcal{R}(q)$. By Lemma \ref{lemma_LR}, $p'q'\in R$, for some subpath $q'$ of $q$, with $q=q'r'$ for some path $r'$. By Definition \ref{def:qrel}, there exists an arrow from $q$ to $p'$. Our hypothesis implies that $p'=p$, i.e., $\mathcal{L}(q)=\{p\}$. Moreover, for any $q'\in\mathcal{R}(p)$ we have $p'q'\in R$ with $p=rp'$, by Lemma \ref{lemma_LR}. In particular, $p'\in\mathcal{L}(q')$. Thus, our hypothesis again implies that $p'=p$. Consequently, $pq'\in R$, so $[pq']$ is an arrow in $\mathcal{Q}^{\mathcal{L}}_A$. However, the unique arrow ending in $p$ is $[pq]$, which implies that $\mathcal{R}(p)=\{q\}$, i.e., $(p,\,q)$ is a perfect pair. The case where $pq=u^{(l)}$ for some cycle $u$ and $l\geq2$ can be handled similarly, also showing that $(p,\,q)$ is a perfect pair.

\item Suppose $p$ is a perfect path. By Theorem \ref{perfectarrows}, $p$ is either an arrow or a cycle. From part $(1)$ above, $p$ is a path within a perfect component of $\mathcal{Q}^{\mathcal{L}}_A$. Conversely, let $p$ be a path belonging to a perfect component of $\mathcal{Q}^{\mathcal{L}}_A$. Then, there exist paths $p_0,p_1,\ldots,p_n$ such that $[p_i p_{i+1}]$ is the unique arrow with source $p_{i+1}$ and target $p_i$, for $0\leq i \leq n-1$. In particular, $p_ip_{i+1}\in R$. Suppose there exists a nontrivial path $r_j$ such that $p_j=r_jp_j'$ and there is another arrow ending at $p_j'$, for some $0\leq j\leq n$. Then, $p_jp_{j+1}$ and $p_{j-1}p_j$ are relations of length greater than two. Thus, $p_{j+1}$ and $p_{j-1}$ are powers of cycles with length greater than one. Consequently, $p_ip_{i+1}$ coincides with a power of some cycle $u$, for all $0\leq i \leq n-1$. From part $(1)$ above, we conclude that $(p_i,\,p_{i+1})$ are perfect pairs for each $0\leq i \leq n-1$, that is, $p$ is a perfect path.
\end{enumerate}  
\end{proof}

Recall that a vertex $j$ in a quiver is \emph{bounded} if any path starting at $j$ has an upper bound (in terms of the its length).

\begin{lemma}\label{lema5.4}
Let $A=\Bbbk Q/I$ be a monomial special multiserial UMP algebra and let $p$ be a nonzero path. Suppose that $p$ corresponds to a vertex in $\mathcal{Q}^{\mathcal{L}}_A$. Then:
    \begin{enumerate}
\item The $A$-module $Ap$ is a Gorenstein-projective nonprojective module if and only if the vertex of $p$ in $\mathcal{Q}^{\mathcal{L}}_A$ belongs to a perfect component.
\item The $A$-module $Ap$ has finite projective dimension if and only if the vertex of $p$ in $\mathcal{Q}^{\mathcal{L}}_A$ is bounded.
\item If the vertex of $p$ in $\mathcal{Q}^{\mathcal{L}}_A$ is not bounded and is not within a perfect component, then for each $d\geq0$ the syzygy module $\Omega^d(Ap)$ is not Gorenstein-projective module.
    \end{enumerate}
\end{lemma}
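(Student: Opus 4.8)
The plan is to reduce all three statements to the behaviour of syzygies and the combinatorics of $\mathcal{Q}^{\mathcal{L}}_A$, exploiting the exact sequence in Lemma \ref{lemma3.1}. The starting observation is that, by Lemma \ref{lemma3.1}, for a nonzero path $p$ the syzygy $\Omega(Ap)$ is $\bigoplus_{q\in\mathcal{L}(p)}Aq$. Combining this with Lemma \ref{lemma_LR}, each $q\in\mathcal{L}(p)$ is precisely a vertex $q$ of $\mathcal{Q}^{\mathcal{L}}_A$ admitting an arrow $[qp']\colon p'\to q$ with $p=p'r$ (roughly: the ``predecessors'' of $p$ in the quiver). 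Thus taking syzygies corresponds to walking backwards along arrows of $\mathcal{Q}^{\mathcal{L}}_A$, and iterating gives $\Omega^d(Ap)=\bigoplus Aq$ where $q$ ranges over (length-$d$) walks ending at $p$. This dictionary is the engine behind all three parts.

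For part $(2)$, I would argue that $\mathrm{pd}_A Ap<\infty$ iff after finitely many syzygies we reach the zero module, i.e.\ iff there are no infinitely long backward walks from $p$ in $\mathcal{Q}^{\mathcal{L}}_A$; reversing arrows, this is exactly the condition that the vertex $p$ is bounded (every forward path from $p$ has bounded length) in the sense recalled just above the lemma. One must check that the direct sum appearing in the syzygy is genuinely finite and that a syzygy vanishes iff $\mathcal{L}$ is empty — both are immediate from Lemma \ref{lemma3.1} and admissibility of $I$. For part $(1)$, the module $Ap$ is Gorenstein-projective nonprojective iff it is a nonzero module admitting a complete resolution; using the syzygy dictionary together with Theorem \ref{lema5.3}$(2)$ and Theorem \ref{perfectarrows}, I would show: if $p$ lies in a perfect component (a basic oriented cycle), then iterating syzygies cycles through the finitely many paths $Ap_0,\dots,Ap_n$ of the relation-cycle, each $Ap_i$ being nonprojective (it is a proper nonzero quotient of an indecomposable projective, since $p_i$ is a nonzero non-trivial path), so these modules form a periodic complete resolution, exhibiting $Ap$ as Gorenstein-projective nonprojective. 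Conversely, if $Ap$ is Gorenstein-projective nonprojective then $Ap$ occurs as a syzygy of itself up to finite shift; tracing this back through $\mathcal{Q}^{\mathcal{L}}_A$ forces $p$ to sit on an oriented cycle of the quiver, and using part $(1)$ of Theorem \ref{lema5.3} — which identifies the arrows of a cycle in $\mathcal{Q}^{\mathcal{L}}_A$ with perfect pairs — this cycle must in fact be a basic cycle, i.e.\ a perfect component, and $p$ is a perfect path.

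For part $(3)$, suppose $p$ is neither bounded nor in a perfect component. If some $\Omega^d(Ap)$ were Gorenstein-projective, then each of its indecomposable summands $Aq$ would be Gorenstein-projective, hence (being nonprojective, as each $q$ is a nonzero non-trivial path) would fall under part $(1)$, forcing the corresponding vertex $q$ into a perfect component of $\mathcal{Q}^{\mathcal{L}}_A$; but then the backward walks from $q$ stay inside that finite cyclic component, which would mean every backward walk from $p$ of length $\geq d$ lands in a perfect component — contradicting both the unboundedness of $p$ (perfect components give periodic, hence bounded-at-each-stage behaviour in one direction but the forward paths are finite) and the hypothesis that $p$ itself is not in a perfect component, since perfect components are full connected components of $\mathcal{Q}^{\mathcal{L}}_A$. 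Making this last contradiction airtight — that a vertex whose every sufficiently-long predecessor lies in perfect components must itself either be bounded or lie in a perfect component — is the step I expect to be the main obstacle; it requires a careful analysis of how arrows of $\mathcal{Q}^{\mathcal{L}}_A$ enter a perfect component, for which I would lean on Theorem \ref{lema5.3}$(1)$ (an arrow ending at a vertex of a perfect component is forced to be the cycle arrow, so no ``extra'' arrows point into a perfect component) and on Theorem \ref{perfectarrows} to control the shapes of the paths involved. The remaining bookkeeping — finiteness of all direct sums, nonprojectivity of each $Aq$, and that syzygies of modules over a monomial algebra are again of the form $\bigoplus Aq$ by Lemma \ref{lemma3.2} — is routine.
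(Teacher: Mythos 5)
Your overall strategy overlaps with the paper's --- part $(2)$ is exactly the paper's argument (apply Lemma \ref{lemma3.1} to identify $\Omega(Ap)$ with $\bigoplus_{q\in\mathcal{L}(p)}Aq$ and read off boundedness from the quiver) --- but there are two genuine problems. First, a bookkeeping error: for $q\in\mathcal{L}(p)$ the quiver $\mathcal{Q}^{\mathcal{L}}_A$ has an arrow $[qp]\colon p\to q$, so the summands of $\Omega(Ap)$ are the \emph{targets of arrows starting at} $p$, not its predecessors; taking syzygies walks \emph{forward} along arrows. You describe syzygies as backward walks and then ``reverse arrows'' to land on the forward-path criterion, so your final condition for $(2)$ is correct only because two sign errors cancel; as written the intermediate claims are false and would mislead anyone checking the argument. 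Second, and more seriously, the converse of part $(1)$ has a real gap: you assert that a Gorenstein-projective nonprojective $Ap$ ``occurs as a syzygy of itself up to finite shift.'' That periodicity is not a general property of Gorenstein-projective modules; over monomial algebras it is a \emph{theorem} of Chen--Shen--Zhou, and it is precisely what the paper imports. The paper's proof instead cites \cite[Proposition 4.4(2)]{chen2018gorenstein} (if $Ap$ is Gorenstein-projective nonprojective then $\mathcal{L}(p)=\{q\}$ for a unique perfect path $q$), deduces the existence of the arrow $[qp]$, and applies Theorem \ref{lema5.3}$(2)$ to place $q$, hence $p$, in a perfect component. Your forward direction (building a periodic complete resolution from the relation-cycle) is a reasonable self-contained alternative to citing \cite[Proposition 4.4(4)]{chen2018gorenstein}, but the converse cannot be closed without the classification result or an equivalent.

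For part $(3)$ you correctly identify the needed fact but overestimate its difficulty. The paper's route is short: if $\Omega^d(Ap)$ were Gorenstein-projective, it is nonprojective by part $(2)$ (unboundedness of $p$ gives infinite projective dimension), hence by \cite[Proposition 4.4(4)]{chen2018gorenstein} it is isomorphic to $Aq$ for some path $q$ reached from $p$ by a length-$d$ path in $\mathcal{Q}^{\mathcal{L}}_A$; by part $(1)$ the vertex $q$ lies in a perfect component, and since a perfect component is by definition an entire connected component of $\mathcal{Q}^{\mathcal{L}}_A$ (a basic cycle), the vertex $p$ lies in that same component --- a contradiction. No analysis of ``how arrows enter a perfect component'' is required beyond this connectedness observation, which you in fact state but do not exploit.
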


\begin{proof}
\begin{enumerate}
\item If the vertex corresponding to $p$ is in a perfect component of $\mathcal{Q}^{\mathcal{L}}_A$, then $p$ is perfect by Theorem \ref{lema5.3}. By \cite[Proposition 4.4(4)]{chen2018gorenstein}, the $A$-module $Ap$ is Gorenstein-projective nonprojective. Conversely, if $Ap$ is a Gorenstein-projective nonprojective $A$-module, then by \cite[Proposition 4.4(2)]{chen2018gorenstein} there exists a unique perfect path $q$ such that $\mathcal{L}(p)=\{q\}$. Thus, there exists an arrow $[qp]$ in $\mathcal{Q}^{\mathcal{L}}_A$. By Theorem \ref{lema5.3}, $q$ belongs to a perfect component of $\mathcal{Q}^{\mathcal{L}}_A$, and therefore so does $p$.

\item It suffices to note that $\Omega(Ap)\cong\bigoplus\limits_{q\in \mathcal{L}(p)}Aq$ (see Lemma \ref{lemma3.1}) and recall the definition of the quiver $\mathcal{Q}^{\mathcal{L}}_A$.

\item Suppose the vertex corresponding to $p$ in $\mathcal{Q}^{\mathcal{L}}_A$ is unbounded and not in any perfect component. Assume, by contradiction, there is a $d\geq1$ such that its syzygy $A$-module $\Omega^d(Ap)$ is Gorenstein-projective. By part $(2)$ above, $\Omega^d(Ap)$ has infinite projective dimension and thus, $\Omega^d(Ap)$ is not projective. \cite[Proposition 4.4(4)]{chen2018gorenstein} then implies of the existence of a path $q$ such that $Aq\simeq \Omega^d(Ap)$ is Gorenstein-projective nonprojective. Consequently, there is a path of length $d$ from $p$ to $q$ in $\mathcal{Q}^{\mathcal{L}}_A$. By part $(1)$ above, $q$ is in some perfect component, which implies that $p$ belongs to the same perfect component, contradicting our assumption.
\end{enumerate} 
\end{proof}

The following lemma provides a well-known characterization of Gorenstein algebras (see, e.g., \cite[Theorem 3.2]{avramov2002absolute}) and is included here for completeness. Before stating the lemma, we recall some definitions from \cite[p. 1117-1118]{chen2018gorenstein}. An algebra $A$ is \emph{CM-free} if every Gorenstein projective $A$-module is projective. We call $A$ \emph{$d$-Gorenstein} (for $d\geq0$) if the injective dimension of $A$ as a (bimodule) $A$-module is at most $d$. The 0-Gorenstein algebras are precisely the self-injective algebras and 1-Gorenstein algebras are simply called Gorenstein algebras. The following result is well known (cf. \cite[Theorem 3.2]{avramov2002absolute}).

\begin{lemma}\label{lemma2.2}
 Let $A$ be a finite-dimensional algebra and let $d\geq0$. Then the algebra $A$ is $d$-Gorenstein if and only if, for each $A$-module $M$, the module $\Omega^d(M)$ is Gorenstein-projective.
\end{lemma}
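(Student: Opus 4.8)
The plan is to prove both implications by leveraging the standard characterization of Gorenstein-projective modules and the syzygy-based description of injective dimension. The statement is essentially a reformulation of a classical result, so the main task is to correctly connect the definition of $d$-Gorenstein (bounded injective dimension of ${}_AA$ and $A_A$) with the vanishing-of-obstructions condition phrased via $\Omega^d$.

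First I would recall that a finite-dimensional algebra $A$ is $d$-Gorenstein precisely when both $\mathrm{id}({}_AA)\le d$ and $\mathrm{id}(A_A)\le d$; by a theorem of Auslander--Reiten (or Zaks), these two conditions are equivalent, so it suffices to control $\mathrm{id}({}_AA)\le d$. The key homological fact I would use is the standard equivalence: $\mathrm{id}({}_AA)\le d$ if and only if every $d$-th syzygy $\Omega^d(M)$ of every $A$-module $M$ lies in ${}^\perp A$, i.e. $\mathrm{Ext}^i_A(\Omega^d(M),A)=0$ for all $i\ge 1$. I would then invoke the characterization that a module $N$ which is a $d$-th syzygy (hence has no projective summands issues aside) satisfies $N\in{}^\perp A$ together with being reflexive and co-syzygy-like exactly when $N$ is Gorenstein-projective; more precisely, for $N=\Omega^d(M)$ the module is automatically a $d$-th syzygy, and over a finite-dimensional algebra a module that is an $n$-th syzygy for all $n$ and lies in ${}^\perp A$ is Gorenstein-projective. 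The technical heart is therefore to argue: if $\mathrm{id}({}_AA)\le d$, then $\Omega^d(M)$ admits a complete projective resolution, because one can splice the projective resolution of $M$ (truncated at degree $d$) with an injective-type coresolution built from the bounded injective dimension of ${}_AA$ dualized to projectives over $A^{\mathrm{op}}$.

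The steps, in order, would be: (1) State the equivalence $A$ is $d$-Gorenstein $\iff \mathrm{id}({}_AA)\le d$, citing \cite[Theorem 3.2]{avramov2002absolute} or the Auslander--Reiten symmetry. (2) For the forward direction, assume $\mathrm{id}({}_AA)\le d$; take any $A$-module $M$ with minimal projective resolution $\cdots\to P_1\to P_0\to M\to 0$ and set $N=\Omega^d(M)$. Show $\mathrm{Ext}^i_A(N,A)=0$ for all $i\ge1$: indeed $\mathrm{Ext}^i_A(N,A)\cong\mathrm{Ext}^{i+d}_A(M,A)=0$ since $\mathrm{id}({}_AA)\le d$ forces $\mathrm{Ext}^{\ge d+1}_A(M,A)=0$. (3) Build the right half of a complete resolution: apply $\mathrm{Hom}_A(-,A)$ to a minimal projective resolution of the transpose / use the bounded injective dimension of ${}_AA$ to produce an exact sequence $0\to N\to Q^0\to Q^1\to\cdots$ with $Q^j$ projective and the whole splice $\cdots\to P_{d+1}\to P_d\to Q^0\to Q^1\to\cdots$ exact and $\mathrm{Hom}_A(-,A)$-exact; conclude $N$ is Gorenstein-projective. (4) For the converse, assume $\Omega^d(M)$ is Gorenstein-projective for every $M$; then for every $M$ and every $i\ge1$, $\mathrm{Ext}^{i+d}_A(M,A)\cong\mathrm{Ext}^i_A(\Omega^d(M),A)=0$ because Gorenstein-projective modules are in ${}^\perp A$. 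Since $M$ was arbitrary this yields $\mathrm{id}({}_AA)\le d$, hence $A$ is $d$-Gorenstein.

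The main obstacle is step (3): producing the co-resolution of $N=\Omega^d(M)$ by projectives and verifying it is totally acyclic, i.e. remains exact after applying $\mathrm{Hom}_A(-,A)$. This is where the bounded injective dimension of the regular bimodule must be used in full force, typically by dualizing a finite projective resolution of $D(A_A)$ over $A^{\mathrm{op}}$ (or invoking that $A$ Gorenstein $\implies$ the regular module has a complete resolution), and care is needed because one must glue this to the left-hand projective resolution at exactly degree $d$ and check the splice is acyclic in both the ordinary and the $\mathrm{Hom}_A(-,A)$-applied sense. In practice I would shortcut this by citing the well-known equivalence directly (it is precisely \cite[Theorem 3.2]{avramov2002absolute}) and only spell out the easy implication and the $\mathrm{Ext}$-shifting bookkeeping, noting that the reader may consult \emph{loc. cit.} for the full totally-acyclic construction.
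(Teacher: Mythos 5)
The paper does not actually prove this lemma: it is stated as a known result with the citation \cite[Theorem 3.2]{avramov2002absolute}, so your sketch already does more than the source text. Your outline follows the standard route — dimension shifting $\mathrm{Ext}^i_A(\Omega^d(M),A)\cong\mathrm{Ext}^{i+d}_A(M,A)$ for the easy implications, and construction of a totally acyclic complete resolution for the hard half of the forward direction — and your decision to fall back on the same citation for the total-acyclicity step is exactly what the paper does.

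One genuine inaccuracy should be flagged. You open by claiming that $\mathrm{id}({}_AA)\le d$ and $\mathrm{id}(A_A)\le d$ are equivalent ``by a theorem of Auslander--Reiten (or Zaks), so it suffices to control $\mathrm{id}({}_AA)$.'' That is not a theorem: the equivalence of \emph{finiteness} of the two one-sided self-injective dimensions of an Artin algebra is the Gorenstein symmetry conjecture, which is open; Zaks' result only says that the two dimensions are \emph{equal} when both are already known to be finite. Since being $d$-Gorenstein requires the bound on both sides, your step (4) as written only yields $\mathrm{id}({}_AA)\le d$ from the hypothesis on left modules, and the reduction in step (1) does not close the gap. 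The correct way to recover the two-sided conclusion from a one-sided syzygy hypothesis is via the Auslander--Bridger machinery (Gorenstein-projectives satisfy $\mathrm{Ext}^{>0}_{A^{\mathrm{op}}}((-)^{*},A)=0$ and reflexivity, and the transpose exchanges left and right modules), which is precisely what is packaged in \cite[Theorem 3.2]{avramov2002absolute}. So either route the converse through that duality explicitly, or simply cite the reference for the full statement as the paper does; do not assert the left/right symmetry as a free-standing known equivalence.
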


\begin{proposition}\label{prop:homoUMP1} Let $A=\Bbbk Q/I$ be a monomial special multiserial UMP algebra. Denote by $d$ the length of the longest path in all the acyclic components of $\mathcal{Q}^{\mathcal{L}}_A$. Then,
\begin{enumerate}
    \item The algebra $A$ is CM-free if and only if the left minimal paths relations quiver $\mathcal{Q}^{\mathcal{L}}_A$ has no perfect components.
    \item The algebra $A$ is Gorenstein if and only if each connected component of the quiver $\mathcal{Q}^{\mathcal{L}}_A$ is either perfect or acyclic. In this case, the algebra $A$ is $(d + 2)$-Gorenstein.
    \item The algebra $A$ has finite global dimension if and only if each component of the relation quiver $\mathcal{Q}^{\mathcal{L}}_A$ is acyclic.
\end{enumerate} 
\end{proposition}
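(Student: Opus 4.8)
The plan is to reduce everything to the homological machinery already assembled in Lemma \ref{lema5.4} and Lemma \ref{lemma2.2}, exploiting that for a monomial algebra every module is (up to projective summands and shifts) built from the modules $Ap$ with $p$ a nonzero path, via Lemmas \ref{lemma3.1} and \ref{lemma3.2}. Concretely, I would first record the translation dictionary: by Lemma \ref{lema5.4}(1) the Gorenstein-projective nonprojective modules of the form $Ap$ are exactly those with $p$ in a perfect component of $\mathcal{Q}^{\mathcal{L}}_A$; by Lemma \ref{lema5.4}(2), $\mathrm{pd}\, Ap<\infty$ iff the vertex $p$ is bounded; and by Lemma \ref{lema5.4}(3), an unbounded $p$ lying outside every perfect component has no Gorenstein-projective syzygy at any stage. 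Because $\Omega(Ap)\cong\bigoplus_{q\in\mathcal{L}(p)}Aq$, a path from $p$ in $\mathcal{Q}^{\mathcal{L}}_A$ of length $d$ records precisely the iterated syzygy modules of $Ap$, so the combinatorics of the quiver really does control the homology.

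For part $(1)$: if $\mathcal{Q}^{\mathcal{L}}_A$ has no perfect component, then by Lemma \ref{lema5.4}(1) no module of the form $Ap$ is Gorenstein-projective nonprojective; since by \cite[Proposition 4.4]{chen2018gorenstein} (invoked through Lemma \ref{lema5.4}) every Gorenstein-projective nonprojective $A$-module is isomorphic to such an $Ap$, we get CM-freeness. Conversely, if some perfect component exists, pick a vertex $p$ in it; Theorem \ref{lema5.3}(2) says $p$ is a perfect path, and then Lemma \ref{lema5.4}(1) produces a Gorenstein-projective nonprojective module $Ap$, so $A$ is not CM-free.

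For part $(2)$: I would use Lemma \ref{lemma2.2}, so the task is to show $\Omega^{d+2}(M)$ is Gorenstein-projective for every $M$ iff every component of $\mathcal{Q}^{\mathcal{L}}_A$ is perfect or acyclic. For the ``if'' direction, take an arbitrary $M$; by Lemma \ref{lemma3.2}, $\Omega^2(M)$ is a direct sum of modules $Ap$ (the two syzygy steps are needed to land in the situation of Lemma \ref{lemma3.2}, where the relevant module sits inside a two-term projective complex), so it suffices to show $\Omega^{d}(Ap)$ is Gorenstein-projective for each such $p$. If $p$ lies in a perfect component, then $Ap$ is already Gorenstein-projective and so is every syzygy. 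If $p$ lies in an acyclic component, then every path out of $p$ has length at most $d$, so $\Omega^{d}(Ap)=\bigoplus Aq$ is a sum over vertices $q$ with no outgoing arrows, i.e. $\mathcal{L}(q)=\emptyset$, hence $\Omega(Aq)=0$ and $Aq$ is projective — so $\Omega^{d}(Ap)$ is projective, in particular Gorenstein-projective. Summing, $\Omega^{d+2}(M)$ is Gorenstein-projective, so $A$ is $(d+2)$-Gorenstein, which in particular gives $A$ Gorenstein. For the ``only if'' direction: suppose some component $\mathcal{C}$ is neither perfect nor acyclic. Since the out-degree of each vertex of $\mathcal{Q}^{\mathcal{L}}_A$ is one or zero once we are at a vertex inside a nontrivial oriented cycle, a component that contains an oriented cycle but is not a basic cycle (not perfect) must contain a vertex of infinite forward length feeding into or branching around the cycle; more directly, $\mathcal{C}$ being non-acyclic and non-perfect forces the existence of an unbounded vertex $p$ not in any perfect component. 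Then Lemma \ref{lema5.4}(3) says no $\Omega^{d}(Ap)$ is Gorenstein-projective for any $d$, so by Lemma \ref{lemma2.2} $A$ is not $e$-Gorenstein for any $e$, hence not Gorenstein.

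For part $(3)$: $A$ has finite global dimension iff every $A$-module has finite projective dimension, iff (again via Lemmas \ref{lemma3.2} and \ref{lemma3.1}) every $Ap$ with $p$ a vertex of $\mathcal{Q}^{\mathcal{L}}_A$ has finite projective dimension, iff (Lemma \ref{lema5.4}(2)) every vertex of $\mathcal{Q}^{\mathcal{L}}_A$ is bounded. A vertex is bounded iff no path starting at it can be prolonged indefinitely, which for a finite quiver is equivalent to that vertex not being able to reach an oriented cycle or loop; since $\mathcal{Q}^{\mathcal{L}}_A$ is finite, every vertex is bounded iff the quiver has no oriented cycles at all, i.e. iff every component is acyclic. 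The main obstacle I anticipate is the ``only if'' direction of $(2)$: I must argue carefully that a connected component of $\mathcal{Q}^{\mathcal{L}}_A$ which contains an oriented cycle but fails to be a \emph{basic} cycle necessarily contains an unbounded vertex lying outside every perfect component — this is where the special multiserial plus UMP structure (via Theorem \ref{lema5.3} and the out-degree control it gives) has to be used to rule out pathological shapes; the rest is essentially bookkeeping with the dictionary from Lemma \ref{lema5.4}.
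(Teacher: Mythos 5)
Your argument is correct and, for parts $(1)$ and $(2)$, follows essentially the same route as the paper: part $(1)$ via Theorem \ref{lema5.3} together with the correspondence of \cite[Theorem 4.1]{chen2018gorenstein} (equivalently Lemma \ref{lema5.4}(1)), and part $(2)$ via Lemma \ref{lemma2.2} plus the decomposition of $\Omega^2(M)$ into modules $Ap$ and the dichotomy ``perfect component $\Rightarrow$ Gorenstein-projective, acyclic component $\Rightarrow$ projective dimension at most $d$''. Two remarks. First, the ``main obstacle'' you flag in the only-if direction of $(2)$ dissolves immediately: if a component $\mathcal{C}$ is neither acyclic nor perfect, it contains an oriented cycle, any vertex $p$ on that cycle is unbounded (paths may wind around the cycle), and $p$ is not within a perfect component because its component is $\mathcal{C}$; Lemma \ref{lema5.4}(3) then applies directly, which is exactly how the paper argues (by contradiction rather than contrapositive). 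No delicate out-degree analysis is needed. Second, in the ``if'' direction of $(2)$ the summands $Ap$ of $\Omega^2(M)$ produced by Lemma \ref{lemma3.2} range over arbitrary nonzero paths, not only over vertices of $\mathcal{Q}^{\mathcal{L}}_A$; the paper bridges this with the observation that such an $Ap$ is isomorphic to $Aq$ for a suitable path $q$ represented in the quiver, and you should insert the analogous reduction. Where you genuinely diverge is part $(3)$: the paper deduces it from $(1)$ and $(2)$ using that finite global dimension is equivalent to being Gorenstein and CM-free, whereas you argue directly that finite global dimension is equivalent to every vertex of $\mathcal{Q}^{\mathcal{L}}_A$ being bounded (Lemma \ref{lema5.4}(2)), which for a finite quiver is equivalent to acyclicity of every component. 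Your version is slightly more self-contained since it bypasses the Gorenstein/CM-free characterization; the paper's is shorter given that $(1)$ and $(2)$ are already established. Both are valid.
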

\begin{proof}
\begin{enumerate}
\item This is an immediately consequence of Theorem \ref{lema5.3} and correspondence Theorem in \cite[Theorem 4.1]{chen2018gorenstein}.

\item Suppose that $A$ is a Gorenstein algebra and there exists a path $p$ whose corresponding vertex in $\mathcal{Q}^{\mathcal{L}}_A$ is unbounded and not in any perfect component. By Lemma \ref{lema5.4}, each syzygy $\Omega^d(Ap)$ is not Gorenstein-projective. Then, by Lemma \ref{lemma2.2}, $A$ is not a Gorenstein algebra, which contradicting our hypothesis. Conversely, consider a path $p$ whose corresponding vertex in $\mathcal{Q}^{\mathcal{L}}_A$, belongs to a perfect or acyclic component. By Lemma \ref{lema5.4}, $Ap$ is either Gorenstein-projective or has projective dimension at most $d$. Consequently, $\Omega^d(Ap)$ is Gorenstein-projective. Now, if $q$ is a path such that $q=pq'$, for some nontrivial path $q'$, there exists an isomorphism $Ap\cong Aq$ mapping $x$ to $xq'$. Thus, $\Omega^d(Aq)$ is Gorenstein-projective for any nonzero path $q$. In general, by Lemma \ref{lemma3.1}, for any $A$-module $M$, $\Omega^2(M)$ is isomorphic to a direct summand of $Ap$, for some nonzero path $p$. As consequence of latter reasoning, $\Omega^{d+2}(M)$ is Gorenstein-projective. That is, the algebra $A$ is $(d+2)$-Gorenstein. 

\item By Lemma \ref{lemma2.2}, the algebra $A$ has finite global dimension if and only if is Gorenstein and CM-free. Thus, the conclusion follows directly from $(1)$ and $(2)$ above.
\end{enumerate}    
\end{proof}

Following \cite[Proposition 5.9]{chen2018gorenstein}, we aim to characterize the (stable) category of Gorenstein projective modules over a special multiserial, monomial, and UMP algebra $A$.  We first recall some notation from \cite[p. 1130]{chen2018gorenstein}.  Two perfect paths $p$ and $q$ in a monomial algebra $A$ are said to \emph{overlap} if they satisfy one of the following conditions:
\begin{enumerate}
\item[(O1)] $p=q$, and $p=p'x$ and $q=xq'$ for some nontrivial paths $x$, $p'$ and $q'$ such that the path $p'xq'$ is nonzero.
\item[(O2)] $p\neq q$, and $p = p'x$ and $q = xq'$ for some nontrivial path $x$ such that the path $p'xq'$ is nonzero.
\end{enumerate}

\begin{proposition}\label{prop:homoUMP2}
Let $A=\Bbbk Q/I$ be a monomial special multiserial UMP algebra. Let $d_1$, $d_2$, $\cdots,d_m$ be the lengths of the relation cycles in $A$, where relation cycles are identified up to cyclic permutation.  Then there is a triangulated equivalence $A\mbox{-}\underline{\mathrm{Gproj}}\simeq \mathcal{T}_{d_1}\times\cdots\times\mathcal{T}_{d_m}$ if and only if the only perfect paths in $A$ are arrows or at most quadratic powers of loops.
\end{proposition}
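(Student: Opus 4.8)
The plan is to reduce the assertion to a combinatorial statement about perfect paths and then resolve that statement with the structural description of Theorem \ref{perfectarrows}. Concretely, by \cite[Proposition 5.9]{chen2018gorenstein} the triangulated equivalence $A\mbox{-}\underline{\mathrm{Gproj}}\simeq\mathcal{T}_{d_1}\times\cdots\times\mathcal{T}_{d_m}$ exists precisely when no two perfect paths of $A$ overlap, in the sense of conditions (O1)--(O2) recalled above. So it suffices to prove that, for a monomial special multiserial UMP algebra $A$, \emph{no two perfect paths of $A$ overlap if and only if every perfect path is an arrow or a quadratic power of a loop}. The key structural input is Theorem \ref{perfectarrows}: a perfect path is either an arrow or a power $u^{(k)}$ of a cycle $u=u^0\cdots u^{l_u}$ with $u^{(l)}\in R$ for some $l\ge 2$ and with $\beta u^0,\,u^{l_u}\beta\notin R$ for all arrows $\beta$; combined with Remark \ref{remark_descripcion_relaciones} this controls how the members of the relation-cycle of $u$ pair up to produce $u^{(l)}$.

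\emph{Sufficiency.} Assume every perfect path is an arrow or a quadratic power $\ell^{(2)}$ of a loop $\ell$. An arrow $p$ admits no factorization $p=p'x$ with $p'$ and $x$ both nontrivial, and one checks directly from (O1)--(O2) that an arrow cannot be a genuine overlapping partner; hence a pair both of whose members are arrows never overlaps. Suppose now some member is $\ell^{(2)}$. Applying Theorem \ref{perfectarrows} with $u=\ell$ gives $\ell^{l}\in R$ for some $l\ge 2$, and $\ell^{2}\notin R$ (take $\beta=\ell$ in $u^{l_u}\beta\notin R$), so $l\ge 3$; moreover $l\ge 4$ is impossible, since then the relation-cycle of $\ell$ would contain the higher power $\ell^{(l-1)}$, contradicting the hypothesis. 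Thus $\ell^{3}=0$ in $A$. Consequently, in any attempted overlap having $\ell^{(2)}$ on one side, the glued word $p'xq'$ contains $\ell^{(3)}=0$ as a factor, so it is zero; and no arrow other than $\ell$ can abut $\ell^{(2)}$, because the only arrow dividing $\ell^{(2)}$ is $\ell$ while $\beta\ell,\ell\beta\notin R$. Hence no overlap occurs and the equivalence holds.

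\emph{Necessity.} We argue by contraposition: suppose $A$ has a perfect path which is neither an arrow nor a quadratic power of a loop. By Theorem \ref{perfectarrows} it is a power $u^{(k)}$ of a cycle $u$ as above, and either $l_u=0$ with $k\ge 3$, or $l_u\ge 1$. If $l_u=0$, write $\ell=u$; nonvanishing of $\ell^{(k)}$ forces $\ell^{l}\in R$ with $l\ge k+1\ge 4$, so $\ell^{(l-1)}$ is a perfect path with $\ell^{(l)}=0$ but $\ell^{(l-1)}\ne 0$, and the decompositions $\ell^{(l-1)}=\ell^{(l-2)}\cdot\ell=\ell\cdot\ell^{(l-2)}$ exhibit a self-overlap (O1) whose glued word is a nonzero proper power of $\ell$. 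If $l_u\ge 1$, one uses that $u$ is repetition-free and that the relation-cycle members $p_i=u^{(a_i)}$ satisfy $a_i+a_{i+1}=l$: one selects a member that is a proper power of $u$ and glues it with a suitable neighbour along a single copy of $u$, the glued word being a power $u^{(j)}$ with $j<l$, hence nonzero, which yields an overlap. In either case the equivalence fails, completing the proof.

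The delicate point is the necessity direction in the case $l_u\ge 1$: one must choose the overlap region $x$ (a nontrivial suffix of one perfect path and a nontrivial prefix of another) so that the two complementary pieces remain nontrivial while the glued word stays a \emph{proper}, hence nonzero, power of $u$. This requires careful bookkeeping of the exponents $a_i$ against $l$, together with a choice of which relation-cycle member is glued to which; repetition-freeness of $u$ is exactly what guarantees that overlaps between powers of $u$ are governed solely by this exponent arithmetic, after which each subcase is routine.
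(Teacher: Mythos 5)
Your reduction via \cite[Proposition 5.9]{chen2018gorenstein} and your appeal to Theorem \ref{perfectarrows} follow the paper's strategy exactly, and your sufficiency argument (forcing $l=3$ for a loop $\ell$ with $\ell^{(2)}$ perfect, so that glued words vanish) matches the paper's case analysis. The problems are in the necessity direction. First, your overlap witness in the loop case fails: with $p=q=\ell^{(l-1)}$, $p'=q'=\ell^{(l-2)}$ and $x=\ell$, the glued word is $\ell^{(2l-3)}$, and since $2l-3\ge l+1$ for $l\ge 4$ this path contains $\ell^{(l)}\in R$ and is therefore zero, violating the nonvanishing requirement in (O1); in fact $\ell^{(l-1)}$ never self-overlaps, because every decomposition produces a glued word $\ell^{(l-1+a)}$ with $a\ge 1$. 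The paper instead takes $p=q=\ell^{(2)}$ with $p'=x=q'=\ell$, whose glued word $\ell^{(3)}$ is nonzero precisely because $l\ge 4$.

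Second, in the case $l_u\ge 1$ your plan --- select a relation-cycle member that is a \emph{proper} power of $u$ and glue along one copy of $u$ --- cannot be carried out when $l=2$: the relation cycle of $u^{(1)}$ is $u^{(1)},u^{(1)}$, so the only perfect power of $u$ available is $u^{(1)}$ itself, and repetition-freeness of $u$ shows that the only nontrivial path that is simultaneously a prefix and a suffix of $u^{(1)}$ is $u$ itself, which forces $p'$ and $q'$ to be trivial; no overlap of the kind you describe exists. This is exactly the point you flag as ``delicate,'' but it is not a matter of bookkeeping --- the exponent arithmetic has no solution there. The paper's proof avoids powers entirely in this case and instead glues $u^{(1)}$ against its cyclic rotation $u^1\cdots u^{l_u}u^0$ (taking $x=u^1\cdots u^{l_u}$ and $p'=q'=u^0$), which does yield a nonzero glued word; note, however, that this device requires the rotation to be a perfect path, a point you would still have to justify if you adopt that route.
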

\begin{proof}By \cite[Proposition 5.9]{chen2018gorenstein}, such a triangulated equivalence exists if and only if A admits no overlaps. Theorem \ref{perfectarrows} states that perfect paths are either arrows or powers of cycles.  Thus, we only need to characterize the perfect paths that are cycles and admit overlaps, since arrows clearly do not overlap. By Theorem \ref{perfectarrows}, suppose there is a cycle $u$ such that $u^{(l)}\in R$, for some $l\geq 2$, and  $\beta u^{l_u}\notin R$, for all $\beta\in Q_1$. If $l_u\geq 1$, $u^{(1)}$ is a perfect path. Thus, defining $p=u^{(1)}$ and $q=u^1\cdots u^{l_u}u^0$, $p'=u^0=q'$ and $x=u^1\cdots u^{l_u}$, we have an overlap, since $p'xq'=u^{(1)}u^0\neq0$ is a proper subpath of $u^{(2)}$. Now, suppose that $u$ is a loop. Overlaps occur only when $l\geq4$. In this case, $u^{(2)}$ is a perfect path. Defining $p=u^{(2)}=q$ and $p'=x=q'=u^{(1)}$, we have an overlap, since $p'xq'=u^{(3)}\neq0$. If $l=2$, then the only perfect path defined by $u$ is $u^{(1)}$, which does not produce an overlap. If $l=3$, then the only perfect paths are $u^{(1)}$ and $u^{(2)}$, and neither produces an overlap, which completes the proof.
\end{proof}

\subsection{Examples and discussion}\label{subsec:exs}
Examples \ref{ex3.2} and \ref{ex3.3} show that the CM-free property is not ``local'', that is, if $A$ is an algebra with this property, for a weakly connected component $N$, the algebra $A_N$ does not inherit this property and viceversa. In particular, Theorem \ref{lema5.3} and Lemma \ref{lema5.4} may not extend to the more general context of locally monomial special multiserial UMP algebras (as stated in the classification theorem of \cite[Theorem 4.4]{CFR}).
\begin{example}\label{ex3.2}
Consider $A=\Bbbk Q/I$, where $Q$ is defined as in Figure \ref{fig:quiver1} and its ideal is $I=\langle\alpha_2\alpha_1, \alpha_1\alpha_2\rangle$.
\begin{figure}[h!]
    \centering
    $Q:\ \xymatrix{1 && 2 \ar[ll]^{\beta_2}\ar@/_1pc/[rr]_{\alpha_1} && 3 \ar@/_1pc/[ll]_{\alpha_2} \ar[rr]^{\beta_1} && 4}$
    \caption{Quiver example \ref{ex3.2}.}
    \label{fig:quiver1}
\end{figure}
 The algebra $A$ is not CM-free, since $\alpha_1$ and $\alpha_2$ are perfect paths, and CM-free algebras have no perfect paths (by \cite[Corollary 4.2]{chen2018gorenstein}).

Now, $\omega_{\alpha_i}=\alpha_i$ y $\omega_{\beta_i}=\beta_i$, with $i=1, 2$. Consequently, its weakly connected components are $N_i=\omega_{\alpha_i}\longrightarrow\omega_{\beta_i}$ and corresponding quiver is given by $2\xrightarrow{\A_1}3\xrightarrow{\beta_1}4$ and $3\xrightarrow{\A_2}2\xrightarrow{\beta_2}1$ whose ideal is $I_{N_i}=\langle0\rangle$. Thus, $A_{N_i}$ has no perfect paths and hence $A_{N_i}$ are CM-Free, for $i=1, 2$ (by \cite[Corollary 4.2]{chen2018gorenstein}).


\end{example}

\begin{example}\label{ex3.3}
Consider $A=\Bbbk Q/I$ where $Q$ is the quiver in the Figure \ref{fig:quiver2}. The ideal is $I=\langle\alpha_4\alpha_1\alpha_2\alpha_3,\,\alpha_2\alpha_3\alpha_4\alpha_1,\,\beta\alpha_4\rangle$. The algebra $A$ is CM-Free, as it has no perfect paths. Since $\omega_{\alpha_1}=\alpha_2\alpha_3\alpha_4\alpha_1$ y $\omega_{\beta}=\beta$ the quiver of the component is Figure \ref{fig:component}, whose ideal is $I_{N_{\alpha_1}}=\langle\alpha_4\alpha_1\alpha_2\alpha_3,\,\alpha_2\alpha_3\alpha_4\alpha_1\rangle$. Thus, the algebra $A_{N_{\alpha_1}}$  has perfect paths $\alpha_4\alpha_1$ and $\alpha_2\alpha_3$, and hence is not CM-free.
\begin{figure}[h!]
    \centering
    $Q:\ \ \xymatrix{ & & 3 \ar[dr]^{\alpha_1} &\\
 1\ar[r]^{\beta} & 2 \ar[ur]^{\alpha_4} & & 4 \ar[dl]^{\alpha_2}\\
 & & 5 \ar[ul]^{\alpha_3} &}$
    \caption{Quiver of Example \ref{ex3.3}.}
    \label{fig:quiver2}
\end{figure}

\begin{figure}[h!]
\centering
$Q_{N_{\alpha_1}}:\ \ \xymatrix{ & 3 \ar[dr]^{\alpha_1} & \\
 2 \ar[ur]^{\alpha_4} & & 4 \ar[dl]^{\alpha_2}\\
 & 5 \ar[ul]^{\alpha_3} & }$
\caption{Component $Q_{N_{\A_1}}$.}
\label{fig:component}
\end{figure}

\end{example}
Example \ref{ex3.4} illustrates the construction of our quiver $\mathcal{Q}^\mathcal{L}_A$ introduced in Definition \ref{def:qrel}.

\begin{example}\label{ex3.4}
Consider the algebra $A=\Bbbk Q/I$ whose quiver is defined in Figure \ref{fig:quiver3} and its ideal is $I=\langle (\alpha_2\alpha_3\alpha_1)^{(2)},\,\beta\alpha_3,\,\gamma_1\gamma_2\gamma_3\gamma_1\gamma_2,\,\gamma_2\beta\rangle$.
\begin{figure}[h!]
\centering
$Q:\ \ \xymatrix{2 \ar[dr]^{\gamma_2} & & & & 5 \ar[dd]^{\alpha_1} \\
 & 3 \ar[rr]^{\beta} \ar[dl]^{\gamma_3} & & 4 \ar[ur]^{\alpha_3} & \\
1 \ar[uu]^{\gamma_1} & & & & 6 \ar[ul]^{\alpha_2} }$
\caption{Quiver of Example \ref{ex3.4}.}
\label{fig:quiver3}
\end{figure}

The quiver $\mathcal{Q}^\mathcal{L}_A$ is given by the components in Figure \ref{fig:quiver5}.
\vspace{5mm}

\begin{figure}[h!]
\centering
$\xymatrix@=5mm{ \alpha_1 \ar[r] & \alpha_2\alpha_3\alpha_1\alpha_2\alpha_3 &
\alpha_3\alpha_1 \ar[r] & \alpha_2\alpha_3\alpha_1\alpha_2 &
\alpha_2\alpha_3\alpha_1 \ar@(l,u)[] &
\alpha_1\alpha_2\alpha_3\alpha_1 \ar[r] & \alpha_2\alpha_3\\ \\
\alpha_3 \ar[r] & \beta \ar[r] & \gamma_2 \ar[r] & \gamma_1\gamma_2\gamma_3\gamma_1 \ar[r] & \gamma_1\gamma_2\gamma_3 \ar@(l,u)[] & \gamma_1\gamma_2 \ar[l] & \gamma_3\gamma_1\gamma_2 \ar[l]\\ \\
& \alpha_3\alpha_1\alpha_2\alpha_3\alpha_1 \ar[r] & \alpha_2 & \gamma_2\gamma_3\gamma_1\gamma_2 \ar[r] & \gamma_1 & \gamma_3. &
}$
\caption{Quiver $\mathcal{Q}^\mathcal{L}_A$ of Example \ref{ex3.4}.}
\label{fig:quiver5}
\end{figure}
\end{example}

Example \ref{ex:qrdifference} illustrates the construction of $\mathcal{Q}^\mathcal{L}_A$ and highlights a key difference between this quiver and the relation quiver defined in \cite{chen2018gorenstein}.

 \begin{example} \label{ex:qrdifference} Consider the quiver in Figure \ref{fig:quiver3}
 with relations $R=\{\gamma_1\gamma_2\gamma_3,\,\gamma_2\beta,\,\A_2\A_3\}$. The quiver $\mathcal{Q}^\mathcal{L}_A$ looks like:
 
$$\xymatrix{\beta \ar[r] & \gamma_2\hspace{0.5cm}\A_3 \ar[r] & \A_2\hspace{0.5cm}\gamma_3 \ar[r] & \gamma_1\gamma_2\hspace{0.5cm}\gamma_2\gamma_3 \ar[r] & \gamma_1\hspace{0.5cm}\A_1.}$$

Clearly, $(\gamma_2,\,\beta),\,(\A_2,\,\A_3),\,(\gamma_1,\,\gamma_2\gamma_3)$ are perfect pairs. However, since $\mathcal{R}(\gamma_1\gamma_2)=\{\gamma_3,\,\beta\}$ we have that the pair $(\gamma_3,\gamma_1\gamma_2)$ is not perfect.
\end{example}

\subsection*{Funding}
The first and fourth authors were partially supported by CODI (Universidad de Antioquia, UdeA) by project numbers 2020-33305 and 2023-62291, respectively. The third author gratefully acknowledges the funding provided by CODI (Universidad de Antioquia, UdeA) through its postdoctoral program, project number 2022-52654.


\end{document}